\documentclass[12pt]{amsart}
\usepackage{amsmath}
\usepackage{amsfonts}
\usepackage{amssymb}
\usepackage{bbding}
\usepackage{stmaryrd}
\usepackage{txfonts}
\usepackage{graphicx}
\usepackage{epsfig}
\usepackage{xypic}
\usepackage{tikz}
\usepackage{longtable}
\usepackage[all]{xy}
\usepackage{pgflibraryarrows}
\usepackage{pgflibrarysnakes}
\usepackage[shortlabels]{enumitem}
\usepackage{ifpdf}
\ifpdf
  \usepackage[colorlinks,final,backref=page,hyperindex]{hyperref}
\else
  \usepackage[colorlinks,final,backref=page,hyperindex,hypertex]{hyperref}
\fi
\usepackage{xspace}

\topmargin -.8cm \textheight 22.8cm \oddsidemargin 0cm \evensidemargin -0cm \textwidth 16.3cm

\newtheorem{theorem}{Theorem}[section]
\newtheorem{lemma}[theorem]{Lemma}
\newtheorem{coro}[theorem]{Corollary}

\newtheorem{prop}[theorem]{Proposition}
\theoremstyle{definition}
\newtheorem{defn}[theorem]{Definition}
\newtheorem{remark}[theorem]{Remark}
\newtheorem{exam}[theorem]{Example}


\newcommand{\nc}{\newcommand}
\newcommand{\delete}[1]{}


\nc{\tred}[1]{\textcolor{red}{#1}}
\nc{\tblue}[1]{\textcolor{blue}{#1}} \nc{\tgreen}[1]{\textcolor{green}{#1}} \nc{\tpurple}[1]{\textcolor{purple}{#1}} \nc{\btred}[1]{\textcolor{red}{\bf #1}} \nc{\btblue}[1]{\textcolor{blue}{\bf #1}} \nc{\btgreen}[1]{\textcolor{green}{\bf #1}} \nc{\btpurple}[1]{\textcolor{purple}{\bf #1}}


\newcommand{\efootnote}[1]{}

\nc{\mlabel}[1]{\label{#1}}  
\nc{\mcite}[2][]{\cite[#1]{#2}}  
\nc{\mref}[1]{\ref{#1}}  
\nc{\mbibitem}[1]{\bibitem{#1}} 

\delete{
\nc{\mlabel}[1]{\label{#1}  
{\hfill \hspace{1cm}{\bf{{\ }\hfill(#1)}}}}
\nc{\mcite}[1]{\cite{#1}}  
\nc{\mref}[1]{\ref{#1}{{\bf{{\ }(#1)}}}}  
\nc{\mbibitem}[1]{\bibitem[\bf #1]{#1}} 
}

\renewcommand\geq{\geqslant}
\renewcommand\leq{\leqslant}

\renewcommand\bar[1]{\overline{#1}}



\nc{\tforall}{\quad \text{ for all }}

\nc{\nz}{\varepsilon}
\nc{\Id}{\mathrm{Id}}
\nc{\map}[2]{{#2}^{#1}}
\nc{\gp}{B}

\nc{\name}[1]{{\bf #1}}

\nc{\Irr}{\mathrm{Irr}}
\nc{\vx}{\sigma} \nc{\vy}{\tau} \nc{\dvx}{\sigma^{(1)}} \nc{\dvy}{\tau^{(1)}} \nc{\done}{\vep} \nc{\mcitep}[1]{\mcite{#1}} \nc{\wt}{\mathrm{wt}} \nc{\bre}[1]{|#1|} \nc{\mapmonoid}{\frakM} \nc{\disjoint}{\frakM'}
\nc{\ncpoly}[1]{\langle #1\rangle}  
\nc{\mapm}[1]{\lfloor\!|{#1}|\!\rfloor}
\nc{\diff}[1]{{}^\NC\{ #1 \}} \nc{\disj}[1]{\{{#1}\}'} \nc{\mdisj}[1]{\frakM'(#1)} \nc{\brho}{\bar{\rho}} \nc{\om}{\bar{\frakm}} \nc{\frakn}{\mathfrak n} \nc{\ddeg}[1]{^{(#1)}} \nc{\opset}{X} \nc{\genset}{{Z}} \nc{\NC}{\mathrm{{NC}}} \nc{\leaf}{\mathrm{leaf}} \nc{\twig}{\mathrm{twig}} \nc{\fe}{\mathrm{fl}} \nc{\munderline}[1]{#1} \nc{\bo}{o} \nc{\dep}{\mathrm{depth}} \nc{\ofe}{\mathrm{ofl}} \nc{\dfe}{\mathrm{dfe}} \nc{\fex}{\mathrm{fex}} \nc{\dl}{\mathrm{dlex}} \nc{\db}{\mathrm{db}} \nc{\lex}{\mathrm{lex}} \nc{\clex}{\mathrm{clex}} \nc{\dgp}{\mathrm{dgp}} \nc{\dgx}{\mathrm{dgx}} \nc{\br}{\mathrm{br}} \nc{\obd}{\mathrm{odb}} \nc{\ob}{\mathrm{ob}}
\nc{\pie}{\mathrm{PIE}}
\nc{\rbo}{\mathrm{RBO}}
\nc{\supp}{\mathcal{S}}
\nc{\nul}{\mathcal{Z}}

\nc{\bin}[2]{ (_{\stackrel{\scs{#1}}{\scs{#2}}})}  
\nc{\binc}[2]{ \left (\!\! \begin{array}{c} \scs{#1}\\
    \scs{#2} \end{array}\!\! \right )}  
\nc{\bincc}[2]{  \left ( {\scs{#1} \atop
    \vspace{-1cm}\scs{#2}} \right )}  
\nc{\bs}{\bar{S}} \nc{\cosum}{\sqsubset} \nc{\la}{\longrightarrow} \nc{\rar}{\rightarrow} \nc{\dar}{\downarrow} \nc{\dprod}{**} \nc{\dap}[1]{\downarrow \rlap{$\scriptstyle{#1}$}} \nc{\md}[1]{\bar{#1}} \nc{\uap}[1]{\uparrow \rlap{$\scriptstyle{#1}$}} \nc{\defeq}{\stackrel{\rm def}{=}} \nc{\disp}[1]{\displaystyle{#1}} \nc{\dotcup}{\ \displaystyle{\bigcup^\bullet}\ } \nc{\gzeta}{\bar{\zeta}} \nc{\hcm}{\ \hat{,}\ } \nc{\hts}{\hat{\otimes}} \nc{\barot}{{\otimes}} \nc{\free}[1]{\bar{#1}} \nc{\uni}[1]{\tilde{#1}} \nc{\hcirc}{\hat{\circ}} \nc{\leng}{\ell} \nc{\lleft}{[} \nc{\lright}{]} \nc{\lc}{\lfloor} \nc{\rc}{\rfloor}
\nc{\lb}{[} 
\nc{\rb}{]} 
\nc{\curlyl}{\left \{ \begin{array}{c} {} \\ {} \end{array}
    \right.  \!\!\!\!\!\!\!}
\nc{\curlyr}{ \!\!\!\!\!\!\!
    \left. \begin{array}{c} {} \\ {} \end{array}
    \right \} }
\nc{\longmid}{\left | \begin{array}{c} {} \\ {} \end{array}
    \right. \!\!\!\!\!\!\!}
\nc{\onetree}{\bullet} \nc{\ora}[1]{\stackrel{#1}{\rar}}
\nc{\ola}[1]{\stackrel{#1}{\la}}
\nc{\ot}{\otimes} \nc{\mot}{{{\boxtimes\,}}} \nc{\otm}{\overline{\boxtimes}} \nc{\sprod}{\bullet} \nc{\scs}[1]{\scriptstyle{#1}} \nc{\mrm}[1]{{\rm #1}} \nc{\msum}{\sum\limits}
\nc{\margin}[1]{\marginpar{\rm #1}}   
\nc{\dirlim}{\displaystyle{\lim_{\longrightarrow}}\,} \nc{\invlim}{\displaystyle{\lim_{\longleftarrow}}\,} \nc{\mvp}{\vspace{0.3cm}} \nc{\tk}{^{(k)}} \nc{\tp}{^\prime} \nc{\ttp}{^{\prime\prime}} \nc{\svp}{\vspace{2cm}} \nc{\vp}{\vspace{8cm}} \nc{\proofbegin}{\noindent{\bf Proof: }}
\nc{\proofend}{$\blacksquare$ \vspace{0.3cm}}
\nc{\modg}[1]{\!<\!\!{#1}\!\!>}
\nc{\intg}[1]{F_C(#1)} \nc{\lmodg}{\!<\!\!} \nc{\rmodg}{\!\!>\!} \nc{\cpi}{\widehat{\Pi}}
\nc{\sha}{{\mbox{\cyr X}}}  
\nc{\shap}{{\mbox{\cyrs X}}} 
\nc{\shpr}{\diamond}    
\nc{\shp}{\ast} \nc{\shplus}{\shpr^+}
\nc{\shprc}{\shpr_c}    
\nc{\msh}{\ast} \nc{\zprod}{m_0} \nc{\oprod}{m_1} \nc{\vep}{\varepsilon} \nc{\labs}{\mid\!} \nc{\rabs}{\!\mid}
\nc{\astarrow}{\overset{\raisebox{-3pt}{$\ast$}}{\rightarrow}}

\nc{\Sym}{\mrm{Sym}}
\nc{\NSym}{\mrm{NSym}}
\nc{\QSym}{\mrm{QSym}}
\nc{\RQSym}{\mrm{RQSym}}
\nc{\SSym}{\mrm{\mathfrak{S}Sym}}
\nc{\HSym}{\mrm{\mathfrak{H}Sym}}
\nc{\syms}{symmetric functions\xspace}
\nc{\qsyms}{quasi-symmetric functions\xspace}
\nc{\nsymg}{\mathrm{NSym}_\gp}
\nc{\parr}{{\rm {Par}}}
\nc{\Des}{{\rm {Des}}}
\nc{\set}{{\rm {set}}}
\nc{\comp}{{\rm {comp}}}
\nc{\Ap}{{\rm {Ap}}}
\nc{\wcomp}{{\rm {rcomp}}}
\nc{\dom}{{\rm{dom}}}
\nc{\ran}{{\rm{ran}}}
\nc{\rk}{{\rm{rk}}}
\nc{\ass}{{\rm{ass}}}
\nc{\st}{{\rm{st}}}
\nc{\wcts}{|\hspace{-1.2mm}\models}

\nc{\dth}{d} \nc{\mmbox}[1]{\mbox{\ #1\ }} \nc{\fp}{\mrm{FP}} \nc{\rchar}{\mrm{char}} \nc{\Fil}{\mrm{Fil}} \nc{\Mor}{Mor\xspace} \nc{\gmzvs}{gMZV\xspace} \nc{\gmzv}{gMZV\xspace} \nc{\mzv}{MZV\xspace} \nc{\mzvs}{MZVs\xspace} \nc{\Hom}{\mrm{Hom}} \nc{\id}{\mrm{id}} \nc{\im}{\mrm{im}} \nc{\incl}{\mrm{incl}}  \nc{\mchar}{\rm char}

\nc{\Alg}{\mathbf{Alg}} \nc{\Bax}{\mathbf{Bax}} \nc{\bff}{\mathbf f} \nc{\bfk}{{\bf k}} \nc{\bfone}{{\bf 1}} \nc{\bfx}{\mathbf x} \nc{\bfy}{\mathbf y}
\nc{\base}[1]{\bfone^{\otimes ({#1}+1)}} 
\nc{\Cat}{\mathbf{Cat}} \delete{}
\nc{\detail}{\marginpar{\bf More detail}
    \noindent{\bf Need more detail!}
    \svp}
\nc{\Int}{\mathbf{Int}} \nc{\Mon}{\mathbf{Mon}}
\nc{\rbtm}{{shuffle }} \nc{\rbto}{{Rota-Baxter }} \nc{\remarks}{\noindent{\bf Remarks: }} \nc{\Rings}{\mathbf{Rings}} \nc{\Sets}{\mathbf{Sets}}
\nc{\balpha}{\mathbf{\alpha}}

\nc{\BA}{{\mathbb A}} \nc{\CC}{{\mathbb C}} \nc{\DD}{{\mathbb D}} \nc{\EE}{{\mathbb E}} \nc{\FF}{{\mathbb F}} \nc{\GG}{{\mathbb G}} \nc{\HH}{{\mathbb H}} \nc{\LL}{{\mathbb L}} \nc{\NN}{{\mathbb N}} \nc{\KK}{{\mathbb K}} \nc{\PP}{{\mathbb P}} \nc{\QQ}{{\mathbb Q}} \nc{\RR}{{\mathbb R}} \nc{\TT}{{\mathbb T}} \nc{\VV}{{\mathbb V}} \nc{\ZZ}{{\mathbb Z}}


\nc{\cala}{{\mathcal A}} \nc{\calc}{{\mathcal C}} \nc{\cald}{{\mathcal D}} \nc{\cale}{{\mathcal E}} \nc{\calf}{{\mathcal F}} \nc{\calg}{{\mathcal G}} \nc{\calh}{{\mathcal H}} \nc{\cali}{{\mathcal I}} \nc{\call}{{\mathcal L}} \nc{\calm}{{\mathcal M}} \nc{\caln}{{\mathcal N}} \nc{\calo}{{\mathcal O}} \nc{\calp}{{\mathcal P}} \nc{\calr}{{\mathcal R}} \nc{\cals}{{\mathcal S}} \nc{\calt}{{\mathcal T}} \nc{\calw}{{\mathcal W}} \nc{\calk}{{\mathcal K}} \nc{\calx}{{\mathcal X}}
\nc{\calz}{{\mathcal Z}}
 \nc{\CA}{\mathcal{A}}

\nc{\fraka}{{\mathfrak a}} \nc{\frakA}{{\mathfrak A}} \nc{\frakb}{{\mathfrak b}} \nc{\frakB}{{\mathfrak B}}
\nc{\frakc}{{\mathfrak c}}  \nc{\frakD}{{\mathfrak D}}
\nc{\frakH}{{\mathfrak H}}
\nc{\frakh}{{\mathfrak h}} \nc{\frakM}{{\mathfrak M}}
\nc{\frakO}{{\mathfrak O}}
\nc{\frakE}{{\mathfrak E}}
\nc{\bfrakM}{\overline{\frakM}} \nc{\frakm}{{\mathfrak m}} \nc{\frakP}{{\mathfrak P}} \nc{\frakN}{{\mathfrak N}} \nc{\frakp}{{\mathfrak p}} \nc{\frakS}{{\mathfrak S}}
\nc{\frakk}{{\mathfrak k}}
\nc{\frakx}{{\mathfrak x}}
\nc{\frakl}{{\mathfrak l}} \nc{\ox}{\bar{\frakx}} \nc{\frakX}{{\mathfrak X}} \nc{\fraky}{{\mathfrak y}} \nc\dop{\delta}
\nc{\Reduce}{{\rm Red}}

\font\cyr=wncyr10 \font\cyrs=wncyr7
\nc{\redt}[1]{\textcolor{red}{#1}}
\nc{\li}[1]{\textcolor{red}{#1}}
\nc{\lir}[1]{\textcolor{red}{\tt Li: #1}}
\nc{\yur}[1]{\textcolor{blue}{\tt Yu:#1}}

\begin{document}
\title[The Hopf algebras of signed permutations and weak compositions]{The Hopf algebras of signed permutations, of weak quasi-symmetric functions and of Malvenuto-Reutenauer}

\author{Li Guo}
\address{
Department of Mathematics and Computer Science, Rutgers University, Newark, NJ 07102, USA}
\email{liguo@rutgers.edu}

\author{Jean-Yves Thibon}
\address{Laboratoire d'Informatique Gaspard Monge, Universit\'e Paris-Est Marne-la-Vall\'ee, 5 Boulevard Descartes,
Champs-sur-Marne, 77454 Marne-la-Vall\'ee cedex 2, France}
\email{jyt@univ-mlv.fr}

\author{Houyi Yu}
\address{School of Mathematics and Statistics, Southwest University, Chongqing 400715, China}
\email{yuhouyi@swu.edu.cn}

\hyphenpenalty=8000
\date{\today}

\begin{abstract}
This paper builds on two covering Hopf algebras of the Hopf algebra $\QSym$ of quasi-symmetric functions, with linear bases parameterized by compositions. One is the Malvenuto-Reutenauer Hopf algebra $\SSym$ of permutations, mapped onto $\QSym$ by taking descents of permutations. The other one is the recently introduced Hopf algebra $\RQSym$ of weak quasi-symmetric functions, mapped onto $\QSym$ by extracting compositions from weak compositions.

We extend these two surjective Hopf algebra homomorphisms into a commutative diagram by introducing a Hopf algebra $\HSym$, linearly spanned by signed permutations from the hyperoctahedral groups, equipped with the shifted quasi-shuffle product and deconcatenation coproduct. Extracting a permutation from a signed permutation defines a Hopf algebra surjection form $\HSym$ to $\SSym$ and taking a suitable descent from a signed permutation defines a linear surjection from $\HSym$ to $\RQSym$.
The notion of signed $P$-partitions from signed permutations is introduced which, by taking generating functions, gives fundamental weak quasi-symmetric functions and sends the shifted quasi-shuffle product to the product of the corresponding generating functions.
Together with the existing Hopf algebra surjections from $\SSym$ and $\RQSym$ to $\QSym$, we obtain a commutative diagram of Hopf algebras revealing the close relationship among compositions, weak compositions, permutations and signed permutations.
\end{abstract}

\subjclass[2010]{
05E05,  
16T30,  
16W99,  
}

\keywords{quasi-symmetric function, Malvenuto-Reutenauer Hopf algebra, weak quasi-symmetric function, signed permutation, signed $P$-partition, quasi-shuffle product, Hopf algebra}

\maketitle

\vspace{-.8cm}

\tableofcontents

\vspace{-.8cm}


\allowdisplaybreaks

\section{Introduction}\label{sec:int}
This paper introduces the Hopf algebra of signed permutations and establishes its relationship with the Hopf algebras of permutations and of weak quasi-symmetric functions.

\subsection{Hopf algebras of various symmetric functions}

Beginning with the pioneering work of Joni and Rota
~\cite{JR79}, many combinatorial objects have been endowed with Hopf algebra structures which, while giving precise algebraic meanings of combinatorial operations, provided intuitive realizations of abstract concepts. Furthermore, relationship between different combinatorial objects are made precise as homomorphisms between these Hopf algebras through which properties in one Hopf algebra become better understood through the interaction with other Hopf algebras.

Central among these ``combinatorial Hopf algebras" are the Hopf algebras $\Sym$ of symmetric functions~\cite{Gl77},  $\QSym$ of quasi-symmetric functions~\cite{Ge84},
$\NSym$ of  noncommutative symmetric functions~\cite{GKLRT95} and $\SSym$ of permutations~\cite{Mal94,MRe95}.
Homomorphisms among these Hopf algebras are summarized in the commutative diagram

\begin{equation}
\begin{split}
\xymatrix{
\SSym \ar@{->>}[d]_{\mathcal{D}_1} &&\ \NSym \ar@{>->}[ll]  \ar@{->>}[d] \\
\QSym 
&& \ \text{Sym} \ar@{>->}[ll]
}
\mlabel{eq:diag1}
\end{split}
\end{equation}
demonstrating the close relations among partitions, compositions and permutations.
For instance, the multiplication of two fundamental quasi-symmetric functions in $\QSym$ is made much simpler when the functions are lifted to $\SSym$ and multiplied there by means of the shifted shuffle product.

Quasi-symmetric functions were formally introduced by Gessel~\cite{Ge84} in 1984 as a source of generating functions for Stanley's ordinary $P$-partitions~\cite{St1972}, even though the related ideas can be tracked back much earlier.
Since then they have found applications in diverse areas in mathematics including Hopf algebras~\cite{Ehr96}, representation theory~\cite{Hi00}, number theory~\cite{Ho00,HI17}, algebraic topology~\cite{BR08}, discrete geometry~\cite{BHW03} and probability~\cite{HH09}. The pivotal role played by quasi-symmetric functions was conceptualized by the property
that the Hopf algebra of quasi-symmetric functions is the terminal object in the category of combinatorial Hopf algebras~\cite{ABS06}.

The space of quasi-symmetric functions is canonically spanned by monomial quasi-symmetric functions parameterized by compositions, that is, by vectors of positive integers.
In order to study generalizations of symmetric functions in the context of Rota-Baxter algebras envisioned by Rota~\cite{Ro95}, the authors introduced in the recent paper~\cite{YGT} the Hopf algebra $\RQSym$ of
{\bf weak quasi-symmetric functions}, linearly spanned by monomial weak quasi-symmetric functions parameterized by weak compositions, that is, by vectors of non-negative integers. To deal with the divergence of such a function caused by the zero entries in a weak composition, the notion of  a {\bf regularized composition} (called an $\widetilde{\NN}$-composition in~\cite{YGT}) was introduced to give a ``regularization" of the weak quasi-symmetric functions. It is shown that $\QSym$ is at the same time a Hopf subalgebra and a Hopf quotient algebra of $\RQSym$ via a Hopf algebra surjection $\varphi_1:\RQSym \to \QSym$, leading to the following extension of the commutative diagram in Eq.~\eqref{eq:diag1}:
\begin{equation}
\begin{split}
\xymatrix{
\HSym \atop \text{(signed permutations)} \ar@{-->>}[rr]^{\varphi_2}\ar@{-->>}[d]_{\mathcal{D}_2}^{\text{weak descents \&} \atop \text{signed } P-\text{partitions}}
&&\SSym \atop \text{(permutations)} \ar@{->>}[d]_{\mathcal{D}_1}^{\text{descents \&}\atop P\text{-partitions}} &&\ \NSym \ar@{>->}[ll]  \ar@{->>}[d] \\
  \RQSym \atop  \text{(weak compositions)} \ar@{->>}[rr]^{\varphi_1}
&&\QSym \atop  \text{(compositions)} && \ \text{Sym} \ar@{>->}[ll]
}
\mlabel{eq:diag2}
\end{split}
\end{equation}
which suggests a ``pullback" Hopf algebra $\HSym$ together with suitable Hopf algebra homomorphisms $\varphi_2$ and $\mathcal{D}_2$.

In a nutshell, the purpose of this paper is to complete this square.
We next give an outline of the paper centered around this diagram.

\subsection{Outline of the paper}

The Malvenuto-Reutenauer Hopf algebra~\cite{Mal94, MRe95}, also known as the Hopf algebra of free quasi-symmetric functions~\cite{DHT02}, is  $\SSym=\bigoplus_{n\geq0} \bfk \mathfrak{S}_n$, linearly spanned by permutations in the symmetric groups $\frakS_n$ and equipped with the shifted shuffle product of permutations.

Signed permutations on $n$ letters naturally form a group under composition, called the $n$-th hyperoctahedral group and denoted by $\mathfrak{B}_n$. This is recalled in Section~~\ref{sec:hopfalgonW} in order to obtain a Hopf algebra  $\HSym=\bigoplus_{n=0}^{\infty} \bfk \mathfrak{B}_n$ of signed permutations (Theorem~\ref{thm:WSisahopfalg}).
Here the multiplication turns out to be defined by a shifted quasi-shuffle product of signed permutation, which calls for a quasi-shuffle algebra with a weight and with the generating algebra not necessarily commutative.

In Section~\ref{sec:BaP} we first give some background on quasi-symmetric functions and weak quasi-symmetric functions.
The Hopf algebra homomorphism $\cald_1: \SSym \to \QSym$ in~\cite{Mal94,MRe95} was defined by taking descents of permutations. By taking weak descents, each signed permutation is associated with a unique regularized composition, giving rise to a linear map $\cald_2:\HSym \to \RQSym$.
The Hopf algebra surjection $\varphi_1: \RQSym \to \QSym$ in~\cite{YGT} was defined by extracting a composition from a regularized composition. Likewise, we define a linear surjection $\varphi_2:\HSym \to \SSym$ by extracting a permutation from a signed permutation.
Then the commutativity of the resulting left square in diagram~(\ref{eq:diag2}) is verified in Theorem~\ref{thm:comm}, on the level of linear maps.

The fact that $\varphi_2$ is a Hopf algebra homomorphism is proved in Theorem \ref{thm:phi2}, in which the interpretation of the shifted quasi-shuffle product as a stuffle product plays a key role.

In order to verify that $\cald_1$ is a Hopf algebra homomorphism~\cite{Mal94,MRe95}, the notion of $P$-partitions was used to realize the fundamental quasi-symmetric functions as generating functions for partitions of permutations, showing that the shifted shuffle product of two permutations is sent to the product of the two corresponding fundamental quasi-symmetric functions.

To apply this idea to verify the Hopf algebra homomorphism property of $\cald_2$, we need a suitable generalization of $P$-partitions.
There are a number of generalizations of $P$-partitions. Stembridge~\cite{Ste97} outlined how the generating functions
of enriched $P$-partitions give peak quasi-symmetric functions.
Chow~\cite{Cho01} studied type $B$ quasi-symmetric functions using ordinary type $B$ $P$-partitions which is a simpler version of the notion due to Reiner~\cite{Re93}.
Petersen~\cite{PK07} connected type $B$  enriched $P$-partitions to type $B$ peak algebras. Lam and Pylyavskyy \cite{LP07} defined set-valued
P-partitions, which are maps that take nonempty finite  sets of positive integers as values.

For our purpose, we introduce another generalization of $P$-partitions. They are called signed $P$-partitions and are defined from signed permutations.
We show that the generating function for the signed $P$-partition of a signed permutation is precisely the fundamental weak quasi-symmetric function indexed by the regularized composition coming from the signed permutation by taking weak descents (Theorem \ref{thm:Gamma(pi)=Fwcomp(pi)}). Thus this process coincides with the linear map $\cald_2$. Further this process sends the shifted quasi-shuffle product of signed permutations to the product of the corresponding fundamental weak quasi-symmetric functions, thereby showing that $\cald_2$ is a Hopf algebra surjection  (Theorem~\ref{HAHD2fHSymtRQSym}).

\smallskip

\noindent
{\bf Notations.} Unless otherwise specified, an algebra in this paper is assumed to be defined over a field $\bfk$ of characteristic zero.
Denote by $\mathbb{N}$ the set of nonnegative integers and $\mathbb{P}$ the set of positive integers.
Given any $m, n \in \PP$ with $m\leq n$, let $[m,n]=\{m, m +1,\cdots, n\}$
and $[n]=[1, n]$.
For a set $A$ of positive integers, the poset notation $A=\{a_1<a_2<\cdots<a_k\}$ indicates that $A=\{a_1,a_2,\cdots,a_k\}$ and $a_1<a_2<\cdots<a_k$.

\section{Hopf algebra of signed permutations}
\label{sec:signperm}
\label{sec:hopfalgonW}
\label{subsec:Partialpermutations}

In this section we show that the linear space of signed permutations carries a Hopf algebra structure that naturally extends the Malvenuto-Reutenauer Hopf algebra of permutations.

The Malvenuto-Reutenauer Hopf algebra~\cite{Mal94, MRe95} $\SSym=\bigoplus_{n\geq0} \bfk \mathfrak{S}_n$ is linearly spanned by the permutations $\mathfrak{S}_n$ on $[n], n\geq 0$, with the convention that $\mathfrak{S}_0=\{\imath\}$, $\imath$ being the identity.
The elements $\pi\in\mathfrak{S}_n$ are viewed as words $\pi=(\pi(1),\pi(2),\cdots, \pi(n))$ or simply $\pi_1\pi_2\cdots\pi_n$.
If $m$ is a positive integer, then let $\pi[m]:=(\pi_1+m)(\pi_2+m)\cdots(\pi_n+m)$,
regarded as a permutation of $[m+1,m+n]$.

For two permutations $\sigma\in \mathfrak{S}_m$ and $\tau\in \mathfrak{S}_n$, their \name{shifted shuffle}, denoted by $\sigma\overline{\shap}\tau$, is the shuffle of $\sigma$
and $\tau[m]$,
so $\sigma\overline{\shap}\tau=\sigma \shap \tau[m]$ is the sum of permutations in $\mathfrak{S}_{m+n}$
whose first $m$ smaller values are in the same relative order as $\sigma$ and whose last $n$ larger values are in the same relative order as $\tau[m]$.
For example, we have
$$12\overline{\shap}12=12\shap34= 1234+1324+1342+3124+3142+3412.$$
Then $\SSym$ is a Hopf algebra with the shifted shuffle product $\overline{\shap}$ and the shifted deconcatenation coproduct $\Delta:=(\st\otimes\st)\delta$, where $\delta$ is the deconcatenation coproduct and $\st$ is the usual standardization on words, replacing the $i$-th smallest element of a word by $i$. More precisely,
\begin{align}
   \Delta(\pi_1\cdots \pi_n)=\sum_{p=0}^m \st(\pi_1\cdots\pi_p)\otimes \st(\pi_{p+1}\cdots\pi_n).
\mlabel{eq:delta}
\end{align}
For instance,
$\Delta(1324)=\imath\otimes 1324+1\otimes 213+12\otimes 12
   +132\otimes 1+1324\otimes \imath$.
See \cite{AS05} for further background on the Malvenuto-Reutenauer Hopf algebra.

Let $[n]^\pm$ denote the set $\{n^{-},\cdots,2^{-},1^{-},0,1,$ $2,\cdots,n\}$, where the additive inverse $-n$ of an integer $n$ is denoted by $n^{-}$  for notational clarity.
A \name{signed permutation} of $[n]$ is a permutation  $\pi$ of the set $[n]^\pm$ such that $\pi({i}^{-})={\pi(i)}^{-}$ for $i\in[0,n]$. This notion has many applications in algebra and combinatorics~\cite{BB05,MR95,Re93s}.
Signed permutations of $[n]$ naturally form a group under
composition, called the $n$-th \name{hyperoctahedral group} and denoted by $\mathfrak{B}_n$, which is a Coxeter group of type $B$.
We identify the $n$-th symmetric group $\mathfrak{S}_n$ with the subgroup of $\mathfrak{B}_n$ consisting of all signed permutations $\pi$ such that $\pi([n])=[n]$.
We will equip $\HSym=\bigoplus_{n\geq0}\bfk\mathfrak{B}_n$
with a product and a coproduct which give a Hopf algebra structure on $\HSym$, such that $\SSym$ is a Hopf subalgebra and a Hopf quotient algebra of $\HSym$.

For a signed permutation $\pi$, it is easy to see that $\pi(0)=0$.
So $\pi$ is uniquely determined by the images of $1,2,\cdots,n$. So we will write $\pi=(\pi(1),\pi(2),\cdots,\pi(n))$ or simply  $\pi=\pi_1\pi_2\cdots\pi_n$. The number $n$ is called its \name{length} and denoted by $\ell(\pi)$. For example, $\pi=53^{-}246^{-}1^{-}$ is the signed permutation
that maps $1$ to $5$, $2$ to ${3}^{-}$, $3$ to $2$, and so on.

Motivated by the notions of standard permutation \cite{Reu93} and  colored standardization \cite{NT10},
for a word $w=a_1a_2\cdots a_n$ on the alphabet set $\mathbb{Z}\backslash\{0\}$,  we define its \name{standard signed permutation} to be the unique signed permutation
${\st}(w)=b_1b_2\cdots b_n\in \mathfrak{B}_n$ such that
\begin{enumerate}
\item for $i\in [n]$, $b_i$ and $a_i$ have the same sign,
\item for $i, j\in[n]$, if $|a_i|<|a_j|$ or $|a_i|=|a_j|$ with $i<j$, then $|b_i|<|b_j|$,
\end{enumerate}
Here $|a_i|$ denotes the absolute value of $a_i$.
For instance, $\st(3{2}^{-}7{5}^{-})=2{1}^{-}4{3}^{-}$ and $\st(2{2}^{-}1{2}^{-}2)=2{3}^{-}1{4}^{-}5$.

To extend the shifted shuffle product $\overline{\shap}$ on the space $\SSym$ of permutations to the space $\HSym$ of signed permutations, we make use of the quasi-shuffle product, by including a weight and removing the commutativity condition on the generating algebra. See also~\cite{CGPZ}.

Let $A$ be a set whose elements are called \name{alphabets} or \name{letters}.
Let $A^*$ denote the free monoid generated by $A$ whose elements are called {\bf words} on the letter set $A$, including the the empty sequence, denoted by $\imath$ and called the \name{empty word}.

Given a word $w=a_1a_2\cdots a_n$ on $A$ with $a_i\in A$ for $i\in [n]$, we call $a_i$ the parts of $w$ and
$n$ the length of $w$, denoted by $\ell(w)$. The length of the empty word $\imath$ is defined to be $0$.
Then the linear space $\bfk A^*$ with the concatenation product is the free associative algebra $\bfk\langle A\rangle$ on $A$.
Next suppose that there is an associative product $\bullet$ on $\bfk A$.
Fix $\lambda\in \bfk$.
We define a new $\bfk$-bilinear multiplication $\star_\lambda=\star_{\lambda,\bullet}$ on $\bfk\langle A\rangle$ by making $\imath=1_{\bfk}\imath\in \bfk\langle A\rangle$
the identity element and define the product recursively by
\begin{align}\label{eq:inductquasishuffleprod}
au\star_\lambda bv:=a(u\star_\lambda bv)+b(au\star_\lambda v)+\lambda(a\bullet b)(u\star_\lambda v)
\end{align}
for all letters $a,b$ in $A$ and all words $u,v$ in $A^*$.
We remark that the restriction of the commutativity of the product $\bullet$ in the original construction of~\cite{Ho00,HI17} is removed here.

As in~\cite{HI17}, we define the deconcatenation coproduct $\delta:\bfk\langle A\rangle\rightarrow \bfk\langle A\rangle\otimes \bfk\langle A\rangle$ and
counit $\epsilon:\bfk\langle A\rangle\rightarrow\bfk$ by
\begin{align}\label{eq:deltacohp}
    \delta(w):=\sum_{uv=w}u\otimes v\qquad \text{and}\qquad
\epsilon(w):=\begin{cases}
                    1, &\text{if}\  \hbox{$w=\imath$,} \\
                    0, & \hbox{otherwise},
                   \end{cases}
 \tforall w\in A.
 \end{align}
Then $(\bfk\langle A\rangle,\delta,\epsilon)$ is a connected cograded coalgebra with $\bfk\langle A\rangle=\bigoplus_{n\geq0}\bfk\langle A\rangle_n$, where $\bfk\langle A\rangle_n$ is the subspace spanned by the words of length $n$.

Analogous to~\cite{HI17}, we have
\begin{theorem}\label{thm:quasishuffleprodweitlambda}
For any nonzero element $\lambda$ in $\bfk$,
$(\bfk\langle A\rangle,\star_{\lambda},\delta)$ is a Hopf algebra.
Moreover, when $\lambda\neq 0$, the product $\bullet$ is commutative if and only if the product $\star_\lambda$ is commutative.
\end{theorem}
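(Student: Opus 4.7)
The plan is to mimic Hoffman's original proof for the commutative quasi-shuffle Hopf algebra, but taking care at each step to track where the commutativity of $\bullet$ was (or was not) used, and to supply separate arguments where it was. The four things to establish are: (i) $\star_\lambda$ is associative, (ii) $\delta$ is a $\star_\lambda$-algebra homomorphism, so that $(\bfk\langle A\rangle,\star_\lambda,\delta,\epsilon)$ is a bialgebra, (iii) the bialgebra is connected and graded (by word length) and hence automatically has an antipode, making it a Hopf algebra, and (iv) the commutativity equivalence.

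For associativity, I would argue by induction on $\ell(u)+\ell(v)+\ell(w)$. The base case (one of the words empty) is immediate because $\imath$ is the unit. For the inductive step, write $u=au'$, $v=bv'$, $w=cw'$ and expand $(u\star_\lambda v)\star_\lambda w$ and $u\star_\lambda(v\star_\lambda w)$ using \eqref{eq:inductquasishuffleprod}; each side unfolds into nine terms of the form $x(\cdots)$ with $x\in\{a,b,c,\,a\bullet b,\,b\bullet c,\,a\bullet c\}$ or $x=(a\bullet b)\bullet c=a\bullet(b\bullet c)$. Three pairs of terms match directly, and the terms involving two applications of $\bullet$ match precisely because $\bullet$ is associative on $\bfk A$. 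Commutativity of $\bullet$ is never required because on each side the letter $a$ from $u$ always stays to the left of the letter $c$ from $w$ in every $\bullet$-product that appears. The remaining terms are grouped and matched by the inductive hypothesis applied to shorter triples.

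For the bialgebra compatibility $\delta(u\star_\lambda v)=\delta(u)\star_\lambda\delta(v)$ (where the right-hand side uses the induced product on $\bfk\langle A\rangle\otimes\bfk\langle A\rangle$), I would again use induction on $\ell(u)+\ell(v)$, writing $u=au'$, $v=bv'$, and expanding both sides using \eqref{eq:inductquasishuffleprod} together with $\delta(ax)=\imath\otimes ax+\sum_{ax=a\alpha\beta}a\alpha\otimes\beta$. The three summands of the recursion pair naturally with the three types of cuts of the product (cutting inside the first word, inside the second, or between a $\bullet$-merged letter and the rest), and the inductive hypothesis handles each piece. The counit axiom is trivial. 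Since $\bfk\langle A\rangle$ is graded by length, with the degree zero part equal to $\bfk\imath$, connectedness is clear, and the antipode then exists and is unique by Takeuchi's formula for connected graded bialgebras.

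For the last assertion, the easy direction ($\bullet$ commutative $\Rightarrow$ $\star_\lambda$ commutative) is another induction on $\ell(u)+\ell(v)$ using \eqref{eq:inductquasishuffleprod}: the three summands of $au'\star_\lambda bv'$ match those of $bv'\star_\lambda au'$ term by term, using $a\bullet b=b\bullet a$ for the third. For the converse, assume $\star_\lambda$ is commutative; then for any two letters $a,b\in A$, \eqref{eq:inductquasishuffleprod} with $u=v=\imath$ gives
\[
a\star_\lambda b=ab+ba+\lambda(a\bullet b),\qquad b\star_\lambda a=ba+ab+\lambda(b\bullet a),
\]
so commutativity forces $\lambda(a\bullet b-b\bullet a)=0$, and since $\lambda\neq 0$ we conclude $a\bullet b=b\bullet a$.

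The main obstacle is the associativity of $\star_\lambda$: it is the only step where the non-commutativity of $\bullet$ could cause trouble, and the bookkeeping has to be done carefully enough to confirm that every $\bullet$-product appearing on the two sides respects the left-to-right order of letters coming from $u,v,w$, so that associativity of $\bullet$ alone suffices.
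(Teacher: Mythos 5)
Your proposal is correct and follows essentially the same route as the paper: the paper likewise handles the bialgebra axioms by the classical inductive argument of Hoffman--Ihara (merely citing it, with the observation that only associativity of $\bullet$ is used), obtains the antipode from connectedness of the length grading, and proves the converse of the commutativity claim by the same two-letter computation $a\star_\lambda b-b\star_\lambda a=\lambda(a\bullet b-b\bullet a)$. The only difference is that you spell out the inductive bookkeeping---in particular the check that every $\bullet$-product keeps letters of $u$ to the left of letters of $v$ to the left of letters of $w$---which the paper leaves implicit.
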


\begin{proof}
The verification for $(\bfk\langle A\rangle,\star_{\lambda},\delta)$ to be a bialgebra is the same as for the classical case~\cite[Theorem 2.1]{HI17} and~\cite[Theorem 3.1]{Ho00}.
Since $\bfk\langle A\rangle_0=\bfk\imath$, the bialgebra $(\bfk\langle A\rangle,\star_{\lambda},\delta)$ is a connected
cograded coalgebra, and hence is a Hopf algebra~\cite{GG19,Man08}.

For the second statement, if $\bullet$ is commutative, then so is $\star_{\lambda}$ as in the classical case of weight $\lambda=1$~\cite{Ho00}.
Conversely, by Eq.~\eqref{eq:inductquasishuffleprod}, for any letters $a,b$,
we have
$$\lambda(a\bullet b-b\bullet a)=a\star_\lambda b-b\star_\lambda a,$$
so the commutativity of $\star_\lambda$ implies the commutativity of $\bullet$.
\end{proof}

If $\lambda=0$ or the product $\bullet$ is identically zero, then $\star_\lambda$ specializes to the usual shuffle product $\shap$ on $\bfk\langle A\rangle$.
If $\lambda$ equals to $1$ and $-1$ respectively, then $\star_\lambda$ coincides with the quasi-shuffle products $*$ and $\star$ in~\cite{HI17,IKOO11} dictating the products of multiple zeta values and multiple zeta-star values, respectively.
Following~\cite{HI17},
we will  call $\star_\lambda$ the \name{quasi-shuffle product of weight $\lambda$ with respect to $\bullet$} and call $(\bfk\langle A\rangle,\star_\lambda)$
the \name{quasi-shuffle algebra of weight $\lambda$}.

We next apply this quasi-shuffle product to signed permutations.

Let $A$ be an alphabet set and fix a subset $Y$ of $A$. Let $\chi_Y$ be the characteristic function of $Y$. Define a product $\bullet_Y$ on $\bfk A$ by setting
\begin{align*}
a\bullet b:=a\bullet_Y b:=\chi_Y(a)\chi_Y(b)a = \begin{cases} a, &\text{if } a, b\in Y,\\
0 & \text{otherwise},
\end{cases}
\quad \text{for all } a, b\in A,
\end{align*}
and applying bilinearity. By a direct check of the associativity of $\bullet_Y$, we obtain

\begin{lemma}\label{lem:circ_YonkA}
Let $A$ be a set and $Y$ a subset of $A$. Then
$(\bfk A,\bullet_Y)$ is an associative $\bfk$-algebra.
\end{lemma}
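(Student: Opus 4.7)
The plan is to check associativity of $\bullet_Y$ directly on basis elements $a,b,c\in A$, since bilinear extension automatically transports an associative pairing on $A$ to an associative product on $\bfk A$. Bilinearity itself is built into the definition, so nothing else needs to be verified.

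Fixing $a,b,c\in A$ and unfolding the defining formula $x\bullet_Y y=\chi_Y(x)\chi_Y(y)\,x$, the left-associated product becomes
\[
(a\bullet_Y b)\bullet_Y c=\chi_Y(a)\chi_Y(b)\,(a\bullet_Y c)=\chi_Y(a)^{2}\chi_Y(b)\chi_Y(c)\,a,
\]
and the right-associated product becomes
\[
a\bullet_Y(b\bullet_Y c)=\chi_Y(b)\chi_Y(c)\,(a\bullet_Y b)=\chi_Y(a)\chi_Y(b)^{2}\chi_Y(c)\,a.
\]
Since $\chi_Y$ takes values in $\{0,1\}$, one has $\chi_Y(a)^{2}=\chi_Y(a)$ and $\chi_Y(b)^{2}=\chi_Y(b)$, so both expressions collapse to the common value $\chi_Y(a)\chi_Y(b)\chi_Y(c)\,a$. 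This yields associativity on $A$, and hence on $\bfk A$.

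There is essentially no obstacle here: the verification is a one-line computation. The only substantive observation is the idempotence $\chi_Y^{2}=\chi_Y$, which is exactly what allows the ``anchor'' on the first letter $a$ produced in $(a\bullet_Y b)\bullet_Y c$ to persist through the second multiplication and match the right-associated result. Conceptually, $\bullet_Y$ is the left-absorption operation $a\cdot b=a$ restricted to $\bfk Y$ and extended by zero outside $Y$, and left-absorption is a standard example of an associative (in fact, band) multiplication; but the direct computation above is the cleanest route.
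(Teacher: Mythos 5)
Your computation is correct and is precisely the ``direct check'' that the paper leaves to the reader (the paper states the lemma follows ``by a direct check of the associativity of $\bullet_Y$'' and writes out no further detail). Both associated products collapse to $\chi_Y(a)\chi_Y(b)\chi_Y(c)\,a$ via the idempotence of $\chi_Y$, exactly as you argue, and no unit is required since the lemma only claims an associative algebra.
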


Now fix a $\lambda\in \bfk$, we define the  shifted quasi-shuffle product $\overline{\star}_\lambda$ of weight $\lambda$ on $\HSym$ with $\imath$ being the identity element.
Take $A$ to be the set $-\PP\cup {\PP}$, and take $Y$ to be the subset $-\PP$.
Then the product $\bullet_Y$ becomes
\begin{align}\label{eq:diamondsp1}
a\bullet b:=\begin{cases}
a,&\text{if}\ a<0, b<0,\\
0,&{\rm otherwise},
\end{cases}
\end{align}
where $a,b$ are nonzero integers. By Lemma \ref{lem:circ_YonkA}, the product $\bullet$ is associative.

For $\sigma\in \mathfrak{B}_m$ and $\tau\in \mathfrak{B}_n$,
define the \name{shifted quasi-shuffle product} $\overline{\star}_\lambda$ of weight $\lambda$ with respect to $\bullet$ by
\begin{align}\label{eq:defnsigmacdotlambdatau}
\sigma \overline{\star}_\lambda \tau:=\st(\sigma \star_{\lambda} \tau[m]),
\end{align}
where $\star_{\lambda}$ is the quasi-shuffle product of weight $\lambda$ in Eq.~\eqref{eq:inductquasishuffleprod}, $\tau[m]$ is the word on the alphabet set $[{m^{-}+n^{-}},m^{-}+1^{-}] \cup[m+1,m+n]$
obtained by replacing each $i\in\mathbb{P}$ in $\tau$ by $i+m$ and each ${i}^{-}\in -\PP$ by ${i^{-}+m^{-}}$, respectively.
For example,
\begin{align}\label{example12st21}
1{2}^{-}\overline{\star}_\lambda2{1}^{-}=&\st(1{2}^{-}\star_{\lambda}4{3}^{-})\notag\\
=&1{2}^{-}4{3}^{-}+14{2}^{-}{3}^{-}+14{3}^{-}{2}^{-}+41{2}^{-}{3}^{-}+41{3}^{-}{2}^{-}+4{3}^{-}1{2}^{-}+\lambda13{2}^{-}+\lambda31{2}^{-}.
\end{align}

\begin{remark}\label{rem:prodstarstwpshiftlarg}
Let $\sigma\in \mathfrak{B}_m$ and $\tau\in \mathfrak{B}_n$. Then, for any positive integer $k\geq m$, the words $\sigma$ and $\tau[k]$ have no common letters,
and it is straightforward to check that $\st(\sigma \star_{\lambda} \tau[m])=\st(\sigma \star_{\lambda} \tau[k])$.
This observation enables us to define the product $\overline{\star}_\lambda$ by
 \begin{align}\label{eq:defncdotweakperm-1}
\sigma \overline{\star}_\lambda \tau=\st(\sigma \star_{\lambda} \tau[k])
\end{align}
for any positive integer $k\geq m$.
 \end{remark}

\begin{prop}
For any $\lambda\in\bfk$,
the $\bfk$-linear space $\HSym$ with the shifted quasi-shuffle product $\overline{\star}_\lambda$
is a unitary $\bfk$-algebra.
\end{prop}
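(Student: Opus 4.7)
The plan is to verify the two algebra axioms separately. The unit axiom is direct from the definition: for any $\sigma\in\mathfrak{B}_m$, one has $\sigma\overline{\star}_\lambda \imath = \st(\sigma\star_\lambda \imath) = \st(\sigma)=\sigma$, since $\sigma$ is already a standard signed permutation of $[m]$, and the check on the left is symmetric. All the substance is in associativity.

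For associativity, my strategy is to show that for $\sigma\in\mathfrak{B}_m$, $\tau\in\mathfrak{B}_n$ and $\rho\in\mathfrak{B}_p$, both $(\sigma\overline{\star}_\lambda\tau)\overline{\star}_\lambda\rho$ and $\sigma\overline{\star}_\lambda(\tau\overline{\star}_\lambda\rho)$ equal
\[
\st\bigl(\sigma\star_\lambda\tau[m]\star_\lambda\rho[m+n]\bigr),
\]
which is unambiguous because $\star_\lambda$ is associative on $\bfk\langle A\rangle$ with $A=-\PP\cup\PP$ by Theorem~\ref{thm:quasishuffleprodweitlambda}, applied to the associative bullet product of Lemma~\ref{lem:circ_YonkA}. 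The technical heart is a mild strengthening of Remark~\ref{rem:prodstarstwpshiftlarg}: since $\st$ depends only on the sequence of signs and the relative order of absolute values of a word, if $u_1$ and $u_2$ have the same signs in the same positions and the same rank sequence of absolute values, and $v$ has all absolute values uniformly larger than those in $u_1,u_2$, then $\st(u_1\star_\lambda v)=\st(u_2\star_\lambda v)$; the mirror statement holds with the roles of ``larger'' and ``smaller'' exchanged. Applied on the left with $u_1=\st(w),\,u_2=w$ for each word $w$ in the support of $\sigma\star_\lambda\tau[m]$, and on the right with $u_1=\st(v)[m+n],\,u_2=v[m+n]$ for each word $v$ in $\tau\star_\lambda\rho[n]$, this compatibility pulls $\st$ outside all intermediate sums, leaving only the associativity of $\star_\lambda$ on $\bfk\langle A\rangle$ to invoke.

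The main obstacle is the length-variability caused by the $\bullet$-contractions in Eq.~\eqref{eq:diamondsp1}: a summand $\pi$ appearing in $\sigma\overline{\star}_\lambda\tau$ can have length strictly less than $m+n$, so on the nose the iterated product $\pi\overline{\star}_\lambda\rho=\st(\pi\star_\lambda\rho[\ell(\pi)])$ uses a different shift of $\rho$ for each $\pi$. Remark~\ref{rem:prodstarstwpshiftlarg} is exactly what circumvents this: it lets me replace every $\rho[\ell(\pi)]$ by the uniform $\rho[m+n]$, after which linearity of $\st$ and bilinearity of $\star_\lambda$ let me collect all the intermediate words $w$ into the single expression $(\sigma\star_\lambda\tau[m])\star_\lambda\rho[m+n]$. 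The parallel argument on the right needs, in addition, the shape-invariance statement above to replace $\st(v)[m+n]$ by $v[m+n]$ inside $\st(\sigma\star_\lambda\,\cdot\,)$, and then a further application of shift-invariance to match $\tau[m+n]\star_\lambda\rho[m+2n]$ with $\tau[m]\star_\lambda\rho[m+n]$ under $\st$. Once these alignments are in place, the common value $\st(\sigma\star_\lambda\tau[m]\star_\lambda\rho[m+n])$ follows.
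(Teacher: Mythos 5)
Your proposal is correct and follows essentially the same route as the paper: both sides of the associativity identity are reduced, via the shift-invariance of Remark~\ref{rem:prodstarstwpshiftlarg} and the observation that $\st$ commutes past an inner pre-standardization, to $\st\bigl((\sigma\star_\lambda\tau[m])\star_\lambda\rho[m+n]\bigr)$, after which associativity of $\star_\lambda$ from Theorem~\ref{thm:quasishuffleprodweitlambda} finishes the argument. The only difference is that you also spell out the unit axiom, which the paper takes for granted.
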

\begin{proof}
It suffices to show that $\overline{\star}_\lambda$ is associative. To this end, it is enough to show that
$(\pi\overline{\star}_\lambda\sigma)\overline{\star}_\lambda\tau=\pi\overline{\star}_\lambda(\sigma\overline{\star}_\lambda\tau)$
for all signed permutations $\pi\in \mathfrak{B}_m,\sigma\in \mathfrak{B}_n,\tau\in \mathfrak{B}_p$.
By Eq.~\eqref{eq:defnsigmacdotlambdatau}, we have
$$(\pi\overline{\star}_\lambda\sigma)\overline{\star}_\lambda\tau=\st(\pi\star_\lambda\sigma[m])\overline{\star}_\lambda\tau.$$ Note that the signed permutations in $\st(\pi\star_\lambda\sigma[m])$
have length at most $m+n$. So by Remark \ref{rem:prodstarstwpshiftlarg}, we have
$$(\pi\overline{\star}_\lambda\sigma)\overline{\star}_\lambda\tau=\st(\st(\pi\star_{\lambda}\sigma[m])\star_{\lambda}\tau[m+n])=\st((\pi\star_{\lambda}\sigma[m])\star_{\lambda}\tau[m+n]).$$
On the other hand, we have
\begin{align*}
\pi\overline{\star}_\lambda(\sigma\overline{\star}_\lambda\tau)=&\pi\overline{\star}_\lambda\st(\sigma\star_{\lambda}\tau[n])=\st(\pi\star_{\lambda}\st(\sigma\star_{\lambda}\tau[n])[m])\\
=&\st(\pi\star_{\lambda}\st(\sigma[m]\star_{\lambda}\tau[m+n]))=\st(\pi\star_{\lambda}(\sigma[m]\star_{\lambda}\tau[m+n])),
\end{align*}
and the associativity of $\overline{\star}_\lambda$ follows from that of $\star_\lambda$ guaranteed by Theorem \ref{thm:quasishuffleprodweitlambda}.
\end{proof}

Define the shifted deconcatenation coproduct $\Delta$ as in Eq.~(\mref{eq:delta}):
\begin{equation}\label{eq:Delta(sigma)WG}
\Delta=(\st\otimes\st)\delta:\HSym\longrightarrow \HSym\otimes\HSym, \quad \sigma\mapsto \sum_{p=0}^m\st(\sigma_1\cdots\sigma_p)\otimes\st(\sigma_{p+1}\cdots\sigma_m)
\end{equation}
for any $\sigma=\sigma_1\sigma_2\cdots\sigma_m\in \mathfrak{B}_m$. For example,
$$
\Delta(3{2}^{-}14{5}^{-})=\imath\otimes 3{2}^{-}14{5}^{-}+1\otimes {2}^{-}13{4}^{-}+2{1}^{-}\otimes 12{3}^{-}+3{2}^{-}1\otimes 1{2}^{-}+3{2}^{-}14\otimes{1}^{-}+3{2}^{-}14{5}^{-}\otimes \imath.
$$
This coproduct is obviously coassociative. Also define a counit
\begin{align}\label{HSymepde}
\epsilon:\HSym\rightarrow \bfk, \quad \epsilon(\sigma):=\begin{cases}
                    1, &\text{if}\  \hbox{$\sigma=\imath$,} \\
                    0, & \text{otherwise.}
                   \end{cases}
\end{align}

\begin{theorem}\label{thm:WSisahopfalg}
For any $\lambda\in\bfk$, the quadruple $(\HSym, \overline{\star}_{\lambda},\Delta,\epsilon)$ is a Hopf algebra, called the \name{Hopf algebra of signed permutations} with weight $\lambda$. It contains the Malvenuto-Reutenauer Hopf algebra $\SSym$ as a Hopf subalgebra.

\end{theorem}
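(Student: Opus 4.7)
The algebra structure and unit axiom are already established in the preceding proposition, and the counit axiom for $\epsilon$ is transparent from \eqref{HSymepde}. For coassociativity of $\Delta$, I would rely on the elementary identity $\st(\st(w)_{[1,p]})=\st(w_{[1,p]})$ together with its suffix analogue, which holds because $\st$ records only the signs and the relative order of letters. Both $(\Delta\otimes\id)\Delta(\pi)$ and $(\id\otimes\Delta)\Delta(\pi)$ then collapse to the same triple sum
$\sum_{0\le q\le p\le m}\st(\pi_{[1,q]})\otimes\st(\pi_{[q+1,p]})\otimes\st(\pi_{[p+1,m]}).$

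The real work is the bialgebra identity $\Delta(\sigma\,\overline{\star}_\lambda\tau)=\Delta(\sigma)\,\overline{\star}_\lambda\Delta(\tau)$ for $\sigma\in\mathfrak{B}_m$, $\tau\in\mathfrak{B}_n$. My plan is to lift the computation to the Hopf algebra $(\bfk\langle A\rangle,\star_\lambda,\delta)$ with $A=-\PP\cup\PP$ furnished by Theorem~\ref{thm:quasishuffleprodweitlambda}, and to exploit two pattern-level lemmas.
Lemma (i): on $\bfk\langle A\rangle$, $(\st\otimes\st)\circ\delta\circ\st=(\st\otimes\st)\circ\delta$.
Lemma (ii): if $u,v$ are words whose absolute-value supports are disjoint and every letter of $v$ has absolute value strictly larger than every letter of $u$, then $\st(u\star_\lambda v)=\st\bigl(\st(u)\star_\lambda\st(v)[\ell(u)]\bigr)$.
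Using (i), the definition of $\overline{\star}_\lambda$, and the fact that $\delta\circ\st$ differs from $(\st\otimes\st)\delta$ only by a harmless re-standardization, we get $\Delta(\sigma\,\overline{\star}_\lambda\tau)=(\st\otimes\st)\delta(\sigma\star_\lambda\tau[m])$. Then invoking the bialgebra compatibility $\delta(\sigma\star_\lambda\tau[m])=\delta(\sigma)\star_\lambda\delta(\tau[m])$ in $\bfk\langle A\rangle$ expands this as a double sum indexed by cut points $p$ of $\sigma$ and $q$ of $\tau[m]$; finally, applying (ii) separately to each tensor factor matches it term-by-term with $\Delta(\sigma)\,\overline{\star}_\lambda\Delta(\tau)$.
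The main obstacle I anticipate is the clean proof of (ii): one must verify that the recursion \eqref{eq:inductquasishuffleprod}, together with the contraction $\bullet$ of \eqref{eq:diamondsp1}, depends only on the signs of the letters and their within-factor relative orders, so that pre-standardization and post-standardization commute up to the prescribed shift by $\ell(u)$. Some care is also needed because the cut points force the shift in (ii) to be $p$ on the first factor and $m-p$ on the second, so the bookkeeping between $\tau[m]_{[1,q]}$, $\st(\tau_{[1,q]})[p]$ and their suffix counterparts must be traced explicitly.

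Once the bialgebra axioms are in hand, the Hopf algebra property is automatic: $\HSym=\bigoplus_{n\ge 0}\bfk\mathfrak{B}_n$ is a length-graded bialgebra with $\HSym_0=\bfk\imath$ connected, so the antipode is built by the standard Takeuchi recursion (cf.\ \cite{GG19,Man08}). Finally, for $\SSym\hookrightarrow\HSym$ to be a Hopf subalgebra, observe that on an element of $\SSym$ every letter is positive, so the contraction $\bullet$ of \eqref{eq:diamondsp1} vanishes identically; thus $\overline{\star}_\lambda$ reduces to the shifted shuffle $\overline{\shap}$ defined for $\SSym$, and $\Delta$ plainly restricts to the Malvenuto--Reutenauer deconcatenation \eqref{eq:delta}.
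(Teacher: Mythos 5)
Your proposal is correct and follows essentially the same route as the paper: reduce the compatibility $\Delta(\sigma\,\overline{\star}_\lambda\tau)=\Delta(\sigma)\,\overline{\star}_\lambda\Delta(\tau)$ to the bialgebra identity $\delta(u\star_\lambda v)=\delta(u)\star_\lambda\delta(v)$ in $(\bfk\langle A\rangle,\star_\lambda,\delta)$ from Theorem~\ref{thm:quasishuffleprodweitlambda}, using that $\Delta\circ\st=\Delta$ (your Lemma (i)) and that standardization commutes with the quasi-shuffle of words with separated absolute-value supports up to the shift allowed by Remark~\ref{rem:prodstarstwpshiftlarg} (your Lemma (ii)), then conclude via connectedness of the cograded bialgebra. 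Your treatment of coassociativity and of the embedding of $\SSym$ (via the vanishing of $\bullet$ on positive letters) only makes explicit what the paper asserts briefly.
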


\begin{proof}
We first show that $(\HSym, \overline{\star}_{\lambda},\Delta)$ is a bialgebra. It suffices to show that $\Delta$ and $\epsilon$ are $\overline{\star}_{\lambda}$-preserving.
This is trivial for $\epsilon$. To show the statement for $\Delta$, we first observe that for any word $\pi$ on $-\PP\cup {\PP}$, we have
$\Delta(\st(\pi))=\Delta(\pi)$. By Theorem \ref{thm:quasishuffleprodweitlambda}, $\delta$ is a $\star_{\lambda}$-homomorphism. Then for any $\sigma\in \mathfrak{B}_m$ and $\tau\in \mathfrak{B}_n$,
we have
\begin{eqnarray*}
&&
\Delta(\sigma\overline{\star}_{\lambda} \tau)\\
&=&\Delta(\st(\sigma\star_{\lambda} \tau[m]))\\
&=&\Delta(\sigma\star_{\lambda} \tau[m])\\
&=&(\st\otimes\st)\delta(\sigma\star_{\lambda} \tau[m])\\
&=&(\st\otimes\st)(\delta(\sigma)\star_{\lambda} \delta(\tau[m]))\hspace{62mm}\text{(by Theorem~\ref{thm:quasishuffleprodweitlambda})}\\
&=& \sum_{p=0}^m\sum_{q=0}^n\st(\sigma_1\cdots \sigma_p\star_{\lambda}(\tau_1\cdots \tau_q)[m])\otimes \st(\sigma_{p+1}\cdots \sigma_m\star_{\lambda}(\tau_{q+1}\cdots \tau_n)[m]) \quad\text{(by Eq. \eqref{eq:deltacohp})}\\
&=& \sum_{p=0}^m\sum_{q=0}^n\st(\st(\sigma_1\cdots \sigma_p)\star_{\lambda}\st(\tau_1\cdots \tau_q)[m])\otimes
\st(\st(\sigma_{p+1}\cdots \sigma_m)\star_{\lambda}\st(\tau_{q+1}\cdots \tau_n)[m]) \\
&&\hspace{105mm}\text{(by the definition of}\ \st)\\
&=& \sum_{p=0}^m\sum_{q=0}^n(\st(\sigma_1\cdots \sigma_p)\overline{\star}_{\lambda}\st(\tau_1\cdots \tau_q))\otimes
(\st(\sigma_{p+1}\cdots \sigma_m)\overline{\star}_{\lambda}\st(\tau_{q+1}\cdots \tau_n))\quad \text{(by Eq.~\eqref{eq:defncdotweakperm-1})}\\
&=&(\st\otimes\st)(\delta(\sigma))\overline{\star}_{\lambda} (\st\otimes\st)(\delta(\tau))\hspace{55mm}\text{(by Eq. \eqref{eq:deltacohp})}\\
&=&\Delta(\sigma)\overline{\star}_{\lambda} \Delta(\tau).
\end{eqnarray*}
This shows that $\Delta$ is a $\overline{\star}_{\lambda}$-homomorphism. Hence $\HSym$ is a bialgebra.

Let $\HSym_n=\bfk \mathfrak{B}_n, n\geq 0$. Then $\HSym=\bigoplus_{n\geq0}\HSym_n$ is a cograded coalgebra, that is,
$$
\Delta(\HSym_n)\subseteq\bigoplus_{p+q=n}\HSym_p\otimes \HSym_q \ \text{for}\ n\geq0,\ \bigoplus_{n\geq1}\HSym_n\subseteq \ker\varepsilon.
$$
Note that $\HSym_0=\bfk \imath$, so $(\HSym, \overline{\star}_{\lambda},\Delta)$ is a bialgebra such that $(\HSym, \Delta,\varepsilon)$ is a connected cograded coalgebra.
By~\cite{GG19,Man08},  $\HSym$ is a Hopf algebra.

The restriction of $\overline{\star}_\lambda$ to permutations is the shifted shuffle product and the coproduct $\Delta$ on signed permutations extends the one on permutations, proving the second statement.
\end{proof}

\section{Relationship among the Hopf algebras}
\label{sec:BaP}
In this section, we provide surjective Hopf algebra homomorphisms $\varphi_2$ and $\mathcal{D}_2$ from the Hopf algebra $\HSym$ of weak permutations to the Hopf algebra $\RQSym$ of
weak quasi-symmetric functions and the Hopf algebra $\SSym$ of permutations respectively, so that the commutativity of the diagram \eqref{eq:diag2} holds.
We begin by providing background on quasi-symmetric and weak quasi-symmetric functions. See~\cite{BB05,LMW13,Mac95,Sta12,YGT} for further details.

\subsection{Weak compositions and regularized compositions}

Let $\widetilde{\NN}:=\mathbb{N}\cup \{\varepsilon\}$ be the additive monoid obtained from the additive monoid $\NN$ by adjoining a new element $\varepsilon$ such that
$$0+\varepsilon=\varepsilon+0=\varepsilon+\varepsilon=\varepsilon,\ n+\varepsilon=\varepsilon+n=n\quad \text{for all } n\geq1.$$
For convenience we extend the natural order on $\NN$ to $\widetilde{\NN}$ by defining $0<\varepsilon<1$.

Given an element $n\in\widetilde{\mathbb{N}}$, a \name{weak $\widetilde{\mathbb{N}}$-composition} $\alpha=(\alpha_1,\alpha_2,\cdots,\alpha_k)$ of $n$ is an ordered finite sequence of elements of $\widetilde{\mathbb{N}}$ whose sum is $n$.
We call $\alpha_i$ the \name{parts} of $\alpha$, $n$ the \name{weight} of $\alpha$,
denoted by $|\alpha|$, and $k$ the \name{length} of $\alpha$, denoted by $\ell(\alpha)$.
In particular, we call $\alpha$ a \name{composition} if all of its parts are positive, a \name{weak composition} if all of its parts are nonnegative, a \name{regularized composition} if all of its parts are either positive or $\vep$, regarded as the ``regularization" of a weak composition when the zero parts of the weak composition are replaced by $\vep$.
We write $\alpha\models n$ if $\alpha$ is a composition of weight $n$.

For example, 6 has a composition $(1,2,3)$, a weak composition $(1,0,2,3)$, a regularized composition $(1,2,\varepsilon,3)$ and a weak $\widetilde{\mathbb{N}}$-composition $(1,0,2,\varepsilon,3)$.
For convenience, we denote by $\emptyset$ the unique weak $\widetilde{\mathbb{N}}$-composition, called the \name{empty composition}, whose weight and length are designated to be $0$.
Given $n$ in $\widetilde{\mathbb{N}}$, let $\mathcal{C}(n)$ and $\widetilde{\mathcal{C}}(n)$ denote the set of compositions and regularized compositions of $n$, respectively.
We write $\mathcal{C}$, $\mathcal{WC}$ and $\widetilde{\mathcal{C}}$ for the sets of compositions, weak compositions and
regularized compositions, respectively. Clearly, we have $\mathcal{C}=\mathcal{WC}\cap \widetilde{\mathcal{C}}$.

Replacing a weak composition $\alpha$ by its regularized composition noted above starts with the map
\begin{align*}
\theta: \NN\longrightarrow \widetilde{\NN}\backslash \{0\}, \quad
n\mapsto
\left\{
\begin{array}{ll}
\vep, & \text{if}\  n=0, \\ n, & \text{otherwise},
\end{array}
\right.
\end{align*}
and is given by the natural bijection
\begin{align*}
\hat{\theta}:\mathcal{WC} \longrightarrow  \widetilde{\mathcal{C}},\quad
\alpha \mapsto
\left\{
\begin{array}{ll}
\emptyset, & \text{if}\ \alpha=\emptyset,\\
(\theta(\alpha_1),\cdots, \theta(\alpha_k)), &\text{if}\  \alpha=(\alpha_1,\cdots,\alpha_k)\in\mathcal{WC}, k\geq 1.
\end{array}
\right.
\end{align*}
This enables us to define weak quasi-symmetric functions in Section~\ref{subsection:defnHopfalgQWQSs}.

We write $\varepsilon^i$ for a string of $i$ parts of $\varepsilon$.
Then any regularized composition $\alpha$  can be expressed uniquely in the form
\begin{align*}
\alpha=(\varepsilon^{i_1},s_1,\varepsilon^{i_2},s_2,\cdots,\varepsilon^{i_k}, s_k,\varepsilon^{i_{k+1}}),
\end{align*}
where $k\in \NN$, $i_p\in \NN$ for $p\in[k+1]$ and $s_q\in \PP$ for $q\in[k]$.
Denote by $\bar{\alpha}$ the composition obtaining from the regularized composition $\alpha$ by omitting its $\varepsilon$ parts.
Then $\bar{\alpha}=(s_1,s_2,\cdots,s_k)$. Further, for the length and weight of $\alpha$, we have
$$\ell(\alpha)=k+i_1+i_2+\cdots+i_{k+1},\ |\alpha|=\left\{\begin{array}{ll} 0, & \text{if}\ k=i_1=0,\\
\varepsilon, &\text{if}\ k=0, i_1\ge1,\\
s_1+s_2+\cdots+s_k, &\text{if}\  k>0. \end{array}\right.$$
We call $\ell_{\varepsilon}(\alpha):=i_1+i_2+\cdots+i_{k+1}$ the \name{$\varepsilon$-length} of $\alpha$ and  $||\alpha||:=|\alpha|+\ell_{\varepsilon}(\alpha)$
the \name{total weight} of $\alpha$.
Clearly, $|\alpha|=||\alpha||$ if and only if $\alpha$ is a composition.
The \name{descent set} of $\alpha$ is the subset of $[||\alpha||]$ defined by
\begin{align*}
    D({\alpha}):=\left\{\left. b_q:=\sum_{j=1}^q(i_j+s_j)\right| q\in [k]\right\}.
\end{align*}
Note that  $||\alpha||\in D(\alpha)$ if $i_{k+1}=0$, and $||\alpha||\notin D(\alpha)$ otherwise.
Thus, the integer $||\alpha||=|\alpha|$ is in the descent set if $\alpha$ is a composition, which is different from the usual notation in the literature~\cite{Sta12}.
This is because we will need to use the total weight $||\alpha||$ to determine whether the last part of $\alpha$ is $\varepsilon$ or a positive integer, when $\alpha$ is a regularized composition.

Any composition $\alpha=(\alpha_1,\alpha_2,\cdots,\alpha_k)$ of $n$ naturally corresponds to its descent set
$D(\alpha)=\{\alpha_1,\alpha_1+\alpha_2,\cdots,\alpha_1+\alpha_2+\cdots+\alpha_{k-1}, n\}$.
Note that this is an invertible procedure; that is, if $S=\{a_1<a_2<\cdots<a_k<n\}$ is a subset of $[n]$, then the composition ${\rm comp}(S):=(a_1,a_2-a_1,\cdots,n-a_k)$ is precisely the composition
such that $D({\rm comp}(S))=S$.

Let $\alpha$ and $\beta$ be regularized compositions of weight $n$. By grouping adjacent positive parts into blocks, we might write
$\alpha=(\vep^{i_1},\alpha_1,\vep^{i_2},\alpha_2,\cdots,\vep^{i_k}, \alpha_k, \vep^{i_{k+1}})$ and
$\beta=(\vep^{j_1},\beta_1,\vep^{j_2},\beta_2,\cdots,\vep^{j_\ell}, \beta_\ell, \vep^{j_{\ell+1}})$, where $i_p, j_q\in \NN$ for $p\in [k+1], q\in [\ell+1]$ and $\alpha_p, \beta_q$ are compositions for $p\in [k], q\in [\ell]$.
We say that $\alpha$ is a \name{refinement} of $\beta$ (or equivalently $\beta$ is a \name{coarsening} of $\alpha$),
denoted by $\alpha\preceq\beta$, if $k=\ell$, $D(\alpha_p)\supseteq D(\beta_p)$, $i_p\leq j_p$ for all $p\in[k]$ and either $i_{k+1}=j_{k+1}=0$ or $1\leq i_{k+1}\leq j_{k+1}$.
For example, $(1,2,\varepsilon^2,1,3,2,\varepsilon)\preceq(3,\varepsilon^2,1,\varepsilon,5,\varepsilon^3)$, but
$(1,2,\varepsilon^2,1,3,2)$ and $(3,\varepsilon^2,1,\varepsilon,5,\varepsilon^3)$ are not comparable.
Restricting $\preceq$ to the subset $\mathcal{C}(n)$, we obtain the refinement order on compositions.

Given a regularized composition $\alpha$, the \name{ reversal} of $\alpha$, denoted by $\alpha^{r}$, is obtained by writing the entries of $\alpha$ in the reverse order.
For regularized compositions $\alpha=(\alpha_1,\alpha_2,\cdots,\alpha_k)$ and $\beta=(\beta_1,\beta_2,\cdots,\beta_l)$,
the \name{concatenation} of $\alpha$ and $\beta$ is
$\alpha\cdot \beta:=(\alpha_1,\cdots,\alpha_k,\beta_1,\cdots,\beta_l)$,
while if (and only if) $\alpha_k\in\mathbb{P}$ and $\beta_1\in\mathbb{P}$, then the \name{near concatenation} is
$\alpha\odot \beta:=(\alpha_1,\cdots,\alpha_k+\beta_1,\cdots,\beta_l)$.
Let $J=(j_1,j_2,\cdots,j_l)$ be a composition of $k$.
The regularized composition $J[\alpha]$ of $\alpha$ is defined by
\begin{align*}
J[\alpha]:=(\alpha_1+\cdots+\alpha_{j_1},\alpha_{j_1+1}+\cdots+\alpha_{j_1+j_2},\cdots,\alpha_{j_1+j_2+\cdots+j_{l-1}+1}+\cdots+\alpha_{k}).
\end{align*}
For example, take $\alpha=(3,1,\varepsilon)$, $\beta=(2,\varepsilon)$ and $J=(1,2)$. Then $\alpha^r=(\varepsilon,1,3)$,  $J[\alpha]=(3,1)$,
$\alpha\cdot\beta=(3,1,\varepsilon,2,\varepsilon)$ and $\alpha^r\odot\beta=(\varepsilon,1,5,\varepsilon)$, while $\alpha\odot\beta$ is not defined.

\subsection{Weak quasi-symmetric functions} \label{subsection:defnHopfalgQWQSs}
This subsection recalls the Hopf algebras $\RQSym$ of weak quasi-symmetric functions, while refers the reader to~\cite{YGT} for further details.

We begin with generalizing the formal power series algebra.
Let $X=\{x_n\,|\,n\in \PP\}$ be a set of commuting variables.
A formal power series is a (possibly infinite) linear combination of monomials $x_{i_1}^{\alpha_1}x_{i_2}^{\alpha_2}\cdots x_{i_k}^{\alpha_k}$ where $\alpha_1,\alpha_2,\cdots,\alpha_k$ are positive integers, which can be regarded as the locus of the map from $X$ to $\NN$ sending $x_{i_j}$ to $\alpha_j$, $1\leq j\leq k$, and everything else in $X$ to zero.
Our generalization of the formal power series algebra is simply to replace $\NN$ by $\widetilde{\NN}$.

The set of {\bf $\widetilde{\NN}$-valued maps} is defined by
\begin{equation*}
\map{X}{\widetilde{\NN}}:=\left\{\left. f:X\to \widetilde{\NN}\,\right |\, \supp (f) \text{ is finite }\right\},
\end{equation*}
where $\supp(f):=\{x\in X\,|\, f(x)\neq 0\}$ denotes the support of $f$.
The addition on $\widetilde{\NN}$ equips $\map{X}{\widetilde{\NN}}$ with an addition by
$$ (f+g)(x):=f(x)+g(x)\quad  \text{ for all } f, g\in \widetilde{\NN}^X \text{ and } x\in X,$$
making $\map{X}{\widetilde{\NN}}$ into an additive monoid.
Resembling the formal power series, we identify $f\in \map{X}{\widetilde{\NN}}$ with its locus $\{(x,f(x))\,|\,x\in \supp(f)\}$ expressed in the form of a formal product, called an {\bf $\widetilde{\NN}$-exponent monomial}:
\begin{align*}
X^f:=\prod_{x\in X}x^{f(x)}=\prod_{x\in \supp(f)}x^{f(x)},
\end{align*}
with the convention $x^0=1$. So each $\widetilde{\NN}$-exponent monomial can be written as $x_{i_1}^{\alpha_1}x_{i_2}^{\alpha_2}\cdots x_{i_k}^{\alpha_k}$ where
$\alpha=(\alpha_1,\alpha_2,\cdots,\alpha_k)$ is a regularized composition.

With this notation, the (multiplicative) addition on $\map{X}{\widetilde{\NN}}$ is simply
\begin{equation*}
X^fX^g=X^{f+g}\quad \text{ for all } f, g\in \map{X}{\widetilde{\NN}}.
\mlabel{eq:xmap}
\end{equation*}
We then form the semigroup algebra
$\bfk [X]_{\widetilde{\NN}}:=\bfk \map{X}{\widetilde{\NN}}$
consisting of linear combinations of  $\widetilde{\NN}$-exponent monomials, called the algebra of \name{$\widetilde{\NN}$-exponent polynomials}.
Similarly, we can define the $\bfk$-module $\bfk[[X]]_{\widetilde{\NN}}$ consisting of possibly infinite linear combinations of  $\widetilde{\NN}$-exponent monomials,
called {\bf $\widetilde{\NN}$-exponent formal power series}.

Let $\alpha=(\alpha_1,\alpha_2,\cdots,\alpha_k)$ be a regularized composition. We call $|\alpha|$ and $||\alpha||$ the
 \name{degree} and \name{ total degree} of the $\widetilde{\NN}$-exponent monomial  $x_{i_1}^{\alpha_1}x_{i_2}^{\alpha_2}\cdots x_{i_k}^{\alpha_k}$, respectively.
Analogously, the degree (resp. total degree) of an $\widetilde{\NN}$-exponent  power series $f$ is the largest $|\alpha|$  (resp. $||\alpha||$) such that
the coefficient of $x_{i_1}^{\alpha_1}x_{i_2}^{\alpha_2}\cdots x_{i_k}^{\alpha_k}$ in $f$ is not zero.

\begin{defn}\label{def:bquasisymm}
A formal power series $f$ of bounded total degree in $\bfk [[X]]_{\widetilde{\NN}}$  is called a \name{ weak quasi-symmetric function} if, for any regularized composition
$(\alpha_1,\alpha_2,\cdots,\alpha_k)$, the coefficients of
$x_{i_1}^{\alpha_1}x_{i_2}^{\alpha_2}\cdots x_{i_k}^{\alpha_k}$
and $x_{j_1}^{\alpha_1}x_{j_2}^{\alpha_2}\cdots x_{j_k}^{\alpha_k}$ in $f$ are equal for every two increasing sequences $i_1<i_2<\cdots<i_k$ and $j_1<j_2<\cdots<j_k$ of positive integers.
\end{defn}

The set of weak quasi-symmetric functions is a
subalgebra of $\bfk [[X]]_{\widetilde{\NN}}$ denoted by  $\RQSym(X)$, or $\RQSym$ for short, replacing the notion $\QSym_{\widetilde{\NN}}$ used in~\cite{YGT}. The prefix R stands for the ``regularization" $x^\vep$ in place of $x^0$. For this reason, we also call the weak quasi-symmetric functions the {\bf regularized quasi-symmetric functions}.

Rota suggested that Rota-Baxter algebras provide an ideal context for generalizations of symmetric functions~\cite{Ro95}. Motivated by this suggestion, we obtained

\begin{theorem} $($\cite[Theorem~4.4]{YGT}$)$
Let $X=\{x_n\,|\, n\in \PP\}$ be as above and let $y$ be a letter not in $X$. The tensor product algebra $\bfk[y]\otimes \RQSym$ is isomorphic to the free commutative Rota-Baxter algebra on $\{y\}$ constructed in~\cite{G-K1}.
\end{theorem}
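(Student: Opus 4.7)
Since the statement is a recall of \cite[Theorem~4.4]{YGT}, the plan is to indicate the strategy employed there. The approach is to produce an explicit $\bfk$-algebra isomorphism between the free commutative Rota--Baxter algebra $\RR(y)$ on the singleton $\{y\}$ (in the Guo--Keigher construction \cite{G-K1}) and the tensor product $\bfk[y]\otimes \RQSym$, and then check that it intertwines the Rota--Baxter operator on $\RR(y)$ with a natural Rota--Baxter operator on $\bfk[y]\otimes \RQSym$.

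First, I would recall that $\RR(y)$ is realized, via the mixable-shuffle construction of \cite{G-K1}, as $\bfk[y]\otimes \mathrm{MSh}^+(\bfk[y])$, where $\mathrm{MSh}^+(\bfk[y])=\bigoplus_{n\geq 1}\bfk[y]^{\otimes n}$ is equipped with the mixable-shuffle (i.e.\ commutative quasi-shuffle) product. A $\bfk$-basis of $\RR(y)$ is then given by pure tensors
\[
y^{a_0}\otimes (y^{a_1}\otimes y^{a_2}\otimes\cdots\otimes y^{a_k}),\qquad a_0\geq 0,\ a_i\geq 0,\ k\geq 0.
\]
The key combinatorial observation is that the tuple $(a_1,\dots,a_k)$ is precisely a weak composition, and hence via the bijection $\hat{\theta}:\mathcal{WC}\to\widetilde{\mathcal C}$ it corresponds to a regularized composition $\alpha\in\widetilde{\mathcal C}$, which in turn labels a monomial weak quasi-symmetric function $M_\alpha\in\RQSym$.

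Second, I would define the candidate isomorphism
\[
\Phi:\RR(y)\longrightarrow \bfk[y]\otimes \RQSym,\qquad y^{a_0}\otimes(y^{a_1}\otimes\cdots\otimes y^{a_k})\longmapsto y^{a_0}\otimes M_{\hat\theta(a_1,\dots,a_k)},
\]
and verify that $\Phi$ is a $\bfk$-linear isomorphism on the basis level. The main technical step is to show that $\Phi$ is multiplicative. On the left-hand side, the product is given by the mixable-shuffle formula, which (as in the proof of Theorem~\ref{thm:quasishuffleprodweitlambda}) is a quasi-shuffle built from concatenation, shuffling, and ``merging'' of adjacent tensor factors via the product of $\bfk[y]$; on the right-hand side, the product of two monomial weak quasi-symmetric functions $M_\alpha M_\beta$ expands as a sum over overlapping shuffles of $\alpha$ and $\beta$ regarded as regularized compositions. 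Matching these two expansions term by term, using the rule $y^a\cdot y^b=y^{a+b}$ on the mixable-shuffle side and the additive structure on parts (including the rules $0+\vep=\vep$, $n+\vep=n$ for $n\ge 1$) of the $\widetilde\NN$-composition side, is the main obstacle.

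Finally, one checks that the canonical Rota--Baxter operator on $\RR(y)$, which shifts an element into a new leftmost tensor slot, corresponds under $\Phi$ to the operator on $\bfk[y]\otimes\RQSym$ that prepends an $\varepsilon$-part (when the element comes from $\RQSym$) or adjusts the leading $\bfk[y]$-factor accordingly. The Rota--Baxter identity of weight $1$ on this operator then follows directly from the $\vep$-addition rules. Combined with the universal property of $\RR(y)$, this produces the inverse map and completes the proof. The heart of the argument is the compatibility of the two product structures explained in the previous paragraph; all other steps are bookkeeping once the basis correspondence via $\hat\theta$ is in place.
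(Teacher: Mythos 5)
Your plan matches the argument of the cited proof in \cite{YGT} (the paper itself only recalls this theorem rather than reproving it): one identifies the basis $y^{a_0}\otimes(y^{a_1}\otimes\cdots\otimes y^{a_k})$ of the mixable-shuffle realization of the free commutative Rota--Baxter algebra with $y^{a_0}\otimes M_{\hat\theta(a_1,\dots,a_k)}$, and multiplicativity follows because $\theta$ is an isomorphism of additive monoids from $\NN$ onto $\widetilde\NN\setminus\{0\}$, so the weight-one mixable shuffle on $\bfk[y]^{\otimes n}$ transports exactly to the quasi-shuffle product $*$ of regularized compositions governing $M_\alpha M_\beta$. The only blemish is the indexing $\bigoplus_{n\geq 1}$ in your description of $\mathrm{MSh}^+(\bfk[y])$, which should include the $n=0$ summand $\bfk$ to account for the terms with $k=0$ (corresponding to $M_\emptyset=1$), as your own basis listing with $k\geq 0$ already implicitly does.
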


For a regularized composition $(\alpha_1,\alpha_2,\cdots,\alpha_\ell)$,
the \name{monomial weak quasi-symmetric function } indexed by $\alpha$ is the formal power series
\begin{align*}
M_{\mathbf{\alpha}}:=\sum_{j_1<j_2<\cdots <j_\ell} x_{j_1}^{\alpha_1}x_{j_2}^{\alpha_2}\cdots x_{j_\ell}^{\alpha_\ell}.
\end{align*}

Write $\alpha=(\varepsilon^{i_1},s_1,\varepsilon^{i_2},s_2,\cdots,\varepsilon^{i_k},s_k, \varepsilon^{i_{k+1}})$,
where $k\in\mathbb{N}$,  $i_p\in\mathbb{N}$ for $p\in[k+1]$ and $s_q\in\mathbb{P}$ for $q\in[k]$.
Let $D(\alpha)=\{b_1,b_2,\cdots,b_{k}\}$ denote the descent set of $\alpha$. So $b_q=\sum_{j=1}^q(i_j+s_j)$ for $q\in[k]$.
Then the \name{fundamental weak quasi-symmetric function}
indexed by $\alpha$ is the formal power series
\begin{align}\label{eqdefn:fundmbasis1}
F_{\alpha}:=&\sum_{{j_1\leq j_2\leq \cdots \leq j_{n} \atop p\in D(\alpha) \Rightarrow j_{p}<j_{p+1}}}\prod_{t=1}^{i_1} x_{j_t}^\varepsilon\cdot\prod_{t=i_1+1}^{b_1} x_{j_t} \cdot
\prod_{t=b_1+1}^{b_1+i_2} x_{j_t}^\varepsilon\cdot\prod_{t=b_1+i_1+1}^{b_2} x_{j_t}\cdots \prod_{t=b_{k-1}+1}^{b_{k-1}+i_{k}} x_{j_t}^\varepsilon\cdot\prod_{t=b_{k-1}+i_{k}+1}^{b_k} x_{j_t}
\cdot \prod_{t=b_{k}+1}^{b_{k}+i_{k+1}} x_{j_t}^\varepsilon\notag\\
=&\sum_{{j_1\leq j_2\leq \cdots \leq j_{n} \atop p\in D(\alpha) \Rightarrow j_{p}<j_{p+1}}}
x_{j_1}^\varepsilon\cdots x_{j_{i_1}}^\varepsilon x_{j_{i_1+1}}\cdots x_{j_{b_1}}
\cdots x_{j_{b_{k-1}+1}}^\vep \cdots x_{j_{b_{k-1}+i_k}}^\varepsilon x_{j_{b_{k-1}+i_k+1}}\cdots x_{j_{b_k}}x_{j_{b_{k}+1}}^\varepsilon\cdots x_{j_{n}}^\varepsilon.
\end{align}
Let $M_{\emptyset}=F_{\emptyset}=1$. Then $\{M_{\alpha}\,|\,\alpha\in \widetilde{\mathcal{C}}\}$ and  $\{F_{\alpha}\,|\,\alpha\in \widetilde{\mathcal{C}}\}$ are linear bases for $\RQSym$.
Also, by~\cite[Propositions 3.1 and 3.2]{YGT}, for any regularized composition $\alpha=(\varepsilon^{i_1},s_1,\cdots,\varepsilon^{i_k},s_k, \varepsilon^{i_{k+1}})$,
we have
\begin{align*}
    F_{\alpha}= \sum_{\beta\preceq\alpha}c_{\alpha,\beta} M_\beta \quad\ \text{and}  \quad M_\alpha=\sum_{\beta\preceq\alpha} (-1)^{\ell(\beta)-\ell(\alpha)}c_{\alpha,\beta} F_\beta,
\end{align*}
where $c_{\alpha,\beta}=\binom{i_1}{j_1}\cdots\binom{i_k}{j_k}\binom{i_{k+1}-1}{j_{k+1}-1}$ for any regularized composition $\beta=(\varepsilon^{j_1},\beta_1,\cdots,\varepsilon^{j_k},\beta_k,\varepsilon^{j_{k+1}})$ such that $\beta\preceq\alpha$,
with the convention that $\binom{-1}{-1}=1$.

Let $\RQSym_{n}$ denote the linear span of $\{M_{\alpha}\,|\,\alpha\in \widetilde{\mathcal{C}}(n)\}$ for $n\in \widetilde{\NN}$.
It follows from~\cite[Propositions 2.5]{YGT} that the vector space $\RQSym=\bigoplus_{n\in\widetilde{\NN}}\RQSym_{n}$ of weak
quasi-symmetric functions has the structure
of an $\widetilde{\NN}$-graded Hopf algebra.
The product is the ordinary multiplication of $\widetilde{\NN}$-exponent power series.
With respect to the monomial weak quasi-symmetric functions, the product is given by the quasi-shuffle product $*$~\cite{Ho00} of regularized compositions, which can be equivalently defined by the stuffle product~\cite{BBBL} or mixable shuffle product~\cite{G-K1}. See~\cite{Gub12}. More precisely, given regularized compositions
$\alpha=(\alpha_1,\alpha_2,\cdots,\alpha_k)$ and $\beta=(\beta_1,\beta_2,\cdots,\beta_l)$. Let $\alpha'=(\alpha_2,\cdots,\alpha_k)$ and $\beta'=(\beta_2,\cdots,\beta_l)$.
Define
\begin{align*}
    \alpha*\emptyset=\emptyset*\alpha:=\alpha,\quad \alpha*\beta:=(\alpha_1,\alpha'*\beta)+(\beta_1,\alpha*\beta')+(\alpha_1+\beta_1,\alpha'*\beta').
\end{align*}
For convenience, we define a $\bfk$-bilinear pair $\langle\cdot,\cdot\rangle$ on $\bfk\widetilde{C}$ satisfying $\langle\alpha,\beta\rangle=\delta_{\alpha,\beta}$.
Then
\begin{align*}
    M_{\alpha}M_{\beta}=\sum_{\gamma\in\widetilde{C}}\langle\gamma,\alpha*\beta\rangle M_{\gamma}.
\end{align*}
Thus, we will write $M_{\alpha}M_{\beta}=M_{\alpha*\beta}$, for simplicity.
The coproduct, counit and antipode are defined by
\begin{align}\label{deepSant}
    \Delta_{R}(M_{\alpha}):=\sum_{\alpha=\beta\cdot\gamma}M_{\beta}\otimes M_{\gamma}, \qquad \epsilon_{R}(M_\alpha):=\delta_{\alpha,\emptyset},\qquad
    S_{R}(M_\alpha):=(-1)^{\ell(\alpha)}\sum_{J\models \ell(\alpha)}M_{J[\alpha^r]}.
\end{align}
The coproduct formula of a fundamental weak quasi-symmetric function is given in~\cite{Li18} by
\begin{align}\label{eq:antipodeforwqsymfundam}
   \Delta_{R}(F_{\alpha}):=\sum_{\alpha=\beta\cdot\gamma}F_{\beta}\otimes F_{\gamma}+\sum_{\alpha=\beta\odot\gamma}F_{\beta}\otimes F_{\gamma}.
\end{align}
When $\alpha$'s are taken to be compositions, we obtain the Hopf subalgebra $\QSym$ of quasi-symmetric functions as usual.

\subsection{Commutativity of the square}
\label{Sec:commdiag}
We now extend the commutative diagram of Hopf algebras in Eq.~\eqref{eq:diag1} to the one in Eq.~\eqref{eq:diag2}, duplicated here for the reader's convenience.
\begin{equation}
\begin{split}
\xymatrix{
\HSym \atop \text{(signed permutations)} \ar@{-->>}[rr]^{\varphi_2}\ar@{-->>}[d]_{\mathcal{D}_2}^{\text{weak descents \&} \atop \text{signed } P-\text{partitions}}
&&\SSym \atop \text{(permutations)} \ar@{->>}[d]_{\mathcal{D}_1}^{\text{descents \&}\atop P\text{-partitions}} &&\ \NSym \ar@{>->}[ll]  \ar@{->>}[d] \\
  \RQSym \atop  \text{(weak compositions)} \ar@{->>}[rr]^{\varphi_1}
&&\QSym \atop  \text{(compositions)} && \ \text{Sym} \ar@{>->}[ll]
}
\mlabel{eq:diag2b}
\end{split}
\end{equation}
Here the Hopf algebra $\HSym$ is assume to have weight $-1$. Thus, the multiplication on $\HSym$ defined by Eq.~\eqref{eq:defnsigmacdotlambdatau}
becomes $\sigma \overline{\star}_{-1} \tau=\st(\sigma \star_{-1} \tau[m])$.

A signed permutation $\pi\in \mathfrak{B}_n$  has a \name{descent} at position $i\in[0,n-1]$ if $\pi_i>\pi_{i+1}$. Define the
 \name{descent set}  $\pi$ by
$D(\pi):=\{i\in[0,n-1]|\pi_i>\pi_{i+1}\}.$
The descents of signed permutations received significant attention over the past decades, see, for example~\cite{BB92,MR95,PK05,Sol76}.
In order to give the desired Hopf algebra homomorphism $\mathcal{D}_2$, we need a new notion as follows.
The \name{weak descent set}, denoted $WD(\pi)$,  of a signed permutation $\pi\in \mathfrak{B}_n$ is defined by
\begin{align}\label{eq:defn:descentofpi}
WD(\pi):=\begin{cases}
\{i\in [n-1]\,|\,\pi_{i}>\text{max}(0,\pi_{i+1})\},&\text{if}\  \pi_{n}< 0,\\
\{i\in [n-1]\,|\,\pi_{i}>\text{max}(0,\pi_{i+1})\}\cup \{n\},&\text{otherwise}.
\end{cases}
\end{align}
In particular, if $\pi\in\mathfrak{S}_n$ is a permutation, then $WD(\pi)=D(\pi)\cup\{n\}$.
For any permutation $\pi\in\mathfrak{S}_n$, define its \name{descent composition}, denoted $\rm{comp}(\pi)$, to be $\rm{comp}(WD(\pi))$.

As shown in~\cite[Th\'eor\`ems 5.12, 5.13, and 5.18]{Mal94}, there exists a Hopf algebra surjection
\begin{align}\label{eq:D1defn}
    \mathcal{D}_1:\SSym \rightarrow \QSym,\quad
                  \pi \mapsto F_{\text{comp}(\pi)}.
\end{align}
By~\cite[Theorem 3.8]{YGT}, there is a Hopf algebra surjection $\varphi_1:\RQSym\rightarrow \QSym$ defined by
\begin{align*}
    \varphi_1(M_{\alpha}):=\begin{cases}
                    (-1)^{\ell_\varepsilon(\alpha)}M_{\overline{\alpha}}, &\hbox{if the first part of $\alpha$ is a positive integer,} \\
                    \delta_{\alpha,\emptyset}, & \hbox{otherwise;}
                   \end{cases}
\end{align*}
where  $\delta_{\alpha,\emptyset}$ denotes the Kronecker delta. Equivalently, by~\cite[Section 4.2]{Li18},
\begin{align}\label{eq:varphi(Falpha)}
    \varphi_1(F_{\alpha}):=\begin{cases}
                    (-1)^{j}F_{\overline{\alpha}}, &\hbox{if $\alpha=(\varepsilon^i,\overline{\alpha},\varepsilon^j)$ for some $i\in\mathbb{N}$, $j\in\{0,1\}$ and $\overline{\alpha}\neq\emptyset$, } \\
                    \delta_{\alpha,\emptyset}, & \hbox{otherwise.}
                   \end{cases}
\end{align}

For a signed permutation $\pi$, we denote by $\overline{\pi}$ the word obtained from $\pi$ by omitting its negative parts.
For example, if $\pi=53^{-}246^{-}1^{-}$, then $\overline{\pi}=524$. Thus, a signed permutation $\pi$ satisfies $\pi=(\pi_1,\cdots,\pi_i,\overline{\pi},\pi_{n-j+1},\cdots,\pi_n)$ for some
nonnegative integers $i$ and $j$
if and only if $\pi_k<0$ for $k\in[i]\cup[n-j+1,n]$ and $\pi_k>0$ for $k\in[i+1,n-j]$.
Then we obtain a linear map $\varphi_2:\HSym\rightarrow \SSym$ by taking
\begin{align*}
    \varphi_2(\pi):=\begin{cases}
                    (-1)^{j}{\st(\overline{\pi})}, & \hbox{if $\pi=(\pi_1,\cdots,\pi_i,\overline{\pi},\pi_{n-j+1},\cdots,\pi_n)$ for some $i\in\mathbb{N}$,
                    $j\in\{0,1\}$ and $\overline{\pi}\neq\imath$, } \\
                    \delta_{\pi,\imath}, & \hbox{otherwise.}
                   \end{cases}
\end{align*}

Observe that the map $\varphi_2$ defined above is independent of the values of the  negative integers in $\pi=\pi_1\cdots \pi_n$. Thus to study $\varphi_2$, no information is lost in substituting $\smallsmile$ for those $\pi_i$ with $\pi_i<0$, which we will adapt for notational clarity. For example, for the previous $\pi$ we have $\pi=(5,\smallsmile,24,\smallsmile^2)$.
In general, any signed permutation $\pi\in \mathfrak{B}_n$ will be written as
$$ \pi=({\smallsmile}^{i_1},\pi_{i_1+1}\cdots \pi_{j_1},{\smallsmile}^{i_2},\pi_{j_1+i_2+1}\cdots \pi_{j_2},\cdots, {\smallsmile}^{i_k},\pi_{j_{k-1}+i_k+1}\cdots \pi_{j_k},{\smallsmile}^{i_{k+1}})$$
for clarity, where
$i_p\in\mathbb{N}$, $p\in[k+1]$, $j_{q-1}+i_q+1\leq j_{q}$, and $\pi_{l}\in\PP$ for $j_{q-1}+i_q+1\leq l\leq j_{q}$, $q\in[k]$ with the convention $j_{0}=0$.

With this notation, the map $\varphi_2:\HSym\rightarrow \SSym$ is simply

\begin{align}\label{eq:varphisigmawgtossym}
    \varphi_2(\pi):=\begin{cases}
                    (-1)^{j}{\st(\overline{\pi})}, & \hbox{if $\pi=({\smallsmile}^i,\overline{\pi},{\smallsmile}^j)$ for some $i\in\mathbb{N}$,
                    $j\in\{0,1\}$ and $\overline{\pi}\neq\imath$, } \\
                    \delta_{\pi,\imath}, & \hbox{otherwise.}
                   \end{cases}
\end{align}

Evidently $\varphi_2$ is surjective since it is the identity on permutations.
Also note that $\varphi_2(\pi)$ is zero as long as $\pi$ has multiple blocks of positive parts (separated by negative parts).

With this notation, the
\name{regularized composition} of $\pi$ is defined by
\begin{align}\label{eq:lemwcomppi}
{\wcomp}(\pi):= (\varepsilon^{i_1},{\rm{comp}}(\st(\pi_{i_1+1}\cdots \pi_{j_1})),\varepsilon^{i_2},&{\rm{comp}}(\st(\pi_{j_1+i_2+1}\cdots \pi_{j_2})),
    \cdots,\notag\\
     &\varepsilon^{i_k},{\rm{comp}}(\st(\pi_{j_{k-1}+i_k+1}\cdots \pi_{j_k})),\varepsilon^{i_{k+1}}).
\end{align}
For example, $\wcomp(5,\smallsmile,243,\smallsmile^2)=(1,\varepsilon,2,1,\varepsilon^2)$.
We then further define the $\bfk$-linear map
\begin{align}\label{eq:defnD2}
\mathcal{D}_2:\HSym\rightarrow \RQSym, \quad  \pi\mapsto F_{{\wcomp}(\pi)}.
\end{align}

We now arrive at the main result of this paper about the commutativity of the left square in diagram~\eqref{eq:diag2b}.

\begin{theorem}
The linear maps $\varphi_2$ and $\mathcal{D}_2$ are Hopf algebra homomorphisms such that $\mathcal{D}_1\varphi_2=\varphi_1\mathcal{D}_2$.
\label{thm:comm}
\end{theorem}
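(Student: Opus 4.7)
The plan is to verify the commutativity $\mathcal{D}_1 \varphi_2 = \varphi_1 \mathcal{D}_2$ at the level of linear maps directly by case analysis on basis elements of $\HSym$, and to invoke Theorem~\ref{thm:phi2} and Theorem~\ref{HAHD2fHSymtRQSym} for the Hopf algebra homomorphism properties of $\varphi_2$ and $\mathcal{D}_2$, respectively.

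For the commutativity, let $\pi \in \mathfrak{B}_n$ and write $\pi = ({\smallsmile}^{i_1}, \pi^{(1)}, {\smallsmile}^{i_2}, \pi^{(2)}, \ldots, {\smallsmile}^{i_k}, \pi^{(k)}, {\smallsmile}^{i_{k+1}})$, where each $\pi^{(p)}$ is a nonempty word of positive integers. The crucial observation is that the defining formulas \eqref{eq:varphisigmawgtossym} and \eqref{eq:varphi(Falpha)} share a matching vanishing pattern: by \eqref{eq:varphisigmawgtossym}, $\varphi_2(\pi)$ is nonzero only when $\pi=\imath$ or $\pi$ contains a single block of positive entries with at most one trailing $\smallsmile$ (that is, $k=1$ and $i_{k+1}\in\{0,1\}$); symmetrically by \eqref{eq:varphi(Falpha)}, $\varphi_1(F_\alpha)$ is nonzero only when $\alpha=\emptyset$ or $\alpha=(\varepsilon^i,\overline{\alpha},\varepsilon^j)$ with $\overline{\alpha}$ a nonempty composition (no interior $\varepsilon$) and $j\in\{0,1\}$. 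By formula \eqref{eq:lemwcomppi} for $\wcomp(\pi)$, these two conditions correspond exactly.

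I would then dispatch the subcases. If $k\ge 2$, then $\wcomp(\pi)$ has an $\varepsilon$-block strictly between two positive compositions, so $\varphi_2(\pi)=0=\varphi_1(F_{\wcomp(\pi)})$. If $k=1$ and $i_2\ge 2$, the trailing $\varepsilon$-count of $\wcomp(\pi)$ is at least $2$, and both sides again vanish. If $k=1$ and $i_2\in\{0,1\}$, then by \eqref{eq:varphisigmawgtossym} and \eqref{eq:D1defn}, $\mathcal{D}_1\varphi_2(\pi)=(-1)^{i_2}F_{\comp(\st(\pi^{(1)}))}$, while $\wcomp(\pi)=(\varepsilon^{i_1},\comp(\st(\pi^{(1)})),\varepsilon^{i_2})$ and \eqref{eq:varphi(Falpha)} give $\varphi_1\mathcal{D}_2(\pi)=(-1)^{i_2}F_{\comp(\st(\pi^{(1)}))}$, so the two expressions coincide. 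Finally, if $k=0$, then $\pi={\smallsmile}^{i_1}$ and one checks directly that both sides equal $1$ when $i_1=0$ and $0$ otherwise.

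The main obstacle lies not in this commutativity, which is bookkeeping once the matching vanishing patterns are identified, but in the Hopf algebra homomorphism statements. For $\varphi_2$, the key point, addressed in Theorem~\ref{thm:phi2}, is to reinterpret the shifted quasi-shuffle $\overline{\star}_{-1}$ on $\HSym$ as a stuffle of signed permutations so that the collision term in \eqref{eq:inductquasishuffleprod} is correctly absorbed under the map that forgets negative letters. For $\mathcal{D}_2$, addressed in Theorem~\ref{HAHD2fHSymtRQSym}, the machinery of signed $P$-partitions developed later in the paper is needed to identify $\mathcal{D}_2(\pi)$ with a generating function whose multiplicativity follows from a natural bijection between signed $P$-partitions of $\sigma\overline{\star}_{-1}\tau$ and pairs of signed $P$-partitions of $\sigma$ and $\tau$.
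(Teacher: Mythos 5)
Your proposal is correct and follows essentially the same route as the paper: the paper likewise defers the Hopf algebra homomorphism claims to Theorems~\ref{thm:phi2} and~\ref{HAHD2fHSymtRQSym} and proves only the linear identity $\mathcal{D}_1\varphi_2=\varphi_1\mathcal{D}_2$ here, by matching the vanishing conditions of $\varphi_2$ and $\varphi_1$ via the correspondence $\pi=({\smallsmile}^i,\overline{\pi},{\smallsmile}^j)\Leftrightarrow\wcomp(\pi)=(\varepsilon^i,\overline{\wcomp(\pi)},\varepsilon^j)$ and checking $\comp(\st(\overline{\pi}))=\overline{\wcomp(\pi)}$ in the surviving case. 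Your explicit enumeration of the subcases $k\ge 2$, $k=1$ with $i_2\ge 2$ or $i_2\in\{0,1\}$, and $k=0$ is just a finer-grained write-up of the same argument.
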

\begin{proof}
We first verify the commutativity of the left square of the diagram (\mref{eq:diag2b}) as linear structures. The proofs that $\varphi_2$ and $\mathcal{D}_2$ are Hopf algebra homomorphisms need more preparations and will be provided in Section~\mref{sec:phi2} and Section~\mref{sec:d2} respectively.

By Eqs.~\eqref{eq:D1defn} and \eqref{eq:varphisigmawgtossym}, for any signed permutation $\pi$ of $[n]$, we have
\begin{align*}
 \mathcal{D}_1\varphi_2(\pi)=&
 \begin{cases}
                    (-1)^{j}\mathcal{D}_1({\st(\overline{\pi})}), & \hbox{if $\pi=({\smallsmile}^i,\overline{\pi},{\smallsmile}^j)$ for some $i\in\mathbb{N}$, $j\in\{0,1\}$ and $\overline{\pi}\neq\imath$,} \\
                    \delta_{\pi,\imath}, & \hbox{otherwise.}
 \end{cases}\\
 =&\begin{cases}
                    (-1)^{j}F_{\text{comp}({\st(\overline{\pi})})}, & \hbox{if $\pi=({\smallsmile}^i,\overline{\pi},{\smallsmile}^j)$ for some $i\in\mathbb{N}$, $j\in\{0,1\}$ and $\overline{\pi}\neq\imath$,} \\
                    \delta_{\pi,\imath}, & \hbox{otherwise.}
 \end{cases}
\end{align*}
On the other hand,  it follows from Eqs.~\eqref{eq:varphi(Falpha)} and \eqref{eq:defnD2} that
\begin{align*}
    \varphi_1\mathcal{D}_2(\pi)=&\varphi_1(F_{{\wcomp}(\pi)})
    =\begin{cases}
                    (-1)^{j}F_{\overline{\alpha}}, & \hbox{if $\alpha=(\varepsilon^i,\overline{\alpha},\varepsilon^j)$ for some $i\in\mathbb{N}$, $j\in\{0,1\}$ and $\overline{\alpha}\neq\emptyset$, } \\
                    \delta_{\alpha,\imath}, & \hbox{otherwise,}
                   \end{cases}
\end{align*}
where $\alpha={\wcomp}(\pi)$.
Note that ${\wcomp}(\pi)=(\varepsilon^i,\overline{{\wcomp}(\pi)},\varepsilon^j)$ is equivalent to $\pi=({\smallsmile}^i,\overline{\pi},{\smallsmile}^j)$,
and $\overline{{\wcomp}(\pi)}\neq\emptyset$ is equivalent to $\overline{\pi}\neq\imath$. Hence we have $\text{comp}({\st(\overline{\pi})})=\overline{{\wcomp}(\pi)}$ in this case, so the proof is completed.
\end{proof}

\section{Stuffle product and the Hopf algebra homomorphism $\varphi_2$}
\mlabel{sec:phi2}

This section is devoted to showing that the map $\varphi_2$ defined by Eq.~ \eqref{eq:varphisigmawgtossym} is a Hopf algebra  homomorphism from $\HSym$ to $\SSym$.

For the application in this section, we give an explicit reformulation of $\star_\lambda$ in terms of order preserving maps called the stuffle~\cite{BBBL} in the study of multiple zeta values
but could be traced back to Cartier's work~\cite{Car1972} on free commutative Rota-Baxter algebras. It also agrees with the mixable shuffle product in free Rota-Baxter algebras~\cite{Gub12,G-K1}. We will follow the presentation in~\cite{Gub12}.

For positive integers $m$, $n$ and nonnegative integer $r$ with $0\leq r\leq \min(m,n)$, define
\begin{align*}
    \mathcal{J}_{m,n,r}:=\left\{(\varphi,\psi)\left|
    \begin{array}{ccc}
    \varphi:[m]\rightarrow [m+n-r],\psi:[n]\rightarrow [m+n-r]\ \text{are order}\\
     \text{ preserving injective maps and}\   \im(\varphi)\cup\im(\psi)=[m+n-r]
    \end{array}
    \right.
    \right\}
\end{align*}
and denote
\begin{align*}
    \overline{\mathcal{J}}_{m,n}=\bigcup_{r=0}^{\min(m,n)}\mathcal{J}_{m,n,r}.
\end{align*}

Let $u=a_1a_2\cdots a_m$, $v=b_1b_2\cdots b_n$ be words on $A$. For $(\varphi,\psi)\in \mathcal{J}_{m,n,r}$, we define $u\shap_{(\varphi,\psi)}v$ to be the word of length $m+n-r$
whose $i$-th factor is
\begin{align*}
(u\shap_{(\varphi,\psi)}v)_i:=
\begin{cases}
    a_j, &\text{if}\  i=\varphi(j), i\not\in\im\psi, \\
    b_{j'}, &\text{if}\  i=\psi(j'), i\not\in\im\varphi, \\
    a_j\bullet b_{j'}, &\text{if}\  i=\varphi(j)=\psi(j').
\end{cases}
\end{align*}
In short,
\begin{align*}
    u\shap_{(\varphi,\psi)}v=(a_{\varphi^{-1}(1)}\bullet b_{\psi^{-1}(1)})(a_{\varphi^{-1}(2)}\bullet b_{\psi^{-1}(2)})\cdots (a_{\varphi^{-1}(m+n-r)}\bullet b_{\psi^{-1}(m+n-r)})
\end{align*}
with the convention that $a_{\emptyset}=b_{\emptyset}=1$. Define the \name{stuffle product} of $u$ and $v$ by
\begin{align}\label{eq:ushastlambdv}
    u\shap_{{\rm{st}},\lambda}v:=\sum_{r=0}^{\min(m,n)}\lambda^r\left(\sum_{(\varphi,\psi)\in\mathcal{J}_{m,n,r}}u\shap_{(\varphi,\psi)}v \right).
\end{align}

Completely analogous to the proof of~\cite[Theorem 3.1.19]{Gub12}, we derive
\begin{prop}\label{quasishustcoinc}
The multiplications $\shap_{{\rm {st}},\lambda}$ and $\star_{\lambda}$ are the same.
\end{prop}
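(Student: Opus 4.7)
The plan is to prove Proposition~\ref{quasishustcoinc} by induction on the total length $m+n$ of the two input words $u = a_1\cdots a_m$ and $v = b_1\cdots b_n$, matching the recursive definition of $\star_\lambda$ given in Eq.~\eqref{eq:inductquasishuffleprod} against a structural decomposition of the index set $\overline{\mathcal{J}}_{m,n}$.

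For the base case, note that $\overline{\mathcal{J}}_{0,n} = \mathcal{J}_{0,n,0}$ contains only the pair $(\emptyset, \id_{[n]})$, which yields $u \shap_{{\rm st},\lambda} v = v$, and likewise for $m=0$. Since $\imath$ is the identity for $\star_\lambda$ by definition, the base case holds. For the inductive step with $m,n \geq 1$, write $u = a u'$ and $v = b v'$ with $a=a_1$, $b=b_1$. I would partition $\overline{\mathcal{J}}_{m,n}$ into three disjoint classes according to which of $\varphi,\psi$ hit the first position $1 \in [m+n-r]$: namely $(\text{A})$ those with $\varphi(1)=1$ and $1 \notin \im\psi$, $(\text{B})$ those with $\psi(1)=1$ and $1 \notin \im\varphi$, and $(\text{C})$ those with $\varphi(1)=\psi(1)=1$. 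Exactly one of these must occur since $\im\varphi \cup \im\psi = [m+n-r]$.

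Each class is in natural bijection with an index set for a smaller stuffle: class $(\text{A})$ corresponds via truncation to $\overline{\mathcal{J}}_{m-1,n}$ (keeping the same $r$), class $(\text{B})$ to $\overline{\mathcal{J}}_{m,n-1}$ (keeping $r$), and class $(\text{C})$ to $\overline{\mathcal{J}}_{m-1,n-1}$ with $r$ replaced by $r-1$ (losing one coincidence). Summing over each class and factoring out the shared first letter gives
\begin{align*}
u \shap_{{\rm st},\lambda} v = a\bigl(u' \shap_{{\rm st},\lambda} v\bigr) + b\bigl(u \shap_{{\rm st},\lambda} v'\bigr) + \lambda (a\bullet b)\bigl(u' \shap_{{\rm st},\lambda} v'\bigr),
\end{align*}
where the extra factor of $\lambda$ in the third summand comes from the shift $r \mapsto r-1$ of the exponent in Eq.~\eqref{eq:ushastlambdv}. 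By the induction hypothesis, the three stuffle products on the right coincide with the corresponding quasi-shuffle products, so comparing with Eq.~\eqref{eq:inductquasishuffleprod} yields $u \shap_{{\rm st},\lambda} v = u \star_\lambda v$, completing the induction.

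The one step that requires care, and is the main place bookkeeping can slip, is verifying that the three truncation bijections above preserve the letter-by-letter description of $u \shap_{(\varphi,\psi)} v$: when one removes the first coordinate and shifts all remaining image values down by one, the formula $(u \shap_{(\varphi,\psi)} v)_i = a_{\varphi^{-1}(i)} \bullet b_{\psi^{-1}(i)}$ must correctly restrict to the analogous expression for the truncated pair acting on $(u',v)$, $(u,v')$, or $(u',v')$. Once this identification is checked, the associativity of $\bullet$ is not needed here (only bilinearity), and the three-term recursion matches term-for-term with the three classes, so the proof follows the same template as the classical commutative case treated in~\cite[Theorem 3.1.19]{Gub12}.
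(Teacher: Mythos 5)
Your proof is correct and is essentially the argument the paper invokes: the paper's own ``proof'' simply defers to \cite[Theorem 3.1.19]{Gub12}, whose underlying argument is exactly this induction on $m+n$ via the three-way partition of $\overline{\mathcal{J}}_{m,n}$ according to whether position $1$ lies in $\im\varphi$ only, $\im\psi$ only, or both, matched against the three terms of the recursion \eqref{eq:inductquasishuffleprod}. Your bookkeeping of the truncation bijections and the shift $r\mapsto r-1$ producing the factor $\lambda$ is accurate, and you correctly observe that neither commutativity nor associativity of $\bullet$ is needed for this identification.
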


Then from  Eq.~\eqref{eq:ushastlambdv}, we obtain the following alternative description of the quasi-shuffle product $\star_\lambda$, noting the left-right symmetry in the definition of $\shap_{{\rm {st}},\lambda}$.
\begin{coro}\label{coro:Rindstarlambsecondrecuform}
Let $\lambda\in \bfk$.
For all letters $c,d\in A$ and words $w,v$ on $A$, we have
\begin{align*}
wc\star_\lambda vd=(w\star_\lambda vd)c+(wc\star_\lambda v)d+\lambda(w\star_\lambda v)(c\bullet d).
\end{align*}
\end{coro}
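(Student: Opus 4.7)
The plan is to exploit the identification $\star_\lambda = \shap_{{\rm st},\lambda}$ established in Proposition~\ref{quasishustcoinc} and read the right-recursion directly off the stuffle description. The sum in Eq.~\eqref{eq:ushastlambdv} ranges over index pairs $(\varphi,\psi)$ with no preferred orientation, so while Eq.~\eqref{eq:inductquasishuffleprod} arises by classifying $(\varphi,\psi)$ according to which factor contributes position $1$ of the output, a completely parallel right-recursion should arise by classifying instead according to which factor contributes the \emph{last} position.

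Setting $M=|w|+1$ and $N=|v|+1$, I would partition each $\mathcal{J}_{M,N,r}$ into three disjoint classes depending on which of $c=a_M$, $d=b_N$, or $c\bullet d$ occupies the final slot $M+N-r$: either (i) only $\varphi(M)=M+N-r$, (ii) only $\psi(N)=M+N-r$, or (iii) $\varphi(M)=\psi(N)=M+N-r$. Restricting the index pair to the smaller domains $[M-1]$ and/or $[N-1]$ as appropriate yields natural bijections
\[
\mathcal{J}_{M,N,r} \;\longleftrightarrow\; \mathcal{J}_{M-1,N,r} \,\sqcup\, \mathcal{J}_{M,N-1,r} \,\sqcup\, \mathcal{J}_{M-1,N-1,r-1},
\]
and under these bijections $(wc)\shap_{(\varphi,\psi)}(vd)$ factors cleanly as the stuffled prefix followed by $c$, $d$, or $c\bullet d$ respectively. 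Summing with the weight $\lambda^r$ and reindexing (the exponent $r$ is unchanged in cases (i) and (ii); in case (iii) one writes $\lambda^r = \lambda\cdot\lambda^{r-1}$ and reindexes $r\mapsto r-1$) produces exactly $(w\star_\lambda vd)c + (wc\star_\lambda v)d + \lambda(w\star_\lambda v)(c\bullet d)$, which is the required identity.

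The only nontrivial bookkeeping item is to verify that the three restriction maps are bijections preserving the relevant stuffle structure; this is immediate from the order-preserving nature of $\varphi$ and $\psi$, which forces the top element of each domain to land at the top of the joint image $\im(\varphi)\cup\im(\psi)=[M+N-r]$. An alternative route would be a direct induction on $|w|+|v|$ using the left-recursion Eq.~\eqref{eq:inductquasishuffleprod}, but this requires an unpleasant split into subcases according as $w$ or $v$ is empty, whereas the stuffle-based argument is uniform. I do not anticipate a genuine obstacle: the corollary is essentially the observation that Proposition~\ref{quasishustcoinc} trades the asymmetric defining recursion for the manifestly symmetric stuffle description, from which the right-recursion is read off as readily as the left one.
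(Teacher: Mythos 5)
Your proof is correct and takes essentially the same route as the paper, which derives the corollary from Proposition~\ref{quasishustcoinc} precisely by ``noting the left-right symmetry in the definition of $\shap_{{\rm st},\lambda}$'' in Eq.~\eqref{eq:ushastlambdv}. Your classification of the pairs $(\varphi,\psi)$ according to which map attains the top position $M+N-r$ is exactly the bookkeeping the paper leaves implicit, and it is sound because an order-preserving injection must send the top of its domain to the maximum of its image.
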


\begin{lemma}\label{lem:varphi2sigtausp101}
Let $\sigma$ be a signed permutation of $[m]$. If there exist $i_1,i_2,i_3$ with $1\leq i_1< i_2< i_3\leq m$ such that $\sigma_{i_1}>0$, $\sigma_{i_2}<0$, $\sigma_{i_3}>0$, then for any signed permutation $\tau$, we have
$\varphi_2(\sigma\overline{\star}_{-1}\tau)=\varphi_2(\tau\overline{\star}_{-1}\sigma)=0$.
\end{lemma}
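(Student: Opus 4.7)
The plan is to use the stuffle reformulation of the quasi-shuffle product (Proposition \ref{quasishustcoinc}) together with the observation that the product $\bullet$ defined by Eq.~\eqref{eq:diamondsp1} annihilates any pair containing a positive letter. First I would write
\begin{equation*}
\sigma \,\overline{\star}_{-1}\, \tau \;=\; \st\!\left(\sigma \star_{-1} \tau[m]\right) \;=\; \st\!\left(\sum_{r=0}^{\min(m,n)} (-1)^r \sum_{(\varphi,\psi)\in \mathcal{J}_{m,n,r}} \sigma \shap_{(\varphi,\psi)} \tau[m]\right),
\end{equation*}
so the lemma reduces to showing that each individual term $\st\!\bigl(\sigma \shap_{(\varphi,\psi)} \tau[m]\bigr)$ that is not already zero lies in the kernel of $\varphi_2$.

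The key observation is that since $\sigma_{i_1}, \sigma_{i_3} > 0$, any merging $\sigma_{i_1}\bullet b$ or $\sigma_{i_3}\bullet b$ produces $0$. Hence in any non-vanishing term $\sigma \shap_{(\varphi,\psi)} \tau[m]$, neither $\sigma_{i_1}$ nor $\sigma_{i_3}$ may be at a collision position of $(\varphi,\psi)$; each appears on its own as a positive letter at positions $\varphi(i_1) < \varphi(i_3)$ in the resulting word. The letter $\sigma_{i_2}$ appears at position $\varphi(i_2)$ with $\varphi(i_1)<\varphi(i_2)<\varphi(i_3)$, either on its own (still negative) or as $\sigma_{i_2}\bullet \tau[m]_k$, which by Eq.~\eqref{eq:diamondsp1} is non-zero only if $\tau[m]_k<0$, and in that case equals $\sigma_{i_2}$, hence again negative. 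Either way the resulting word has a positive letter, then a negative letter, then a positive letter, and this sign pattern is preserved by the standardization $\st$.

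Consequently, each non-zero signed permutation appearing in $\sigma\,\overline{\star}_{-1}\,\tau$ possesses at least two blocks of positive parts separated by a negative part, and therefore cannot be written as $({\smallsmile}^i,\overline{\pi},{\smallsmile}^j)$ with $j\in\{0,1\}$. By the definition of $\varphi_2$ in Eq.~\eqref{eq:varphisigmawgtossym}, each such term is sent to $0$, and linearity gives $\varphi_2(\sigma\,\overline{\star}_{-1}\,\tau)=0$. For $\tau\,\overline{\star}_{-1}\,\sigma = \st(\tau\star_{-1}\sigma[n])$, the same argument applies verbatim once we recall that the shift $\sigma\mapsto \sigma[n]$ preserves signs: the images of $\sigma_{i_1},\sigma_{i_2},\sigma_{i_3}$ in $\sigma[n]$ retain the positive-negative-positive pattern, and the reasoning about mergings is symmetric.

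I do not anticipate any serious obstacle: the whole argument rests on the single fact that $a\bullet b$ kills any pair involving a positive letter, which immediately blocks $\sigma_{i_1}$ and $\sigma_{i_3}$ from being absorbed. The only point that requires minor care is making explicit that $\st$ preserves the sign of each letter (which follows from its definition), so that the sign pattern obstructing membership in the image class $({\smallsmile}^i,\overline{\pi},{\smallsmile}^j)$ with $j\le 1$ survives standardization.
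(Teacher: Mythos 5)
Your proof is correct and follows essentially the same route as the paper: decompose $\sigma\star_{-1}\tau[m]$ via the stuffle formulation, observe that $\sigma_{i_1},\sigma_{i_3}$ cannot survive a collision while $\sigma_{i_2}$ stays negative in any surviving term, and conclude that every nonzero term exhibits a positive--negative--positive pattern (preserved by $\st$) and is therefore killed by $\varphi_2$. The only cosmetic difference is that the paper phrases the case analysis through explicit formulas for $\sigma_{i_s}\bullet\tau_{\psi^{-1}(j_s)}[m]$, whereas you argue the same dichotomy in words.
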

\begin{proof}
Let $\tau=\tau_1\tau_2\cdots\tau_n$. Then $\sigma\overline{\star}_{-1}\tau=\st(\sigma\star_{-1}\tau[m])$.
By Eq.~ \eqref{eq:ushastlambdv} and Proposition \ref{quasishustcoinc}, we have
\begin{align*}
   \sigma\star_{-1}\tau[m]=&\sum_{r=0}^{\min(m,n)}(-1)^r\left(\sum_{(\varphi,\psi)\in\mathcal{J}_{m,n,r}}\sigma\shap_{(\varphi,\psi)}\tau[m] \right)
\end{align*}
where for $(\varphi,\psi)\in\mathcal{J}_{m,n,r}$,
\begin{align*}
    \sigma\shap_{(\varphi,\psi)}\tau[m]
    =(\sigma_{\varphi^{-1}(1)}\bullet\tau_{\psi^{-1}(1)}[m])(\sigma_{\varphi^{-1}(2)}\bullet\tau_{\psi^{-1}(2)}[m])\cdots(\sigma_{\varphi^{-1}(m+n-r)}\bullet\tau_{\psi^{-1}(m+n-r)}[m]).
\end{align*}
Let $j_1=\varphi(i_1)$, $j_2=\varphi(i_2)$, $j_3=\varphi(i_3)$.
Since $\varphi$ is an order preserving map, we have  $1\leq j_1<j_2<j_3\leq m+n-r$  for each $(\varphi,\psi)\in\mathcal{J}_{m,n,r}$,
and hence
\begin{align*}
    \sigma\shap_{(\varphi,\psi)}\tau[m]
    =\cdots(\sigma_{i_1}\bullet\tau_{\psi^{-1}(j_1)}[m])\cdots(\sigma_{i_2}\bullet\tau_{\psi^{-1}(j_2)}[m])\cdots(\sigma_{i_3}\bullet\tau_{\psi^{-1}(j_3)}[m]) \cdots.
\end{align*}
By Eq.~\eqref{eq:diamondsp1},
\begin{align*}
   \sigma_{i_s}\bullet\tau_{\psi^{-1}(j_s)}[m]=
   \begin{cases}
   \sigma_{i_s},&\text{if}\ j_s\not\in\im \psi,\\
   0,           &\text{if}\ j_s\in\im \psi,
   \end{cases}\quad s=1,3
\end{align*}
and
\begin{align*}
   \sigma_{i_2}\bullet\tau_{\psi^{-1}(j_2)}[m]=
   \begin{cases}
   \sigma_{i_2},&\text{if either}\ j_2\not\in\im \psi\ \text{or}\ j_2\in\im \psi\ \text{with}\ \tau_{\psi^{-1}(j_2)}<0,\\
   0, &\text{if}\  j_2\in\im \psi\ {\text{with}}\ \tau_{\psi^{-1}(j_2)}>0.
   \end{cases}
\end{align*}
If either of the three products is zero, then the corresponding $\sigma\shap_{(\varphi,\psi)}\tau[m]$ is zero by definition. If none of the three is zero, then the corresponding $\sigma\shap_{(\varphi,\psi)}\tau[m]$ has two blocks of positive parts separated by a block of negative parts and hence is annihilated by $\varphi_2$, by the remark after Eq.~(\mref{eq:varphisigmawgtossym}).
Therefore, $\varphi_2(\sigma\overline{\star}_{-1}\tau)=0$.
An analogous argument shows $\varphi_2(\tau\overline{\star}_{-1}\sigma)=0$.
\end{proof}

Let $\tau$ be a signed permutation of $[n]$ with $\tau_i<0$ for all $i\in[n]$. Recall that for any  positive integer $m$, we have
$\tau[m]=(\pi_1+m^{-})(\pi_2+m^{-})\cdots(\pi_n+m^{-})$ which is denoted by ${\smallsmile}^n[m]$ for short.
Since $\star_{-1}$ is the quasi-shuffle product which coincides with the mixable shuffle product on commutative algebras, by~\cite[Prop.~3.2.2]{Gub12} or \cite[Prop~6.1]{G-K1} we have

\begin{lemma}\label{lem:0msta-10np}
For any $m,n\in\mathbb{N}$, we have
\begin{align*}
    {\smallsmile}^m\star_{-1}{\smallsmile}^n[m]=\sum_{i=0}^{\min(m,n)}(-1)^i\binom{m+n-i}{i,m-i,n-i}{\smallsmile}^{m+n-i},
\end{align*}
where $\binom{m+n-i}{i,m-i,n-i}{\smallsmile}^{m+n-i}$ means that there are $\binom{m+n-i}{i,m-i,n-i}$ signed permutations of the form ${\smallsmile}^{m+n-i}$.
\end{lemma}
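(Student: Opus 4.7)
The plan is to invoke Proposition~\ref{quasishustcoinc} and compute $\star_{-1}$ directly from its stuffle reformulation \eqref{eq:ushastlambdv}, applied to $u={\smallsmile}^m$ and $v={\smallsmile}^n[m]$.

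The key simplification is that every letter occurring in $u$ or $v$ is a negative integer. Hence by the definition of $\bullet$ in Eq.~\eqref{eq:diamondsp1}, for any two such letters $a$ and $b$ we have $a\bullet b=a$, which is again negative. Consequently, for every $r$ with $0\leq r\leq \min(m,n)$ and every $(\varphi,\psi)\in\mathcal{J}_{m,n,r}$, the word
\[
u\shap_{(\varphi,\psi)}v=(u_{\varphi^{-1}(1)}\bullet v_{\psi^{-1}(1)})\cdots(u_{\varphi^{-1}(m+n-r)}\bullet v_{\psi^{-1}(m+n-r)})
\]
has length $m+n-r$ and every one of its entries is negative; that is, it is a signed permutation of the form ${\smallsmile}^{m+n-r}$. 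Thus the stuffle formula \eqref{eq:ushastlambdv} immediately collapses to
\[
{\smallsmile}^m\star_{-1}{\smallsmile}^n[m]=\sum_{r=0}^{\min(m,n)}(-1)^r\,|\mathcal{J}_{m,n,r}|\,{\smallsmile}^{m+n-r},
\]
and it remains only to count $|\mathcal{J}_{m,n,r}|$.

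An element of $\mathcal{J}_{m,n,r}$ consists of order-preserving injections $\varphi\colon[m]\to[m+n-r]$ and $\psi\colon[n]\to[m+n-r]$ whose images cover $[m+n-r]$. Since $|\im\varphi|+|\im\psi|=m+n$ while $|\im\varphi\cup\im\psi|=m+n-r$, the intersection $\im\varphi\cap\im\psi$ has exactly $r$ elements. Conversely, any ordered partition of $[m+n-r]$ into three disjoint subsets of sizes $r$, $m-r$, $n-r$ (the shared positions, the $\varphi$-only positions, and the $\psi$-only positions) uniquely determines the pair $(\varphi,\psi)$, because an order-preserving injection is determined by its image. Hence $|\mathcal{J}_{m,n,r}|=\binom{m+n-r}{r,\,m-r,\,n-r}$, and plugging this in gives the claimed identity.

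There is no serious obstacle here: both steps are essentially computational. The only point requiring a little care is the count of $|\mathcal{J}_{m,n,r}|$, which hinges on the fact that an order-preserving injection is uniquely determined by its image, together with an inclusion-exclusion bookkeeping of the intersection size.
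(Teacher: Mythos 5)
Your proof is correct. Note, though, that the paper does not actually prove this lemma by computation: it disposes of it with a one-line citation, observing that $\star_{-1}$ on an all-negative alphabet coincides with the mixable shuffle product and then quoting the known formula from \cite[Prop.~3.2.2]{Gub12} or \cite[Prop.~6.1]{G-K1}. Your argument is precisely the computation hiding behind that citation, carried out self-containedly from the paper's own stuffle description (Proposition~\ref{quasishustcoinc} and Eq.~\eqref{eq:ushastlambdv}). Both of your key steps are sound: since every letter of ${\smallsmile}^m$ and ${\smallsmile}^n[m]$ is negative, Eq.~\eqref{eq:diamondsp1} gives $a\bullet b=a<0$, so each $u\shap_{(\varphi,\psi)}v$ with $(\varphi,\psi)\in\mathcal{J}_{m,n,r}$ is a word of shape ${\smallsmile}^{m+n-r}$ and no term vanishes; and a pair $(\varphi,\psi)\in\mathcal{J}_{m,n,r}$ is the same datum as an ordered partition of $[m+n-r]$ into the three blocks $\im\varphi\cap\im\psi$, $\im\varphi\setminus\im\psi$, $\im\psi\setminus\im\varphi$ of sizes $r$, $m-r$, $n-r$ (the covering condition forces $|\im\varphi\cap\im\psi|=r$, and an order-preserving injection is determined by its image), whence $|\mathcal{J}_{m,n,r}|=\binom{m+n-r}{r,\,m-r,\,n-r}$. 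Together with the weight $(-1)^r$ this yields the identity. What your version buys is independence from the external Rota--Baxter references; the only cosmetic caveat is that $\mathcal{J}_{m,n,r}$ is defined in the paper for positive $m,n$, so the degenerate cases $m=0$ or $n=0$ (where the product is trivially the single term $i=0$) deserve a passing remark.
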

As a consequence, we obtain the following identity.
\begin{coro}\label{coro:starnumbbino}
For any $m,n\in\mathbb{N}$,
\begin{align*}
\sum_{i=0}^{\min(m,n)}(-1)^i\binom{m+n-i}{i,m-i,n-i}=1.
\end{align*}
\end{coro}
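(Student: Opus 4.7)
The plan is to apply a carefully chosen algebra homomorphism to both sides of Lemma~\ref{lem:0msta-10np} and then read off the identity. Specifically, I will introduce the augmentation
\[
\epsilon\colon \bfk\langle-\PP\rangle\longrightarrow \bfk,\qquad \epsilon(w):=1
\]
for every word $w$ on the alphabet $-\PP$ (including the empty word $\imath$), extended $\bfk$-linearly. Applied to the right-hand side of Lemma~\ref{lem:0msta-10np}, which is a signed sum of $\binom{m+n-i}{i,m-i,n-i}$ distinct signed permutations of shape ${\smallsmile}^{m+n-i}$ for each $i$, the map $\epsilon$ returns precisely the alternating multinomial sum $\sum_{i=0}^{\min(m,n)}(-1)^i\binom{m+n-i}{i,m-i,n-i}$ appearing in the Corollary. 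Applied to the left-hand side ${\smallsmile}^m\star_{-1}{\smallsmile}^n[m]$, I expect it to return $\epsilon({\smallsmile}^m)\cdot\epsilon({\smallsmile}^n[m])=1\cdot 1=1$, and the Corollary will then be immediate.

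The crux of the plan is to verify that $\epsilon$ is a $\star_{-1}$-algebra homomorphism on the subalgebra $\bfk\langle-\PP\rangle$, which is closed under $\star_{-1}$ because $c\bullet d=c\in -\PP$ for any $c,d\in -\PP$ by Eq.~\eqref{eq:diamondsp1}. I would prove multiplicativity by induction on the total length of the two factors. The base cases $\epsilon(\imath\star_{-1}v)=\epsilon(v)=1$ and $\epsilon(u\star_{-1}\imath)=\epsilon(u)=1$ for negative-letter words $u,v$ are immediate. For the inductive step I would peel the last letters off both factors by Corollary~\ref{coro:Rindstarlambsecondrecuform}: for $c,d\in -\PP$,
\[
wc\star_{-1}vd=(w\star_{-1}vd)c+(wc\star_{-1}v)d-(w\star_{-1}v)c,
\]
so applying $\epsilon$ (which is unchanged by appending a single letter, since $\epsilon(xc)=\epsilon(x)$) reduces the problem to the three-term recurrence
\[
T(m,n)=T(m-1,n)+T(m,n-1)-T(m-1,n-1)
\]
with boundary $T(m,0)=T(0,n)=1$, from which $T(m,n)=1$ follows.

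Assembling, Lemma~\ref{lem:0msta-10np} together with the multiplicativity of $\epsilon$ yields
\[
\sum_{i=0}^{\min(m,n)}(-1)^i\binom{m+n-i}{i,m-i,n-i}=\epsilon\bigl({\smallsmile}^m\star_{-1}{\smallsmile}^n[m]\bigr)=\epsilon({\smallsmile}^m)\,\epsilon({\smallsmile}^n[m])=1,
\]
which is the Corollary. The only real obstacle is the inductive step for multiplicativity of $\epsilon$; what makes it smooth is the asymmetric rule $c\bullet d=c$ from Eq.~\eqref{eq:diamondsp1}, which keeps the recursion inside the negative-letter subalgebra where $\epsilon$ has a chance of being a character---on the full $\bfk\langle A\rangle$ the same $\epsilon$ would fail to be multiplicative precisely because $a\bullet b$ can vanish when signs disagree.
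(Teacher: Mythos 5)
Your proof is correct and is essentially the paper's own argument in different clothing: the paper likewise uses Lemma~\ref{lem:0msta-10np} to interpret the sum as the number of plus terms minus the number of minus terms in ${\smallsmile}^m\star_{-1}{\smallsmile}^n[m]$ (which is exactly your augmentation $\epsilon$ applied to that product) and establishes the same three-term recursion $A_{m,n}=A_{m-1,n}+A_{m,n-1}-A_{m-1,n-1}$ with boundary value $1$. The only cosmetic difference is that the paper peels off first letters via Eq.~\eqref{eq:inductquasishuffleprod} while you peel off last letters via Corollary~\ref{coro:Rindstarlambsecondrecuform}; both reductions are valid and yield the identical recurrence.
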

\begin{proof}
Let $\sigma=\sigma_1\sigma_2\cdots\sigma_m$ and $\tau=\tau_1\tau_2\cdots\tau_n$ be signed permutations with $\sigma_i<0$ for $i\in[m]$ and $\tau_j<0$ for $j\in [n]$.
Denote by $A_{mn}$ the left-hand side of the desired equation.
By Lemma \ref{lem:0msta-10np}, $A_{mn}$ is equal to the number of terms with plus sign minus the number of terms with minus sign in the product $\sigma\star_{-1}\tau[m]$.
We show $A_{mn}=1$ by induction on $m+n$. If $m+n\leq1$, then one of $\sigma$, $\tau$ is $\imath$, so that  $A_{mn}=1$ is true. Assume that $A_{mn}=1$ holds for all $m+n\leq k$ where $k\geq1$ and consider the case $m+n=k+1$. Note that
$$
\sigma\star_{-1}\tau[m]=\sigma_1(\sigma_2\cdots\sigma_m\star_{-1}\tau[m])+(\tau_1+m^{-})(\sigma\star_{-1}\tau_2\cdots\tau_n[m])-\sigma_1(\sigma_2\cdots\sigma_m\star_{-1}\tau_2\cdots\tau_n[m]),
$$
by the induction hypothesis we have
$A_{mn}=A_{m-1,n}+A_{m,n-1}-A_{m-1,n-1}=1$.
So the proof follows by induction.
\end{proof}

\begin{lemma}\label{lem:varphi2sigtau0isigtau0j}
Let $\sigma=({\smallsmile}^{i},\overline{\sigma},{\smallsmile}^{j})$ and $\tau=({\smallsmile}^{p},\overline{\tau},{\smallsmile}^{q})$ be signed permutations where  $i,j,p$ and $q$ are nonnegative integers.
If one of $j$ or $q$ is strictly larger than $1$, then
$\varphi_2(\sigma\overline{\star}_{-1}\tau)=0$.
\end{lemma}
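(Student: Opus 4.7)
The plan is to expand the shifted quasi-shuffle via the stuffle form of Proposition~\ref{quasishustcoinc},
\[
\sigma\overline{\star}_{-1}\tau = \st\Bigl(\sum_{r\ge 0}(-1)^{r}\sum_{(\varphi,\psi)\in\mathcal{J}_{m,n,r}}\sigma\shap_{(\varphi,\psi)}\tau[m]\Bigr),
\]
and apply $\varphi_2$ term by term. Two structural observations organize the analysis. First, by~\eqref{eq:diamondsp1} the stuffled word $w=\sigma\shap_{(\varphi,\psi)}\tau[m]$ is nonzero only when every stuffed position pairs a negative letter of $\sigma$ with a negative letter of $\tau[m]$. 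Second, by the definition of $\varphi_2$, the value $\varphi_2(\st(w))$ is nonzero precisely when the positive letters of $w$ form one contiguous block and the trailing block of negatives of $w$ has length at most $1$. Call such a $w$ \emph{admissible}.

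I would then split into cases according to whether $\overline\sigma$ and $\overline\tau$ are empty. If both are nonempty, an admissible $w$ has its positive letters filling an interval $[c,d]$ with $d=\max(\varphi(m-j),\psi(n-q))$. I claim that every $k\in[m-j+1,m]$ satisfies $\varphi(k)>d$: indeed $\varphi(k)>\varphi(m-j)\ge c$, so if $\varphi(k)\le d$ the negative letter $\sigma_k$ would lie inside the positive interval $[c,d]$, contradicting either admissibility or the sign rule of~\eqref{eq:diamondsp1} (a positive--negative stuffing vanishes). The symmetric statement holds for $\psi$, so the trailing block of $w$ has at least $j+q-s$ distinct positions with $s\le\min(j,q)$, giving trailing length $\ge\max(j,q)\ge 2$. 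Hence no admissible term survives. The case $\overline\sigma=\overline\tau=\imath$ is immediate because then every $w$ is all-negative of positive length.

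The remaining case is $\overline\sigma=\imath$ and $\overline\tau\ne\imath$ (the symmetric case handled identically). Then $\sigma={\smallsmile}^m$, all positives of any admissible $w$ come from $\overline\tau[m]$, and $\rho:=\st(\overline\tau)\ne\imath$ is the common positive standardization. If $q\ge 2$ the same counting argument shows no admissible term exists. If $q\le 1$ the hypothesis forces $j\ge 2$, hence $m\ge 2$, and I parameterize an admissible $(\varphi,\psi)$ by the triple
\[
k_0 = \bigl|\{k\in[m]:\varphi(k)<\psi(p+1)\}\bigr|,\qquad s_P = \#\text{prefix stuffings},\qquad s_S = \#\text{suffix stuffings}.
\]
The trailing-length condition forces $k_0\in\{m-1,m\}$ and uniquely determines $s_S$ from $(k_0,q)$; a direct check on the four sub-cases shows the suffix arrangement is then unique. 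The number of $(\varphi,\psi)$ with parameters $(k_0,s_P,s_S)$ equals the prefix mixable-shuffle count $\binom{k_0+p-s_P}{s_P,k_0-s_P,p-s_P}$, and each such term contributes $(-1)^{r+t}\rho=(-1)^{s_P+(m-k_0)+q}\rho$ to $\varphi_2(\sigma\overline{\star}_{-1}\tau)$, where $r=s_P+s_S$ and $t=(m-k_0)+q-s_S$. Summing over $s_P$ via Corollary~\ref{coro:starnumbbino} applied to ${\smallsmile}^{k_0}\star_{-1}{\smallsmile}^p[k_0]$ collapses the inner sum to $1$, leaving
\[
\sum_{k_0\in\{m-1,m\}}(-1)^{(m-k_0)+q}=(-1)^q+(-1)^{q+1}=0.
\]

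The main obstacle will be this last case: setting up the parameterization correctly, verifying that the trailing-length constraint both restricts $k_0$ to $\{m-1,m\}$ and pins down a unique admissible $s_S$, and carrying out the bookkeeping to isolate the prefix signed sum in a form to which Corollary~\ref{coro:starnumbbino} applies.
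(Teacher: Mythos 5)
Your proposal is correct, and it follows the same overall skeleton as the paper's proof: expand $\sigma\overline{\star}_{-1}\tau$ via the stuffle form of Proposition~\ref{quasishustcoinc}, kill most terms by the sign/position analysis of Eq.~\eqref{eq:diamondsp1} and Eq.~\eqref{eq:varphisigmawgtossym}, and resolve the surviving terms with the identity of Corollary~\ref{coro:starnumbbino}. The execution differs in two places. For the ``easy'' terms, the paper argues case by case (a positive--negative--positive pattern as in Lemma~\ref{lem:varphi2sigtausp101}, or the last two positions both negative, read off from the constraints on $\varphi^{-1}(m+n-r)$ and $\varphi^{-1}(m+n-r-1)$), whereas your single ``trailing block has length at least $\max(j,q)\geq 2$'' argument unifies these and is somewhat cleaner. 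For the hard case $\overline{\sigma}=\imath$, $q\leq 1$, the paper peels letters off the right end recursively using Corollary~\ref{coro:Rindstarlambsecondrecuform}, reducing after $t$ iterations to the closed form of Lemma~\ref{lem:0msta-10np} and then to Corollary~\ref{coro:starnumbbino}; you instead enumerate the admissible stuffle configurations directly by the triple $(k_0,s_P,s_S)$, observe that admissibility pins $k_0$ to $\{m-1,m\}$ with a unique suffix, and collapse the prefix sum with the same corollary. The recursion buys a shorter write-up at the cost of an ``iterate the above argument up to $t$ times'' step; your enumeration is more explicit and makes the final cancellation $(-1)^q+(-1)^{q+1}=0$ transparent, at the cost of the combinatorial bookkeeping you already flagged. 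Both are complete proofs; I find no gap in yours.
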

\begin{proof}
Let $m=\ell(\sigma)$ and $n=\ell(\tau)$. Without loss of generality, we assume that $j\geq2$. Hence $\varphi_2(\sigma)=0$
by Eq.~\eqref{eq:varphisigmawgtossym}. If $n=0$, then $\tau=\imath$ so that
$\varphi_2(\sigma\overline{\star}_{-1}\tau)=\varphi_2(\sigma)=0$.
We next suppose that $n\geq1$. Thus, $\tau\neq\imath$ and $\min(m,n)\geq1$.
Analogous to the proof of Lemma \ref{lem:varphi2sigtausp101}, we only need to show that $\varphi_2(\st(\sigma\shap_{(\varphi,\psi)}\tau[m]))=0$ for all
$(\varphi,\psi)\in\mathcal{J}_{m,n,r}$ where $0\leq r\leq \min(m,n)$.

Since $\varphi$ and $\psi$ are order  preserving injective maps with $\im(\varphi)\cup\im(\psi)=[m+n-r]$, for any $(\varphi,\psi)\in\mathcal{J}_{m,n,r}$,
we have
\begin{align}
    \varphi^{-1}(m+n-r)\in\{\emptyset,m\}, &\qquad\varphi^{-1}(m+n-r-1)\in\{\emptyset,m-1,m\},\label{eq:varpsim+n-ror-1v}
\end{align}
and
\begin{align}
    \psi^{-1}(m+n-r)\in\{\emptyset,n\}, &\qquad\psi^{-1}(m+n-r-1)\in\{\emptyset,n-1,n\}.\label{eq:varpsim+n-ror-1p}
\end{align}
However,  for $k\in[m+n-r]$, $\varphi^{-1}(k)=\emptyset$ and $\psi^{-1}(k)=\emptyset$ can not hold at the same time.

If $\overline{\tau}=\imath$ or $q\geq2$, then it follows from Eqs.~\eqref{eq:varpsim+n-ror-1v} and \eqref{eq:varpsim+n-ror-1p} that
 both
$$(\sigma\shap_{(\varphi,\psi)}\tau[m])_{m+n-r-1}=\sigma_{\varphi^{-1}(m+n-r-1)}\bullet\tau_{\psi^{-1}(m+n-r-1)}[m]$$ and $$(\sigma\shap_{(\varphi,\psi)}\tau[m])_{m+n-r}=\sigma_{\varphi^{-1}(m+n-r)}\bullet\tau_{\psi^{-1}(m+n-r)}[m]$$
are in $-{\mathbb{P}}$. Thus,
 Eq.~\eqref{eq:varphisigmawgtossym} yields  $\varphi_2(\sigma\shap_{(\varphi,\psi)}\tau[m])=0$ for all $(\varphi,\psi)\in\mathcal{J}_{m,n,r}$. Hence $\varphi_2(\sigma\overline{\star}_{-1}\tau)=0$.

If $\overline{\tau}\neq\imath$ and $0\leq q\leq 1$, then we assume that $\tau=({\smallsmile}^{p},\tau_{p+1},\cdots,\tau_{p+t},{\smallsmile}^{q})$ where $t\geq1$ and $\tau_{p+1},\cdots,\tau_{p+t}$ are positive integers.
Thus, $\overline{\tau}=(\tau_{p+1},\cdots,\tau_{p+t})$.
We will show that $\varphi_2(\sigma\overline{\star}_{-1}\tau)=0$ by considering the
following three cases.

{\bf Case 1.} $\overline{\sigma}\neq\imath$. Then, by $j\geq 2$, we may assume that
$\sigma=\sigma_{1}\cdots\sigma_{r}\cdots\sigma_{m-1}\sigma_{m}$ where $\sigma_{r}>0$, $\sigma_{m-1}<0$ and $\sigma_m<0$.
For any $(\varphi,\psi)\in\mathcal{J}_{m,n,r}$, if $\st(\sigma\shap_{(\varphi,\psi)}\tau[m])\neq0$, then there are two subcases.

If $\varphi(m-1)\leq \psi(p+t)$, then, by Eq. \eqref{eq:diamondsp1}, we must have $\varphi(r)<\varphi(m-1)< \psi(p+t)$ and
\begin{align*}
(\sigma\shap_{(\varphi,\psi)}\tau[m])_{\varphi(r)}=\sigma_r>0,\
(\sigma\shap_{(\varphi,\psi)}\tau[m])_{\varphi(m-1)}=\sigma_{m-1}<0,\  (\sigma\shap_{(\varphi,\psi)}\tau[m])_{\psi(p+t)}=\tau_{p+t}>0.
\end{align*}
So, by Eq. \eqref{eq:varphisigmawgtossym}, $\varphi_2(\sigma\shap_{(\varphi,\psi)}\tau[m])=0$.

If $\psi(p+t)<\varphi(m-1)$, then $\psi(p+t)<\varphi(m-1)<\varphi(m)$, so Eqs.~\eqref{eq:varpsim+n-ror-1v} and \eqref{eq:varpsim+n-ror-1p}
guarantee that
\begin{align*}
(\sigma\shap_{(\varphi,\psi)}\tau[m])_{m+n-r-1}<0\ \text{and}\ (\sigma\shap_{(\varphi,\psi)}\tau[m])_{m+n-r}<0
\end{align*}
Again by  Eq. \eqref{eq:varphisigmawgtossym},  $\varphi_2(\sigma\shap_{(\varphi,\psi)}\tau[m])=0$.
Therefore, we always have $\varphi_2(\sigma\overline{\star}_{-1}\tau)=0$.

{\bf Case 2.} $\overline{\sigma}=\imath$ and $q=1$. Then $\sigma={\smallsmile}^m$ with $m\geq2$ and $\tau=({\smallsmile}^{p},\overline{\tau},{\smallsmile})$.
By  Corollary \ref{coro:Rindstarlambsecondrecuform},
we have
\begin{align*}
    \sigma\overline{\star}_{-1}\tau=&\st({\smallsmile}^m\star_{-1}({\smallsmile}^{p},\overline{\tau},{\smallsmile})[m])\\
    =&\st(({\smallsmile}^{m-1}\star_{-1}({\smallsmile}^{p},\overline{\tau},{\smallsmile})[m]){\smallsmile})+\st(({\smallsmile}^{m}\star_{-1}({\smallsmile}^{p},\overline{\tau})[m]){\smallsmile})
    -\st(({\smallsmile}^{m-1}\star_{-1}({\smallsmile}^{p},\overline{\tau})[m]){\smallsmile}).
\end{align*}
It follows from $m\geq2$ that  $\st(({\smallsmile}^{m-1}\star_{-1}({\smallsmile}^{p},\overline{\tau},{\smallsmile})[m]){\smallsmile})$ must be a linear combination of signed permutations of the form
$(\cdots,\smallsmile^k)$ with $k\geq2$, so, by Eq.~\eqref{eq:varphisigmawgtossym},
$\varphi_2(\st(({\smallsmile}^{m-1}\star_{-1}({\smallsmile}^{p},\overline{\tau},{\smallsmile})[m]){\smallsmile}))=0$. Hence
\begin{align*}
    \varphi_{2}(\sigma\overline{\star}_{-1}\tau)=&\varphi_{2}(\st(({\smallsmile}^{m}\star_{-1}({\smallsmile}^{p},\overline{\tau})[m]){\smallsmile}))
    -\varphi_{2}(\st(({\smallsmile}^{m-1}\star_{-1}({\smallsmile}^{p},\overline{\tau})[m]){\smallsmile})).
\end{align*}
We iterate the above argument up to $t$ times to obtain
\begin{align*}
    \varphi_2(\sigma\overline{\star}_{-1}\tau)
    =&\varphi_2(\st(({\smallsmile}^{m}\star_{-1}{\smallsmile}^{p}[m])\overline{\tau}[m]{\smallsmile}))
    -\varphi_2(\st(({\smallsmile}^{m-1}\star_{-1}{\smallsmile}^{p})\overline{\tau}[m]{\smallsmile})).
\end{align*}
Then from Lemma \ref{lem:0msta-10np} and Corollary \ref{coro:starnumbbino}, we have
\begin{align*}
    \varphi_2(\sigma\overline{\star}_{-1}\tau)
    =&-\left(\sum_{i=0}^{\min(m,p)}(-1)^i\binom{m+p-i}{i,m-i,p-i}\right){\st(\overline\tau)}
    +\left(\sum_{i=0}^{\min(m-1,p)}(-1)^i\binom{m+p-1-i}{i,m-1-i,p-i}\right){\st(\overline\tau)}
    =0.
\end{align*}

{\bf Case 3.} $\overline{\sigma}=\imath$ and $q=0$. Then $\sigma={\smallsmile}^m$ with $m\geq2$ and $\tau=({\smallsmile}^{p},\overline{\tau})$.  By  Corollary \ref{coro:Rindstarlambsecondrecuform},
we have
\begin{align*}
    \sigma\overline{\star}_{-1}\tau=&\st({\smallsmile}^m\star_{-1}({\smallsmile}^{p},\overline{\tau})[m])\\
    =&\st(({\smallsmile}^{m-1}\star_{-1}({\smallsmile}^{p},\overline{\tau})[m]){\smallsmile})
    +\st(({\smallsmile}^{m}\star_{-1}({\smallsmile}^{p},\tau_{p+1}\cdots\tau_{p+t-1}[m])\tau_{p+t}[m]).
\end{align*}
Completely analogous to the proof in Case $1$, we obtain
\begin{align*}
    \varphi_2(\sigma\overline{\star}_{-1}\tau)
    =&\varphi_2(\st(({\smallsmile}^{m-1}\star_{-1}{\smallsmile}^{p}[m])\overline{\tau}[m]){\smallsmile})
    +\varphi_2(\st(({\smallsmile}^{m}\star_{-1}{\smallsmile}^{p})\overline{\tau}[m]))=-{\st(\overline\tau)}+{\st(\overline\tau)}=0,
\end{align*}
as required.
\end{proof}

\begin{lemma}\label{lem:sigmast-1ol1}
For any signed permutation $\sigma$, we have
$\varphi_2(\sigma\overline{\star}_{-1}1^{-})=\varphi_2(1^{-}\overline{\star}_{-1}\sigma)=0$.
\end{lemma}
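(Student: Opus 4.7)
The plan is to compute both $\sigma \overline{\star}_{-1} 1^-$ and $1^- \overline{\star}_{-1} \sigma$ explicitly via the stuffle formula of Proposition~\ref{quasishustcoinc}, which is especially tractable because $1^-$ has length one. For the first product, $\sigma \overline{\star}_{-1} 1^- = \st(\sigma \star_{-1} (m+1)^-)$ with $m=\ell(\sigma)$ decomposes into $m+1$ insertion terms (the $r=0$ stratum $\mathcal{J}_{m,1,0}$, inserting $(m+1)^-$ at each of the $m+1$ positions in $\sigma$) plus, for each $k$ with $\sigma_k<0$, a merge term equal to $-\sigma$ (the $r=1$ stratum has coefficient $\lambda=-1$, and $\sigma_k\bullet(m+1)^- = \sigma_k$ when $\sigma_k<0$, while the product vanishes for $\sigma_k>0$). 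An analogous decomposition holds for $1^- \overline{\star}_{-1} \sigma$.

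By Lemma~\ref{lem:varphi2sigtausp101} I may assume $\sigma$ has no $+/-/+$ pattern, hence $\sigma = (\smallsmile^i, \overline{\sigma}, \smallsmile^j)$ for some $i,j\in\mathbb{N}$ with $\overline{\sigma}$ a (possibly empty) word of positive integers. If $\overline{\sigma} = \imath$, then every insertion and merge in both products has empty positive part and is killed by $\varphi_2$. For the first product, Lemma~\ref{lem:varphi2sigtau0isigtau0j} disposes of the case $j\geq 2$; for the second product, one observes that every term of $1^- \star_{-1} \sigma[1]$ either has a $+/-/+$ pattern or trailing negative block of size $q$ or $q+1$, so if $q\geq 2$ all terms are annihilated by $\varphi_2$. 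This reduces both statements to the cases $j,\,q \in \{0,1\}$ with $\overline{\sigma} \neq \imath$.

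The remaining work is a term-by-term enumeration guided by Eq.~\eqref{eq:varphisigmawgtossym}. For $\sigma \overline{\star}_{-1} 1^-$: the $i+1$ insertions landing among the leading negatives produce block structure $(\smallsmile^{i+1},\overline{\sigma},\smallsmile^j)$ and together contribute $(i+1)(-1)^j\st(\overline{\sigma})$; insertions inside $\overline{\sigma}$ create a $+/-/+$ pattern and vanish; the single insertion adjacent to $\overline{\sigma}$ produces block $(\smallsmile^i,\overline{\sigma},\smallsmile^{j+1})$, contributing $-\st(\overline{\sigma})$ when $j=0$ and $0$ when $j=1$; insertions further right (present only when $j=1$) all have trailing block of size $2$ and vanish. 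The $i+j$ merge terms each equal $-\sigma$ and contribute $-(i+j)(-1)^j\st(\overline{\sigma})$ in total. A short arithmetic check shows that the insertion total and merge total cancel in both cases $j=0$ and $j=1$.

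The analysis of $1^- \overline{\star}_{-1} \sigma$ is parallel, with $i,j$ replaced by $p,q$. The one subtlety is that now the merge terms are a priori distinct signed permutations: merging at each negative position $l$ of $\sigma[1]$ replaces $\sigma_l[1]$ by $1^-$, and after standardization one obtains a signed permutation whose positive part still standardizes to $\st(\overline{\sigma})$, because replacing a negative entry by $1^-$ and restandardizing permutes the absolute values but preserves the relative order of the positive entries. Hence all $p+q$ merges contribute the common value $(-1)^q\st(\overline{\sigma})$ under $\varphi_2$, and the same cancellation yields $0$. Verifying this invariance of the positive part under merging is the main technical point; everything else is combinatorial bookkeeping.
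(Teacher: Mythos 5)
Your proof is correct, and your reduction skeleton coincides with the paper's: both first dispose of the all-negative case and then invoke Lemma~\ref{lem:varphi2sigtausp101} and Lemma~\ref{lem:varphi2sigtau0isigtau0j} to reduce to $\sigma=({\smallsmile}^i,\overline{\sigma},{\smallsmile}^j)$ with $\overline{\sigma}\neq\imath$ and $j\in\{0,1\}$ (for the product $1^-\overline{\star}_{-1}\sigma$ with trailing block $\geq 2$ you give a direct observation, but you could equally cite Lemma~\ref{lem:varphi2sigtau0isigtau0j} with the roles of the two factors swapped, since its hypothesis only requires one of the trailing blocks to exceed $1$). Where you genuinely diverge is in the remaining cases: the paper dispatches them with one line, ``an elementary argument analogous to the proof of Case 2 in Lemma~\ref{lem:varphi2sigtau0isigtau0j},'' which means iterating the right-handed recursion of Corollary~\ref{coro:Rindstarlambsecondrecuform} to peel off letters and then invoking Lemma~\ref{lem:0msta-10np} and the binomial identity of Corollary~\ref{coro:starnumbbino}. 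You instead exploit the fact that $1^-$ is a single letter to enumerate all $m+1$ insertion terms and all merge terms of the stuffle expansion explicitly, and verify the cancellation $(i+1)(-1)^j+[j=0](-1)-(i+j)(-1)^j=0$ directly; your arithmetic checks out in both cases $j=0$ and $j=1$, and likewise for $1^-\overline{\star}_{-1}\sigma$. The one technical point you correctly flag and correctly resolve is that in $1^-\overline{\star}_{-1}\sigma$ the merged words are not literally $\sigma[1]$, but replacing a negative entry by $1^-$ preserves both the sign pattern and the relative order of the positive entries, so all merges contribute the common value $(-1)^q\st(\overline{\sigma})$ after applying $\varphi_2$. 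Your route buys a fully explicit, self-contained verification in place of the paper's argument by analogy, at the cost of a longer case enumeration; it avoids any appeal to Corollary~\ref{coro:starnumbbino}.
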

\begin{proof}
Let $\sigma$ be a signed permutation of $[m]$.  If $\sigma_{i}<0$ for all $i\in [m]$, then by Lemma \ref{lem:0msta-10np} and the definition of $\varphi_2$, we have
$\varphi_2(\sigma\overline{\star}_{-1}1^{-})=\varphi_2(1^{-}\overline{\star}_{-1}\sigma)=0$; otherwise,
by Lemmas \ref{lem:varphi2sigtausp101} and \ref{lem:varphi2sigtau0isigtau0j},
it suffices to consider the cases $\sigma=({\smallsmile}^{i},\overline{\sigma})$ and $\sigma=({\smallsmile}^{i},\overline{\sigma},{\smallsmile})$, where $\overline{\sigma}\neq\imath$.
The desired identities are then proved by an elementary argument analogous to the proof of Case $2$ in  Lemma \ref{lem:varphi2sigtau0isigtau0j}.
\end{proof}

\begin{lemma}\label{lem:phi2isigdjtau=olstar-1}
Let $i,j$ be nonnegative integers and $({\smallsmile}^i,\sigma)$, $({\smallsmile}^j,\tau)$  signed permutations,
where $\sigma=\sigma_1\sigma_2\cdots\sigma_m$ and $\tau=\tau_1\tau_2\cdots\tau_n$ with $\sigma_1>0,\tau_1>0$, $m\geq1$ and $n\geq1$.
Then
\begin{align*}
    \varphi_{2}(({\smallsmile}^i,\sigma)\overline{\star}_{-1}({\smallsmile}^j,\tau))=\varphi_{2}(\st(\sigma)\overline{\star}_{-1}\st(\tau)).
\end{align*}
\end{lemma}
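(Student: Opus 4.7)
The plan is to induct on $i+j$. The base case $i=j=0$ is immediate since then $\sigma$ and $\tau$ are already signed permutations in $\mathfrak{B}_m,\mathfrak{B}_n$, so $\st(\sigma)=\sigma$ and $\st(\tau)=\tau$. For the inductive step with $i+j\geq 1$, assume by symmetry $i\geq 1$. Two auxiliary facts drive the argument: (i) for any word $w$ on $-\PP\cup\PP$ and any negative letter $c$, $\varphi_2(\st(cw))=\varphi_2(\st(w))$, because prepending $c$ only extends the leading $\smallsmile$-block of the standardization while leaving the positive part and trailing-negative count untouched, and these are all that $\varphi_2$ sees by~\eqref{eq:varphisigmawgtossym}; and (ii) when two words $u,v$ on $-\PP\cup\PP$ have disjoint absolute values with those of $u$ all less than those of $v$, one has $\st(u\star_{-1}v)=\st(u)\overline{\star}_{-1}\st(v)$ as sums of signed permutations, because in each stuffle interleaving the relative abs-value orderings on both sides coincide (using also that a $\bullet$-merge of two negatives falls in $u$'s range), so standardization yields identical terms.

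Write $\pi=({\smallsmile}^i,\sigma)=a\pi'$ with $a<0$ and $\pi'=({\smallsmile}^{i-1},\sigma)$, and $\rho=({\smallsmile}^j,\tau)$. Apply the first-letter recursion of~\eqref{eq:inductquasishuffleprod} to $\pi\star_{-1}\rho[i+m]$ (permitted by Remark~\ref{rem:prodstarstwpshiftlarg}). When $j\geq 1$, the first letter $b$ of $\rho[i+m]$ is also negative with $a\bullet b=a$ by~\eqref{eq:diamondsp1}, so all three resulting terms start with a negative letter; stripping these via (i) and identifying each remainder via (ii) yields
\[
\varphi_2(\pi\overline{\star}_{-1}\rho)=\varphi_2(\st(\pi')\overline{\star}_{-1}\rho)+\varphi_2(\pi\overline{\star}_{-1}\st(\rho''))-\varphi_2(\st(\pi')\overline{\star}_{-1}\st(\rho'')),
\]
where $\rho'':=({\smallsmile}^{j-1},\tau)[i+m]$, and where $\st(\pi')=({\smallsmile}^{i-1},\sigma')$ and $\st(\rho'')=({\smallsmile}^{j-1},\tau')$ satisfy $\st(\sigma')=\st(\sigma)$ and $\st(\tau')=\st(\tau)$. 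The inductive hypothesis applies to each (the indices $(i-1,j),(i,j-1),(i-1,j-1)$ all have total strictly less than $i+j$), producing $\varphi_2(\st(\sigma)\overline{\star}_{-1}\st(\tau))$ in each term, and the alternating sum $1+1-1=1$ closes this case. When $j=0$ the $\bullet$-term vanishes since $\tau_1>0$, leaving only two terms; the first is handled exactly as above, while the second, prefixed by $\tau_1[i+m]>0$, vanishes under $\varphi_2\circ\st$: in every nonzero interleaving within $\pi\star_{-1}\tau''[i+m]$, the letter at position $1$ of $\pi$ (possibly $\bullet$-merged with a letter of $\tau''[i+m]$ but still negative) must precede the unmerged $\sigma_1>0$ at position $i+1$ (unmerged because $\bullet$ annihilates any pair containing a positive), so the positive prefix creates a $+,-,+$ pattern that kills $\varphi_2$ by~\eqref{eq:varphisigmawgtossym}.

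The main obstacle is the second auxiliary fact, that is, the compatibility of $\st$ with $\star_{-1}$ under the disjoint/all-smaller hypothesis, which requires careful attention to how $\bullet$-merges of two negatives interact with standardization. Once this identity is verified via the stuffle description of Proposition~\ref{quasishustcoinc}, the remainder is bookkeeping of the first-letter recursion together with observation (i).
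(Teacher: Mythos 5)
Your proposal is correct and follows essentially the same route as the paper's proof: induction on $i+j$, the first-letter recursion \eqref{eq:inductquasishuffleprod} applied to $({\smallsmile}^i,\sigma)\star_{-1}({\smallsmile}^j,\tau)[k]$, the telescoping $1+1-1=1$ of the three terms when $j\geq 1$, and the $+,-,+$ pattern argument killing the second term when $j=0$. Your two auxiliary facts (stripping a leading negative letter, and compatibility of $\st$ with $\star_{-1}$ for words with disjoint, ordered absolute values) are exactly what the paper uses implicitly when it invokes Eq.~\eqref{eq:varphisigmawgtossym} and Remark~\ref{rem:prodstarstwpshiftlarg}; you merely state them explicitly (note only that fact (i) requires $w\neq\imath$, which holds in every application here since $\sigma_1>0$).
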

\begin{proof}
We use induction on $i+j$. The case $i+j=0$ is trivial. Now suppose that $i+j\geq1$ and
the claim has already been shown for all smaller values of $i+j$. Since $i+j\geq1$, without loss of generality, we may assume that
$i\geq1$. Let $k=i+m$. If $j=0$, then,
by Eq.~\eqref{eq:inductquasishuffleprod},
\begin{align*}
    ({\smallsmile}^i,\sigma)\overline{\star}_{-1}({\smallsmile}^j,\tau)=&\st(({\smallsmile}^i,\sigma)\star_{-1}\tau[k])\\
    =&\st({\smallsmile}(({\smallsmile}^{i-1},\sigma)\star_{-1}\tau[k]))+\st(\tau_{1}[k](({\smallsmile}^i,\sigma)\star_{-1}(\tau_2\cdots\tau_n)[k])).
\end{align*}
Since $\tau_1>0,\sigma_1>0$ and $i\geq1$, by Eq.~\eqref{eq:varphisigmawgtossym},
\begin{align*}
    \varphi_2(({\smallsmile}^i,\sigma)\overline{\star}_{-1}({\smallsmile}^j,\tau))
    =\varphi_2(\st({\smallsmile}(({\smallsmile}^{i-1},\sigma)\star_{-1}\tau[k])))=\varphi_2(\st({\smallsmile}^{i-1},\sigma)\overline{\star}_{-1}\tau).
\end{align*}
By the induction hypothesis, we obtain $\varphi_{2}(({\smallsmile}^i,\sigma)\overline{\star}_{-1}({\smallsmile}^j,\tau))=\varphi_{2}(\st(\sigma)\overline{\star}_{-1}\st(\tau))$.

If $j\geq1$, then Eq.~\eqref{eq:inductquasishuffleprod} yields
\begin{align*}
    ({\smallsmile}^i,\sigma)\overline{\star}_{-1}({\smallsmile}^j,\tau)
    =&\st({\smallsmile}(({\smallsmile}^{i-1},\sigma)\star_{-1}({\smallsmile}^j,\tau)[k]))\\
    &+\st({\smallsmile}(({\smallsmile}^{i},\sigma)\star_{-1}({\smallsmile}^{j-1},\tau)[k]))-\st({\smallsmile}(({\smallsmile}^{i-1},\sigma)\star_{-1}({\smallsmile}^{j-1},\tau)[k])).
\end{align*}
Then by  Eq.~\eqref{eq:varphisigmawgtossym} and the induction hypothesis, we obtain
\begin{align*}
    \varphi_2(({\smallsmile}^i,\sigma)\overline{\star}_{-1}({\smallsmile}^j,\tau))
    =&\varphi_2(\st({\smallsmile}^{i-1},\sigma)\overline{\star}_{-1}({\smallsmile}^j,\tau))
    +\varphi_2(({\smallsmile}^{i},\sigma)\overline{\star}_{-1}\st({\smallsmile}^{j-1},\tau))\\
    &-\varphi_2(\st({\smallsmile}^{i-1},\sigma)\overline{\star}_{-1}\st({\smallsmile}^{j-1},\tau))\\
    =&\varphi_2(\st(\sigma)\overline{\star}_{-1}\st(\tau))+\varphi_2(\st(\sigma)\overline{\star}_{-1}\st(\tau))-\varphi_2(\st(\sigma)\overline{\star}_{-1}\st(\tau))\\
    =&\varphi_2(\st(\sigma)\overline{\star}_{-1}\st(\tau)),
\end{align*}
as required.
\end{proof}

\begin{prop}\label{prop:varphi2isalghom}
The $\bfk$-linear map $\varphi_2:\HSym\rightarrow \SSym$
is an algebra homomorphism.
\end{prop}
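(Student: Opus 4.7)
The plan is to prove the algebra homomorphism identity $\varphi_2(\sigma \overline{\star}_{-1} \tau) = \varphi_2(\sigma) \overline{\shap} \varphi_2(\tau)$ by case analysis on the structural types of $\sigma$ and $\tau$, invoking the four preceding lemmas to dispatch most cases and performing a direct computation with the right-hand recurrence from Corollary~\ref{coro:Rindstarlambsecondrecuform} for the residual non-vanishing case. When $\sigma=\imath$ or $\tau=\imath$ the identity is immediate, so I assume henceforth that both are non-identity signed permutations.

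The first step is to eliminate all cases in which either factor lies in the kernel of $\varphi_{2}$. By Eq.~(\ref{eq:varphisigmawgtossym}), this happens precisely when $\sigma$ has a positive-negative-positive pattern, when $\sigma=({\smallsmile}^{i},\overline{\sigma},{\smallsmile}^{j})$ with $j\geq 2$ (including purely negative $\sigma={\smallsmile}^{m}$ for $m\geq 2$, written as $({\smallsmile}^{0},\imath,{\smallsmile}^{m})$), or when $\sigma=1^{-}$. In these three situations Lemmas~\ref{lem:varphi2sigtausp101}, \ref{lem:varphi2sigtau0isigtau0j}, and \ref{lem:sigmast-1ol1} respectively show that $\varphi_{2}(\sigma\overline{\star}_{-1}\tau)=0$ as well, matching the vanishing right-hand side. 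Symmetric arguments handle the corresponding cases for $\tau$.

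After these reductions, the remaining case is $\sigma=({\smallsmile}^{i},\overline{\sigma},{\smallsmile}^{j})$ and $\tau=({\smallsmile}^{p},\overline{\tau},{\smallsmile}^{q})$ with $\overline{\sigma},\overline{\tau}\neq\imath$ and $j,q\in\{0,1\}$. Here $\varphi_{2}(\sigma)=(-1)^{j}\st(\overline{\sigma})$ and $\varphi_{2}(\tau)=(-1)^{q}\st(\overline{\tau})$, and by Lemma~\ref{lem:phi2isigdjtau=olstar-1} the leading blocks of negatives can be stripped, reducing the task to verifying
\begin{align*}
\varphi_{2}\bigl(\st(\overline{\sigma},{\smallsmile}^{j})\,\overline{\star}_{-1}\,\st(\overline{\tau},{\smallsmile}^{q})\bigr)=(-1)^{j+q}\,\st(\overline{\sigma})\,\overline{\shap}\,\st(\overline{\tau}).
\end{align*}
When $j=q=0$, both arguments are permutations, the weight-$(-1)$ quasi-shuffle collapses to the shifted shuffle since $\bullet$ vanishes on positive letters, and $\varphi_{2}$ restricts to the identity on $\SSym$; the identity is then immediate. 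For the three sub-cases with $(j,q)\neq(0,0)$ I will apply the right-hand recurrence of Corollary~\ref{coro:Rindstarlambsecondrecuform} to expand $ua\star_{-1}vb$, where $u,v$ are the positive parts and $a,b$ are the trailing negatives, and track which of the resulting three terms survive $\varphi_{2}$.

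The main obstacle is the sign bookkeeping in this final step. Each term in $(u\shap vb)a$ or $(ua\shap v)b$ in which a trailing negative is separated from the end by a positive produces a signed permutation with either a positive-negative-positive pattern or two or more consecutive trailing negatives, and is therefore annihilated by $\varphi_{2}$. Thus the only surviving contribution in each sub-case has the form (positive shuffle)(single trailing negative), with coefficient coming from the $-1$ weight in Corollary~\ref{coro:Rindstarlambsecondrecuform}. I expect the signs from the weight $-1$ and the factors $(-1)^{j},(-1)^{q}$ arising from $\varphi_{2}$ to conspire to give exactly $(-1)^{j+q}\st(\overline{\sigma})\overline{\shap}\st(\overline{\tau})$, with the delicate work lying in aligning these signs consistently across all three sub-cases.
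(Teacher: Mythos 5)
Your proposal is correct and follows essentially the same route as the paper's proof: the three kernel cases are dispatched by Lemmas~\ref{lem:varphi2sigtausp101}, \ref{lem:varphi2sigtau0isigtau0j} and \ref{lem:sigmast-1ol1}, the leading negatives are stripped by Lemma~\ref{lem:phi2isigdjtau=olstar-1}, and the remaining sub-cases on $(j,q)$ are settled by expanding via Corollary~\ref{coro:Rindstarlambsecondrecuform} and checking which terms survive $\varphi_2$, with the signs matching $(-1)^{j+q}$ exactly as you anticipate. The only cosmetic difference is that the paper uses the left-hand recurrence for the mixed case $(j,q)=(1,0)$ rather than the right-hand one uniformly; this does not change the argument.
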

\begin{proof}
It suffices to show that $\varphi_2(\sigma\overline{\star}_{-1}\tau)=\varphi_2(\sigma)\overline{\shap}\varphi_2(\tau)$ for all signed permutations $\sigma\in \mathfrak{B}_m$ and $\tau\in \mathfrak{B}_n$.
If one of $\sigma$ and $\tau$, say $\sigma$, is not of the form $({\smallsmile}^{i},\overline{\sigma},{\smallsmile}^{j})$ for some $i\in\mathbb{N}$, $j\in\{0,1\}$ and $\overline{\sigma}\neq\imath$, then from Eq.~\eqref{eq:varphisigmawgtossym} it follows that $\varphi_2(\sigma)=0$. On the other hand, by Lemmas \ref{lem:varphi2sigtausp101}, \ref{lem:varphi2sigtau0isigtau0j} and \ref{lem:sigmast-1ol1}, we have
$\varphi_2(\sigma\overline{\star}_{-1}\tau)=0$ so that $\varphi_2(\sigma\overline{\star}_{-1}\tau)=\varphi_2(\sigma)\overline{\shap}\varphi_2(\tau)$.

Now we assume that $\sigma=({\smallsmile}^{i},\overline{\sigma},{\smallsmile}^{j})$ and $\tau=({\smallsmile}^{p},\overline{\tau},{\smallsmile}^{q})$ where $j\in\{0,1\}$, $q\in\{0,1\}$,  $\overline{\sigma}\neq\imath$ and $\overline{\tau}\neq\imath$. By Lemma \ref{lem:phi2isigdjtau=olstar-1}, we can further assume that $i=p=0$,
and write $\sigma=(\sigma_1\sigma_2\cdots\sigma_s,{\smallsmile}^{j})$, $\tau=(\tau_1\tau_2\cdots\tau_t,{\smallsmile}^{q})$ for some positive integers $s$ and $t$,
where $\overline{\sigma}=\sigma_1\sigma_2\cdots\sigma_s$ and $\overline{\tau}=\tau_1\tau_2\cdots\tau_t$.

If $j=q=0$, then $\sigma=\overline{\sigma}$ and $\tau=\overline{\tau}$ are permutations so that
$\sigma\overline{\star}_{-1}\tau=\sigma\overline{\shap}\tau$ are linear combinations of permutations, and hence $\varphi_2(\sigma\overline{\star}_{-1}\tau)=\varphi_2(\sigma\overline{\shap}\tau)=\sigma\overline{\shap}\tau=\varphi_2(\sigma)\overline{\shap}\varphi_2(\tau)$.

If exactly one of $j$ and $q$ is $1$, say, $j=1$ and $q=0$, then
\begin{align*}
    \sigma\overline{\star}_{-1}\tau=&\st((\sigma_1\sigma_2\cdots\sigma_s,{\smallsmile})\star_{-1}(\tau_1\tau_2\cdots\tau_t)[m])\\
    =&\st((\sigma_1\sigma_2\cdots\sigma_s\star_{-1}\tau[m]){\smallsmile})
    +\st(((\sigma_1\sigma_2\cdots\sigma_s,{\smallsmile})\star_{-1}(\tau_1\tau_2\cdots\tau_{t-1})[m])\tau_{t}[m]).
\end{align*}
Note that $\sigma_1,\sigma_2,\cdots,\sigma_s$ are positive integers and $\tau=\overline{\tau}$ is a permutation. So it follows from Eq.~\eqref{eq:varphisigmawgtossym} that
\begin{align*}
    \varphi_2(\sigma\overline{\star}_{-1}\tau)
    =&\varphi_2(\st((\sigma_1\sigma_2\cdots\sigma_s\star_{-1}\tau[m]){\smallsmile}))\\
    =&-\varphi_2(\st(\sigma_1\sigma_2\cdots\sigma_s\shap\tau[m]))\\
    =&-\st(\sigma_1\sigma_2\cdots\sigma_s)\overline{\shap}\tau\\
    =&\varphi_2(\sigma)\overline{\shap}\varphi_2(\tau).
\end{align*}

If $j=q=1$, then by Corollary \ref{coro:Rindstarlambsecondrecuform},
\begin{align*}
    \sigma\star_{-1}\tau[m]=&(\overline{\sigma},{\smallsmile})\star_{-1}(\overline{\tau},{\smallsmile})[m]\\
    =&(\overline{\sigma}\star_{-1}(\overline{\tau},{\smallsmile})[m]){\smallsmile}+((\overline{\sigma},{\smallsmile})\star_{-1}\overline{\tau}[m]){\smallsmile}
    -(\overline{\sigma}\star_{-1}\overline{\tau}[m]){\smallsmile}.
\end{align*}
Since $s$ and $t$ are positive integers, we have
\begin{align*}
    (\overline{\sigma}\star_{-1}(\overline{\tau},{\smallsmile})[m]){\smallsmile}=(\sigma_1\cdots\sigma_{s-1}\star_{-1}(\overline{\tau},{\smallsmile})[m])\sigma_s{\smallsmile}
    +(\overline{\sigma}\star_{-1}\overline{\tau}[m]){\smallsmile}^2.
\end{align*}
Applying Eq.~\eqref{eq:varphisigmawgtossym} we conclude that $\varphi_2(\st((\overline{\sigma}\star_{-1}(\overline{\tau},{\smallsmile})[m]){\smallsmile}))=0$.
A completely analogous argument yields that
$\varphi_2(\st(((\overline{\sigma},{\smallsmile})\star_{-1}\overline{\tau}[m]){\smallsmile}))=0$. Thus,
\begin{align*}
    \varphi_2(\sigma\overline{\star}_{-1}\tau)=-\varphi_2(\st((\overline{\sigma}\star_{-1}\overline{\tau}[m]){\smallsmile}))
    =\varphi_2(\st(\overline{\sigma}\star_{-1}\overline{\tau}[m])).
\end{align*}
Note that $\overline{\sigma}$ and $\overline{\tau}$ are words on $\PP$, so
$$
\st(\overline{\sigma}\star_{-1}\overline{\tau}[m])=\st(\overline{\sigma}\shap\overline{\tau}[m])
=\st(\overline{\sigma})\overline{\shap}\st(\overline{\tau}),
$$
and hence Eq.~\eqref{eq:varphisigmawgtossym} guarantees that
$$
\varphi_2(\sigma\overline{\star}_{-1}\tau)=\varphi_2(\st(\overline{\sigma})\overline{\shap}\st(\overline{\tau}))
=\st(\overline{\sigma})\overline{\shap}\st(\overline{\tau})=\varphi_2(\sigma)\overline{\shap}\varphi_2(\tau),
$$
completing the proof.
\end{proof}

\begin{theorem}
The $\bfk$-linear map $\varphi_2:\HSym\rightarrow \SSym$
is a Hopf algebra homomorphism.
\label{thm:phi2}
\end{theorem}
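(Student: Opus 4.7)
The plan is to leverage Proposition \ref{prop:varphi2isalghom}, which already establishes that $\varphi_2$ is an algebra homomorphism, and reduce the theorem to verifying the coalgebra compatibility. Since both $\HSym$ and $\SSym$ are connected graded bialgebras (Theorem \ref{thm:WSisahopfalg} and the structure of the Malvenuto--Reutenauer Hopf algebra), any graded bialgebra morphism between them automatically commutes with the antipodes, so it suffices to verify that $\varphi_2$ preserves the counit and the coproduct. The counit condition $\epsilon_{\SSym}\circ\varphi_2=\epsilon_{\HSym}$ is immediate from the case definition of $\varphi_2$: for $\pi=\imath$ one gets $\varphi_2(\imath)=\imath$, while for $\pi\neq\imath$ the image is either $0$ or $\pm\st(\overline{\pi})$ with $\overline{\pi}\neq\imath$, hence lies in $\ker\epsilon_{\SSym}$.

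For the coproduct condition $\Delta_{\SSym}\circ\varphi_2=(\varphi_2\otimes\varphi_2)\circ\Delta_{\HSym}$, I would argue on a basis element $\pi\in\mathfrak{B}_m$ by cases according to the shape of $\pi$. In the favorable case $\pi=({\smallsmile}^i,\overline{\pi},{\smallsmile}^j)$ with $\overline{\pi}=\tau_1\cdots\tau_k$ and $j\in\{0,1\}$, I would evaluate the right-hand side $\sum_{p=0}^{m}\varphi_2(\st(\pi_1\cdots\pi_p))\otimes\varphi_2(\st(\pi_{p+1}\cdots\pi_m))$ position by position. Splits $p$ lying strictly inside the block of $i$ leading ${\smallsmile}$'s produce left factors of the form ${\smallsmile}^p$ with $p\ge1$, which are annihilated by $\varphi_2$; the split $p=i+q$ inside the positive block produces $\st(\tau_1\cdots\tau_q)\otimes\st(\tau_{q+1}\cdots\tau_k)$ (with the correct sign when $j=1$ contributes from the right factor); and splits past the positive block yield analogous contributions whose surviving terms recover the remaining summands of $(-1)^j\sum_{q=0}^{k}\st(\tau_1\cdots\tau_q)\otimes\st(\tau_{q+1}\cdots\tau_k)=\Delta_{\SSym}(\varphi_2(\pi))$.

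When $\varphi_2(\pi)=0$, so $\Delta_{\SSym}(\varphi_2(\pi))=0$, I would show that $(\varphi_2\otimes\varphi_2)\Delta_{\HSym}(\pi)$ also vanishes. There are three subcases to handle: (a) $\pi={\smallsmile}^m$ with $m\ge 2$, where every deconcatenation piece is a purely negative word and at least one side of every split is non-trivial, killing the tensor; (b) $\pi=({\smallsmile}^i,\overline{\pi},{\smallsmile}^j)$ with $\overline{\pi}\neq\imath$ and $j\ge 2$, where no split can leave an admissible block of trailing negatives on both sides simultaneously; and (c) $\pi$ has at least two positive blocks separated by negatives. Case (c) is the delicate one: by admissibility, the only splits $p$ contributing a non-zero tensor are those that cut just after the first positive block, taking either $0$ or exactly $1$ of the intervening negatives to the left. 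Moreover admissibility forces the total number of positive blocks to be exactly $2$ and the number of trailing negatives to be at most $1$. The two surviving splits then produce identical tensors $\st(T_1)\otimes\st(T_2)$ up to sign, and the parities of $j'$ on the left and right factors arrange so that the signs are opposite, giving exact cancellation.

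The main obstacle is controlling this last cancellation in case (c); the sign bookkeeping must be done carefully because moving one negative from the right block to the left block flips exactly one of the two $(-1)^{j'}$ factors produced by $\varphi_2$, and one needs this to happen symmetrically regardless of whether the tail of $\pi$ has $0$ or $1$ trailing negatives. Once these case analyses are assembled, the identity $\Delta_{\SSym}\circ\varphi_2=(\varphi_2\otimes\varphi_2)\circ\Delta_{\HSym}$ holds on all basis elements, completing the bialgebra homomorphism check and hence the theorem.
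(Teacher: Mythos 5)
Your proposal is correct and follows essentially the same route as the paper: invoke Proposition \ref{prop:varphi2isalghom} for the algebra part, reduce the Hopf condition to the bialgebra condition, and verify $(\varphi_2\otimes\varphi_2)\Delta=\Delta\varphi_2$ by the same case analysis on the block structure of $\pi$ (purely negative; one positive block with $0$ or $1$ trailing negatives; one positive block with $\ge 2$ trailing negatives; several positive blocks), with the decisive cancellation in the two-block case coming from the two surviving splits carrying signs $(-1)^0$ and $(-1)^1$ on the left factor while the right factor's sign is unaffected --- exactly the paper's $\sum_{i=0}^{1}(-1)^i=0$ computation.
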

\begin{proof}
By~\cite[Lemma 4.0.4]{Swe}, a bialgebra homomorphism between two Hopf algebras is automatically a Hopf algebra homomorphism.
According to Theorem \ref{thm:WSisahopfalg}, $\HSym$ is a Hopf algebra; by Proposition \ref{prop:varphi2isalghom}, $\varphi_2$ is an algebra homomorphism. Thus, it now suffices to show that $\varphi_2$ is a coalgebra homomorphism. For this we only need to show that $(\varphi_2\otimes\varphi_2)\Delta(\sigma)=\Delta(\varphi_2(\sigma))$
for all nonempty signed permutations $\sigma$.
If $\overline{\sigma}=\imath$, then $\sigma$ must be of the form $\sigma=({\smallsmile}^{\ell(\sigma)})$ with $\ell(\sigma)\geq1$, so that
$$
(\varphi_2\otimes\varphi_2)\Delta(\sigma)=\sum_{i=0}^{\ell(\sigma)}\varphi_2(\st({\smallsmile}^i))\otimes\varphi_2(\st({\smallsmile}^{\ell(\sigma)-i}))=0=\Delta(\varphi_2(\sigma)).
$$
If $\overline{\sigma}\neq\imath$, then by collecting the positive parts, we may assume that
\begin{align*}
    \sigma=({\smallsmile}^{r_1},\sigma_{r_1+1}\cdots\sigma_{r_1+s_1},\cdots,{\smallsmile}^{r_k},\sigma_{r_1+\cdots+r_k+s_1+\cdots+s_{k-1}+1}\cdots
    \sigma_{r_1+\cdots+r_k+s_1+\cdots+s_{k}},{\smallsmile}^{r_{k+1}})
\end{align*}
where $k\geq1$, $r_1\geq0$, $r_{k+1}\geq0$, $r_i\geq1$ for $2\leq i\leq k$, $s_i\geq1$ for $1\leq i\leq k$, and $\sigma_{j}\in \mathbb{P}$ for $r_1+\cdots+r_p+s_1+\cdots+s_{p-1}+1\leq j\leq r_1+\cdots+r_p+s_1+\cdots+s_p$, $1\leq p\leq k$.

If $k\geq3$, by
Eq.~\eqref{eq:varphisigmawgtossym}, it is straightforward to verify that $(\varphi_2\otimes\varphi_2)\Delta(\sigma)=0=\Delta(\varphi_2(\sigma))$.

If $k=2$, then
\begin{align*}
    \sigma=({\smallsmile}^{r_1},\sigma_{r_1+1}\cdots\sigma_{r_1+s_1},{\smallsmile}^{r_2},\sigma_{r_1+r_2+s_1+1}\cdots
    \sigma_{r_1+r_2+s_1+s_{2}},{\smallsmile}^{r_{3}}).
\end{align*}
Hence $\varphi_2(\sigma)=0$, which yields that $\Delta(\varphi_2(\sigma))=0$.
On the other hand, it follows from
Eqs.~\eqref{eq:Delta(sigma)WG} and \eqref{eq:varphisigmawgtossym} that
\begin{align*}
(\varphi_2\otimes\varphi_2)\Delta(\sigma)
=&\sum_{i=0}^{\ell(\sigma)}\varphi_2(\st(\sigma_1\cdots\sigma_i))\otimes \varphi_2(\st(\sigma_{i+1}\cdots\sigma_{\ell(\sigma)})) \\
=& \sum_{i=r_1+s_1}^{r_1+r_2+s_1}\varphi_2(\st(\sigma_1\cdots\sigma_i))\otimes \varphi_2(\st(\sigma_{i+1}\cdots\sigma_{\ell(\sigma)})) \\
=& \sum_{i=0}^{r_2}\varphi_2(\st({\smallsmile}^{r_1},\sigma_{r_1+1}\cdots\sigma_{r_1+s_1},{\smallsmile}^{i}))
\otimes \varphi_2(\st({\smallsmile}^{r_2-i},\sigma_{r_1+r_2+s_1+1}\cdots\sigma_{r_1+r_2+s_1+s_{2}},{\smallsmile}^{r_{3}})).
\end{align*}
Since $r_2\geq1$, together with Eq.~\eqref{eq:varphisigmawgtossym}, we obtain
\begin{align*}
(\varphi_2\otimes\varphi_2)\Delta(\sigma)
=\left(\sum_{i=0}^{1}(-1)^{i}\right) {\st(\sigma_{r_1+1},\cdots,\sigma_{r_1+s_1})}
\otimes\varphi_2(\st(\sigma_{r_1+r_2+s_1+1}\cdots\sigma_{r_1+r_2+s_1+s_{2}},{\smallsmile}^{r_{3}}))
=0.
\end{align*}
Hence  $(\varphi_2\otimes\varphi_2)\Delta(\sigma)=\Delta(\varphi_2(\sigma))$.

If $k=1$, then $\sigma=({\smallsmile}^{r_1},\sigma_{r_1+1}\cdots\sigma_{r_1+s_1},{\smallsmile}^{r_{2}})$.
Denote $c=\chi_{\{0,1\}}(r_2)$, that is,
$c=\begin{cases}
1,&\text{if}\ r_2\in \{0,1\},\\
0,&{\rm otherwise}.
\end{cases}
$
Then by Eq.~\eqref{eq:varphisigmawgtossym}, $\Delta(\varphi_2(\sigma))=(-1)^{r_2}c\Delta(\st(\overline{\sigma}))$, and hence
\begin{align*}
(\varphi_2\otimes&\varphi_2)\Delta(\sigma)=(\varphi_2\otimes\varphi_2)(\st\otimes\st)\left(\sum_{i=0}^{r_1}{\smallsmile}^i\otimes
({\smallsmile}^{r_1-i},\sigma_{r_1+1}\cdots\sigma_{r_1+s_1},{\smallsmile}^{r_2})\right.\\
&+\left.\sum_{i=1}^{s_1-1}({\smallsmile}^{r_1},\sigma_{r_1+1}\cdots\sigma_{r_1+i})\otimes (\sigma_{r_1+i+1}\cdots\sigma_{r_1+s_1},{\smallsmile}^{r_2})
+\sum_{i=0}^{r_2}({\smallsmile}^{r_1},\sigma_{r_1+1}\cdots\sigma_{r_1+s_1},{\smallsmile}^{i})\otimes {\smallsmile}^{r_2-i}\right)\\
=&(-1)^{r_2}c\left(1\otimes \st(\overline{\sigma})+\sum_{i=1}^{s_1-1}\st(\sigma_{r_1+1}\cdots\sigma_{r_1+i})\otimes\st(\sigma_{r_1+i+1}\cdots\sigma_{r_1+s_1})+ \st(\overline{\sigma})\otimes1\right)\\
=&(-1)^{r_2}c\Delta(\st(\overline{\sigma}))=\Delta(\varphi_2(\sigma)).
\end{align*}
Therefore, $\varphi_2$ is a coalgebra homomorphism, completing the proof.
\end{proof}

\section{Signed $P$-partitions and the Hopf algebra homomorphism $\mathcal{D}_2$}
\mlabel{sec:d2}

The purpose of this section is to prove that the linear map $\cald_2: \HSym \longrightarrow \RQSym$ defined in Eq.~\eqref{eq:defnD2} is a Hopf algebra homomorphism, by applying the notion of signed $P$-partitions which should be interesting on its own right.

\subsection{Signed $P$-partitions}\label{subsec:shadowP-partition}

We generalize Stanley's $P$-partitions~\cite{Sta12} to signed $P$-partitions, whose generating functions will be the fundamental weak quasi-symmetric functions.

A \name{signed labeled poset} $P$ of cardinality $n$ is a poset with $n$ elements labeled by distinct nonzero integers $a_1,a_2,\cdots,a_n$,
where $|a_i|\neq |a_j|$ for $i\neq j$ in $[n]$.
We let $<_P$ denote the partial order on $P$, and let $<$ denote the usual total order on the set $ \mathbb{Z}$ of integers.
Given a signed labeled poset $P$ with $n$ elements labeled by $a_1,a_2,\cdots,a_n$, taking $\st(a_1a_2\cdots a_n)=b_1b_2\cdots b_n$ gives a signed labeled poset $Q$ with its $n$ elements labeled by $b_1,b_2,\cdots,b_n$. Clearly, $b_1b_2\cdots b_n$ is a signed permutation of $[n]$ and $P$ is isomorphic to $Q$.
We call $Q$ the \name{standard signed labeled poset} of $P$ and write $Q=\st(P)$.

\begin{defn}\label{defsignedppart}
Let $P$ be a signed labeled poset.
A \name{signed $P$-partition} is a function $f:P\rightarrow \mathbb{P}$ such that for all  $i<_p j$ in $P$, we have
\begin{enumerate}
\item\label{item:defnpparta} $f(i)\leq f(j)$, and
\item\label{item:defnppartb}  $f(i)<f(j)$ whenever $i>\max(0,j)$.
\end{enumerate}
\end{defn}
Indeed, as in the case of $P$-partition \cite{Ste97},  it suffices to impose conditions \eqref{item:defnpparta} and \eqref{item:defnppartb} when $j$ covers $i$ in $P$, namely $i< j$ and no element $k\in P$ satisfies $i<k<j$.
We denote the set of all signed $P$-partitions by $A(P)$.
Notice that when the underlying set of $P$ is $[n]$,  we recover Stanley's $P$-partition up to a reversing of order.

For a word $w=a_1a_2\cdots a_n$ on the alphabet set $\mathbb{Z}\backslash\{0\}$, where $|a_i|\neq |a_j|$ if $i\neq j$, we may identify $w$ with the signed labeled poset with the total order $<_w$ defined by
\begin{align*}
a_1<_{w}a_2<_{w}\cdots<_{w}a_n.
\end{align*}
Then $A(w)$ is the set of functions $f:\{a_1,a_2,\cdots, a_n\}\rightarrow \mathbb{P}$ satisfying
\begin{align*}
f(a_1)\sim_1f(a_2)\sim_2\cdots\sim_{n-1}f(a_n),
\end{align*}
where for all $i\in[n-1]$, $\sim_i$ is $<$ if $a_i>\max(0,a_{i+1})$ and is $\leq$ otherwise.

Let $P$ be a signed labeled poset of cardinality $n$. Define $L(P)$ to be the set of linear extensions \cite{Sta12}  of $\st(P)$ that are themselves signed labeled posets.
Thus, $L(P)$ is the set of
signed permutations of $[n]$  extending $\st(P)$ to a total order; that is,
$L(P)$ is the set of signed permutations of $[n]$ such that $\pi\in L(P)$ if and only if $i<_{\st(P)} j$
implies $\pi^{-1}(i)<\pi^{-1}(j)$, as in Figure $1$.
\vspace{5mm}
\begin{center}
\setlength{\unitlength}{2mm}
\begin{picture}(6,6)
\linethickness{0.5pt}
\put(-36,0){$P:$}
\put(-29,1.5){\scriptsize{$2$}}
\put(-27.9,1.4){\line(1,-1){2}}
\put(-25.5,-2){\scriptsize{$8^-$}}
\put(-29.3,1.4){\line(-1,-1){2}}
\put(-32.5,-2){\scriptsize{$5$}}

\put(-21.5,0){$\st(P):$}
\put(-11.5,1.5){\scriptsize{$1$}}
\put(-10.4,1.4){\line(1,-1){2}}
\put(-8,-2){\scriptsize{$3^-$}}
\put(-11.8,1.4){\line(-1,-1){2}}
\put(-15,-2){\scriptsize{$2$}}

\put(-4,0){$L(P):$}
\put(3,5.5){\scriptsize{$1$}}
\put(3.4,1.7){\line(0,1){3}}
\put(3,0){\scriptsize{$2$}}
\put(3.4,-0.7){\line(0,-1){3}}
\put(3,-5.5){\scriptsize{$3^-$}}

\put(7,5.5){\scriptsize{$1$}}
\put(7.4,1.7){\line(0,1){3}}
\put(7,0){\scriptsize{$3^-$}}
\put(7.4,-0.7){\line(0,-1){3}}
\put(7,-5.5){\scriptsize{$2$}}

\put(-37,-9) {{Figure 1. The linear} {extensions of a  signed labeled poset.}}

\put(28,0){\scriptsize{$2$}}
\put(29,1){\line(1,1){2}}
\put(27.6,1){\line(-1,1){2}}
\put(23.5,3){\scriptsize{${1}^{-}$}}
\put(31.5,3){\scriptsize{${3}^{-}$}}
\put(28,-5.5){\scriptsize{${4}^{-}$}}
\put(28.5,-0.5){\line(0,-1){3}}
\put(15,-9){Figure 2. A  signed labeled poset.}
\end{picture}
\end{center}
\vspace{20mm}

When $P=[n]$, it follows from~\cite[Theorem 1]{Ge84} that  $A(P)=\cup_{\pi\in L(P)}A(\pi)$ where the union is disjoint.
In general we still have the union even though the union may not be disjoint, as shown in Example~\ref{exam:PS-partition}.
\begin{lemma}\label{lem:A(P)=supL(P)}
 $A(P)=\bigcup_{\pi\in L(P)}A(\pi)$.
\end{lemma}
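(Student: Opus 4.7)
The plan is to prove the two inclusions separately. The inclusion $\bigcup_{\pi\in L(P)} A(\pi)\subseteq A(P)$ is essentially by definition: for $\pi\in L(P)$ the total order determined by $\pi$ extends the partial order on $\st(P)$ (identified with $P$ via $a_i\mapsto b_i$), so any $f$ satisfying the chain of inequalities along $\pi$ automatically satisfies conditions (a) and (b) of Definition~\mref{defsignedppart} with respect to $<_P$. Along the way one checks that standardization preserves both the sign test $i>0$ and the relative test $i>\max(0,j)$, so the two signed $P$-partition axioms transfer cleanly between $P$ and $\st(P)$.

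For the nontrivial inclusion $A(P)\subseteq\bigcup_{\pi\in L(P)}A(\pi)$, given $f\in A(P)$ I will construct an explicit $\pi\in L(P)$ with $f\in A(\pi)$. Identifying $P$ with $\st(P)$, define a total order $\prec$ on the labels by $i\prec j$ whenever $f(i)<f(j)$, and for $f$-ties break the tie by the three rules
\begin{enumerate}[(i)]
\item every negative label precedes every positive label;
\item positive labels are ordered by the usual integer order $<$;
\item negative labels are ordered by any fixed linear extension of the restriction of $<_P$ to them.
\end{enumerate}
Reading off the $\prec$-order produces a word $\pi=\pi_1\pi_2\cdots\pi_n$ whose underlying letters are exactly those of $\st(P)$, hence a signed permutation in $\mathfrak{B}_n$.

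The crux is verifying that $\prec$ is a linear extension of $<_P$. For any $i<_P j$, axiom (a) yields $f(i)\le f(j)$, so it suffices to handle the equality case. In that case axiom (b) forces $i\le \max(0,j)$. The configuration ``$i$ positive, $j$ negative'' is excluded because then $\max(0,j)=0<i$; when both are positive, $i\le j$ together with $i\ne j$ gives $i<j$, matching rule (ii); and in the remaining situations ($i$ negative) compatibility holds by the choice made in rule (iii). Hence $\pi\in L(P)$.

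Finally I verify $f\in A(\pi)$: the weak inequalities $f(\pi_k)\le f(\pi_{k+1})$ are built into $\prec$. For strict inequality, suppose $\pi_k>\max(0,\pi_{k+1})$, which already forces $\pi_k>0$. If $f(\pi_k)=f(\pi_{k+1})$ held, then either $\pi_{k+1}<0$ (contradicting rule (i), which places negatives before positives among $f$-equals) or $0<\pi_{k+1}<\pi_k$ (contradicting rule (ii)); thus $f(\pi_k)<f(\pi_{k+1})$, as required. The main obstacle is the simultaneous compatibility of rules (i)--(iii) with $<_P$ on $f$-equal elements, which is exactly where the two signed $P$-partition axioms combine: the strict-inequality trigger $i>\max(0,j)$ in axiom (b) is precisely what rules out the problematic tie configurations.
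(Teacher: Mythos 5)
Your argument is correct, but it follows a genuinely different route from the paper's. The paper omits the proof entirely, pointing to Gessel's argument in \cite{Ge84}: induct on the number of incomparable pairs of $P$, split an incomparable pair $\{i,j\}$ into the two posets $P_{ij}$ and $P_{ji}$ obtained by adjoining $i<j$ resp.\ $j<i$, observe $A(P)=A(P_{ij})\cup A(P_{ji})$ and $L(P)=L(P_{ij})\sqcup L(P_{ji})$, and reduce to the case of a total order. You instead prove the two inclusions directly, and for the hard inclusion $A(P)\subseteq\bigcup_{\pi}A(\pi)$ your construction is a legitimate alternative: sorting by $f$-values and breaking ties by ``negatives before positives, positives by $<$, negatives by a linear extension of $<_P$'' is exactly the right order, and your case analysis, using the strict-inequality axiom in the contrapositive form $i\le\max(0,j)$, correctly excludes the only problematic tie configuration ($i>0$, $j<0$). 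The constructive route produces an explicit $\pi\in L(P)$ with $f\in A(\pi)$ at the cost of a case analysis; the inductive route is shorter but nonconstructive. Your identification of $P$ with $\st(P)$ and the remark that standardization preserves both the sign of a label and the test $i>\max(0,j)$ are also needed and correctly noted.

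The one step that is not ``essentially by definition'' is the strict-inequality axiom in the easy inclusion $A(\pi)\subseteq A(P)$ for $\pi\in L(P)$. Chaining the weak inequalities along $\pi$ gives axiom (a), but for axiom (b) you must show: if $i<_{\st(P)}j$ with $i>\max(0,j)$, and $i=\pi_k$, $j=\pi_l$ with $k<l$, then some $m\in[k,l-1]$ satisfies $\pi_m>\max(0,\pi_{m+1})$, so that the chain $f(\pi_k)\le\cdots\le f(\pi_l)$ contains a strict step and $f(i)<f(j)$ follows. This is true but needs an argument: note first that $i>0$. If $j<0$, let $m$ be the largest index in $[k,l-1]$ with $\pi_m>0$; then $\pi_{m+1}<0$ and $\pi_m>0=\max(0,\pi_{m+1})$. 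If $j>0$, then $i>j>0$; either some entry strictly between positions $k$ and $l$ is negative, in which case the smallest such position $m+1$ gives $\pi_m>0=\max(0,\pi_{m+1})$, or all of $\pi_k,\dots,\pi_l$ are positive, in which case the absence of a strict step would force $i=\pi_k\le\pi_l=j$, a contradiction. With this supplied, your proof is complete.
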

The proof of the identity is omitted, since it is completely analogous to the proof of~\cite[Theorem 1]{Ge84} by induction on the number of incomparable pairs of elements in $P$.

\begin{exam}\label{exam:PS-partition}
Let $P$ be the signed labeled poset in Figure 2.
Then $P=\{{1}^{-},2,{3}^{-},{4}^{-}\}$ and $A(P)$ is the set of functions $f:P\rightarrow \mathbb{P}$ satisfying $f({4}^{-})\leq f(2)<f({1}^{-})$ and $f(2)<f({3}^{-})$.

It is easy to see that $L(P)=\{\pi,\sigma\}$, where $\pi={4}^{-}2{1}^{-}\,{3}^{-}$ and $\sigma={4}^{-}2{3}^{-}\,{1}^{-}$.
By the definition of signed $P$-partitions,
$$A(\pi)=\{f:P\rightarrow \mathbb{P}|f({4}^{-})\leq f(2)<f({1}^{-})\leq f({3}^{-})\}$$
and
$$A(\sigma)=\{f:P\rightarrow \mathbb{P}|f({4}^{-})\leq f(2)<f({3}^{-})\leq f({1}^{-})\}.$$
Thus,  $A(P)=A(\pi)\cup A(\sigma)$.
Note that the functions $f:P\rightarrow \mathbb{P}$ satisfying $f({4}^{-})\leq f(2)<f({1}^{-})=f({3}^{-})$
are in both $A(\pi)$ and $A(\sigma)$, so the union is not disjoint.

The signed permutations $\pi$ and $\sigma$ have the same generating function
\begin{align*}
\Gamma(\pi)=\Gamma(\sigma)=\sum_{j_1\leq j_2<j_3\leq j_4}x_{j_1}^{\varepsilon}x_{j_2}x_{j_3}^{\varepsilon}x_{j_4}^{\varepsilon}=F_{(\varepsilon,1,\varepsilon^2)},
\end{align*}
where the generating function of a poset is given by Eq.~\eqref{generatfofposet}.
Since $f\in A(\pi)\cap A(\sigma)$ is equivalent to $f({4}^{-})\leq f(2)<f({1}^{-})=f({3}^{-})$, we have
\begin{align*}
\sum_{f\in A(\pi)\cap A(\sigma)}w(f)=\sum_{j_1\leq j_2<j_3=j_4}x_{j_1}^{\varepsilon}x_{j_2}x_{j_3}^{\varepsilon}x_{j_4}^{\varepsilon}
=\sum_{j_1\leq j_2<j_3}x_{j_1}^{\varepsilon}x_{j_2}x_{j_3}^{\varepsilon}=F_{(\varepsilon,1,\varepsilon)}
\end{align*}
and therefore
$\Gamma(P)=2F_{(\varepsilon,1,\varepsilon^2)}-F_{(\varepsilon,1,\varepsilon)}$.
\end{exam}

Let $X=\{x_n\,|\,n\in \PP\}$ be a set of commuting variables.
For a signed labeled poset $P$, we define the \name{weight} of $f\in A(P)$  to be the $\widetilde{\NN}$-exponent monomials
\begin{align*}
w(f):=\prod_{i\in P}x_{f(i)}^{\delta(i,\PP)},\  \ {\rm{where}}\
\delta(i,\PP)=\begin{cases}
1 ,&\text{if}\  i>0,\\
\varepsilon,&\text{otherwise},
\end{cases}
\end{align*}
and define the \name{generating function} for $P$ to be
\begin{align}\label{generatfofposet}
\Gamma(P):=\sum_{f\in A(P)}w(f)\in \bfk[[X]]_{\widetilde{\mathbb{N}}}.
\end{align}
Thus, for a signed permutation $\pi$ of $[n]$, the generating function
\begin{align}\label{eq:Gamma(pi)weightposet}
\Gamma(\pi)=\sum_{f\in A(\pi)}w(f)=\sum_{f\in A(\pi)}\prod_{i=1}^nx_{f(\pi_{i})}^{\delta(\pi_{i},\PP)}
\end{align}
is a weak quasi-symmetric function. Then the same holds for
$\Gamma(P)$ by Lemma~\ref{lem:A(P)=supL(P)}. Clearly, $\Gamma(P)=\Gamma(\st(P))$.
It is convenient to extend the notation a little further: If $\Lambda$ is a set of signed permutations,
then we set $\Gamma(\sum_{\pi\in\Lambda}\pi)=\sum_{\pi\in\Lambda}\Gamma(\pi)$.

The power series of the form $\Gamma(\pi)$ are important. It is easy to see that $\Gamma(\pi)$ depends only on the regularized composition of $\pi$.

\begin{theorem}\label{thm:Gamma(pi)=Fwcomp(pi)}
For any signed permutation $\pi$, we have $\Gamma(\pi)=F_{{\wcomp}(\pi)}$ for  $F_{{\wcomp}(\pi)}$ in Eq.~(\mref{eqdefn:fundmbasis1}).
\end{theorem}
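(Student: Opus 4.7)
The plan is to expand both sides as sums over increasing sequences of positive integers and match them term-by-term. By Eq.~\eqref{eq:Gamma(pi)weightposet}, we have
\begin{align*}
\Gamma(\pi) = \sum_{f\in A(\pi)}\prod_{l=1}^n x_{f(\pi_l)}^{\delta(\pi_l,\PP)}.
\end{align*}
Identifying $\pi$ with the signed labeled poset given by the total order $\pi_1<_\pi\pi_2<_\pi\cdots<_\pi\pi_n$ and using the reformulation right after Definition~\ref{defsignedppart}, the set $A(\pi)$ corresponds to sequences $j_l:=f(\pi_l)$ of positive integers satisfying $j_l\sim_l j_{l+1}$, where $\sim_l$ is $<$ whenever $\pi_l>\max(0,\pi_{l+1})$ (i.e., $l\in WD(\pi)\cap[n-1]$) and $\leq$ otherwise. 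Thus
\begin{align*}
\Gamma(\pi) = \sum_{\substack{1\leq j_1\leq\cdots\leq j_n\\ l\in WD(\pi)\cap[n-1]\Rightarrow j_l<j_{l+1}}}\prod_{l=1}^n x_{j_l}^{\delta(\pi_l,\PP)}.
\end{align*}

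The first main step is to show $D(\wcomp(\pi))=WD(\pi)$. Writing $\pi$ in its canonical block form
\begin{align*}
\pi=({\smallsmile}^{i_1},\pi_{L_1+1}\cdots\pi_{j_1},{\smallsmile}^{i_2},\ldots,{\smallsmile}^{i_k},\pi_{L_k+1}\cdots\pi_{j_k},{\smallsmile}^{i_{k+1}})
\end{align*}
with $L_p=j_{p-1}+i_p$ and $s_p=j_p-L_p$, and unpacking Eq.~\eqref{eq:lemwcomppi} into the canonical form $(\varepsilon^{I_1},S_1,\ldots,\varepsilon^{I_K},S_K,\varepsilon^{I_{K+1}})$, each partial sum $b_q$ attached to the $t$-th positive part of the $p$-th block simplifies to $L_p+D(c_p)[t]$, where $c_p=\mathrm{comp}(\st(\pi_{L_p+1}\cdots\pi_{j_p}))$ has descent set $D(\mathrm{st}(\pi_{L_p+1}\cdots\pi_{j_p}))\cup\{s_p\}$. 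Since the former contributes the usual descents within the $p$-th positive block of $\pi$, and the latter contributes $j_p$, these together give exactly $WD(\pi)$ (recalling that $j_p\in WD(\pi)$ whenever $p<k$, or $p=k$ with either $i_{k+1}\geq 1$ or $\pi_n>0$).

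The second step is to match the exponents in the product formula~\eqref{eqdefn:fundmbasis1} for $F_{\wcomp(\pi)}$: the $\varepsilon$-runs $\varepsilon^{i_p}$ in $\wcomp(\pi)$ occupy precisely the positions $\{j_{p-1}+1,\ldots,L_p\}$, which are exactly the positions $l$ in $\pi$ with $\pi_l<0$, while the positive-part positions $\{L_p+1,\ldots,j_p\}$ are exactly the positions with $\pi_l>0$. Consequently, the weight in Eq.~\eqref{eqdefn:fundmbasis1} at index $l$ is $x_{j_l}^\varepsilon$ or $x_{j_l}$ according as $\pi_l<0$ or $\pi_l>0$, which coincides with $x_{j_l}^{\delta(\pi_l,\PP)}$.

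Combining the two steps, $F_{\wcomp(\pi)}$ and $\Gamma(\pi)$ are sums over the same set of sequences $1\leq j_1\leq\cdots\leq j_n$ (strict at $WD(\pi)\cap[n-1]$) of the same monomial, yielding the identity. The main obstacle is purely a bookkeeping one: carefully aligning the positional index $l\in[n]$ of $\pi$ with the positional index in the canonical form of $\wcomp(\pi)$, and treating the boundary case $p=k$ with $i_{k+1}=0$ (so that $n=j_k\in WD(\pi)$ via the second clause of Eq.~\eqref{eq:defn:descentofpi}) consistently with the convention that the condition $l\in D(\wcomp(\pi))\Rightarrow j_l<j_{l+1}$ is vacuous at $l=n$.
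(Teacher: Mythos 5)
Your proposal is correct and follows essentially the same route as the paper's proof: expand $\Gamma(\pi)$ as a sum over weakly increasing sequences with strict increases forced exactly at the positions $\pi_i>\max(0,\pi_{i+1})$, identify $WD(\pi)=D(\wcomp(\pi))$ via Eqs.~\eqref{eq:defn:descentofpi} and \eqref{eq:lemwcomppi}, and match exponents ($\varepsilon$ at negative positions, $1$ at positive positions) against Eq.~\eqref{eqdefn:fundmbasis1}. You simply spell out the block-form bookkeeping that the paper leaves implicit, and your handling of the boundary case $i_{k+1}=0$ is consistent with the paper's conventions.
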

\begin{proof}
Let $\pi$ be a signed permutation of $[n]$ and let $m_i=f(\pi_{i})$, $i\in[n]$. Then by Eq.~\eqref{eq:Gamma(pi)weightposet},
\begin{align*}
    \Gamma(\pi)=\sum_{\substack{m_1\leq m_2\leq \cdots \leq m_{n} \\ \pi_{i}>\max(0,\pi_{i+1}) \Rightarrow m_{i}<m_{i+1}}} x_{m_1}^{\delta(\pi_1,{\PP})}x_{m_2}^{\delta(\pi_2,{\PP})}\cdots x_{m_n}^{\delta(\pi_{n},{\PP})}.
\end{align*}
By Eq.~\eqref{eq:defn:descentofpi}, for each $i\in[n-1]$, $\pi_{i}>\max(0,\pi_{i+1})$ is equivalent to $i\in WD(\pi)$. Hence in view of Eq.~\eqref{eq:lemwcomppi}, the equation $WD(\pi)=D({\wcomp}(\pi))$ holds.
It follows from Eq.~\eqref{eqdefn:fundmbasis1}  that $\Gamma(\pi)=F_{{\wcomp}(\pi)}$, as required.
\end{proof}

\subsection{$\mathcal{D}_2$ is a Hopf algebra homomorphism}
\mlabel{ss:d2}

Analogous to~\cite[Proposition 4.6]{Mal94}, we have
\begin{lemma}\label{lem:GaPuniQ}
Let $P$ and $Q$ be two disjoint signed labeled posets. Then $\Gamma(P\cup Q)=\Gamma(P)\Gamma(Q)$.
\end{lemma}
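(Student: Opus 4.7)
The plan is to exhibit a weight-preserving bijection between $A(P\cup Q)$ and $A(P)\times A(Q)$ and then sum.

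First I would set up the bijection. Since $P$ and $Q$ are disjoint signed labeled posets (so their labels have pairwise distinct absolute values), the poset $P\cup Q$ carries the partial order in which the relations inside $P$ and those inside $Q$ are preserved, and no element of $P$ is comparable to any element of $Q$. Consequently, Definition~\ref{defsignedppart} imposes constraints on a function $f: P\cup Q\to\PP$ only between pairs $i<_{P\cup Q} j$ which lie entirely in $P$ or entirely in $Q$. Hence restriction
\[
\rho: A(P\cup Q)\longrightarrow A(P)\times A(Q),\qquad f\longmapsto (f|_P,\, f|_Q),
\]
is well-defined, and it is a bijection with inverse given by pasting any pair $(g,h)\in A(P)\times A(Q)$ to the function on $P\cup Q$ that agrees with $g$ on $P$ and with $h$ on $Q$.

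Next I would check that $\rho$ preserves weights. By the definition of $w$,
\[
w(f)=\prod_{i\in P\cup Q} x_{f(i)}^{\delta(i,\PP)}=\Bigl(\prod_{i\in P} x_{f(i)}^{\delta(i,\PP)}\Bigr)\Bigl(\prod_{j\in Q} x_{f(j)}^{\delta(j,\PP)}\Bigr)=w(f|_P)\,w(f|_Q),
\]
so $w(f)=w(g)w(h)$ under the bijection. Summing over $A(P\cup Q)$ and using the bijection gives
\[
\Gamma(P\cup Q)=\sum_{f\in A(P\cup Q)} w(f)=\sum_{(g,h)\in A(P)\times A(Q)} w(g)w(h)=\Gamma(P)\Gamma(Q),
\]
as claimed.

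There is essentially no obstacle: the only subtle point is ensuring that the disjointness hypothesis guarantees both that the union is again a signed labeled poset (the absolute values of labels remain pairwise distinct) and that there are no cross-relations forcing comparisons in Definition~\ref{defsignedppart}\ref{item:defnpparta}\ref{item:defnppartb} between $P$-labels and $Q$-labels. Once this is noted, the argument is a straight Fubini/bijection proof.
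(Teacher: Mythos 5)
Your proposal is correct and follows essentially the same route as the paper: both establish the weight-preserving bijection $f\mapsto (f|_P, f|_Q)$ between $A(P\cup Q)$ and $A(P)\times A(Q)$ and then factor the sum of weights. The extra care you take in noting that disjointness rules out any cross-constraints between $P$-labels and $Q$-labels is a welcome elaboration of what the paper leaves implicit, but it is not a different argument.
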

\begin{proof}
Since  $P$ and $Q$ are disjoint, the correspondence $f\mapsto (f|_P,f|_Q)$ induces a bijection between $A(P\cup Q)$ and $A(P)\times A(Q)$.
It follows at once that
\begin{align*}
  \Gamma(P\cup Q)=\sum_{f\in A(P\cup Q)}\prod_{i\in P\cup Q} x_{f(i)}^{\delta(i,\PP)}
                 =\left(\sum_{g\in A(P)}\prod_{i\in P} x_{g(i)}^{\delta(i,\PP)}\right)\left(\sum_{h\in A(Q)}\prod_{j\in Q} x_{h(j)}^{\delta(j,\PP)}\right)
                 =\Gamma(P)\Gamma(Q),
\end{align*}
as required.
\end{proof}

\begin{prop}\label{prop:Gamma(sigma2mcdottau[m])}
For any signed permutations $\sigma$ and $\tau$, we have
$\Gamma(\sigma)\Gamma(\tau)=\Gamma(\sigma\overline{\star}_{-1}\tau)$.
\end{prop}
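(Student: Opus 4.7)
My plan is as follows.

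The overall idea is to reduce to $P$-partition counts on the disjoint union of two chains and then match this with the stuffle expansion of $\star_{-1}$. First, observe that $\Gamma$ is invariant under $\st$ (it depends only on signs of the letters, not their particular values), so
\[
\Gamma(\sigma \overline{\star}_{-1} \tau) \;=\; \Gamma\bigl(\st(\sigma\star_{-1}\tau[m])\bigr) \;=\; \Gamma(\sigma\star_{-1}\tau[m]),
\]
and similarly $\Gamma(\tau[m])=\Gamma(\tau)$. Now view $\sigma$ and $\tau[m]$ as signed labeled chains with \emph{disjoint} labels, and let $P := \sigma\sqcup\tau[m]$ be their disjoint union. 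By Lemma \ref{lem:GaPuniQ},
\[
\Gamma(\sigma)\Gamma(\tau) \;=\; \Gamma(\sigma)\Gamma(\tau[m]) \;=\; \Gamma(P).
\]
So the proposition reduces to proving $\Gamma(P) = \Gamma(\sigma\star_{-1}\tau[m])$.

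By Proposition \ref{quasishustcoinc}, the right-hand side equals $\sum_{r\geq 0}(-1)^r\sum_{(\varphi,\psi)\in\mathcal{J}_{m,n,r}}\Gamma(\sigma\shap_{(\varphi,\psi)}\tau[m])$. Because $\bullet$ vanishes on any pair that is not a negative-negative pair, the only non-trivial overlaps are at negative letters of $\sigma$ with negative letters of $\tau[m]$, and such an overlap produces a single negative letter. The combinatorial heart of the proof is to show that for each $f\in A(P)$ the monomial $w(f)$ appears in $\Gamma(\sigma\star_{-1}\tau[m])$ with signed multiplicity exactly one. I would proceed by an inclusion-exclusion over the set
\[
T(f) := \bigl\{(i,j)\,:\,\sigma_i<0,\ \tau_j[m]<0,\ f(\sigma_i)=f(\tau_j[m])\bigr\}.
\]
For each $S\subseteq T(f)$, contracting the pairs in $S$ accounts for $r=|S|$ and contributes sign $(-1)^{|S|}$; for each unchosen pair in $T(f)\setminus S$, one checks using Eq.~\eqref{eq:defn:descentofpi} that both interleaving orders of the pair yield a linearization $\pi$ in which $f$ is a valid signed $\pi$-partition, since two negative letters placed adjacently never produce a weak descent. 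Thus $f$ contributes exactly
\[
\sum_{S\subseteq T(f)}(-1)^{|S|}\,2^{|T(f)|-|S|} \;=\; (2-1)^{|T(f)|} \;=\; 1,
\]
and the other types of $f$-ties (positive-positive or mixed-sign cross-chain ties, and within-chain ties) are pinned to a unique valid linearization by the weak-descent conditions. Summing $w(f)$ over $f\in A(P)$ then yields the claimed identity.

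The main technical obstacle is the case analysis in the last sentence above: one must verify that cross-chain ties of mixed sign and within-chain ties adjacent to $T(f)$ both contribute a unique, unsigned placement, so that inclusion-exclusion only "bites" at the negative-negative cross-chain ties. Concretely, when $\sigma_i$ and $\tau_j[m]$ have opposite signs but the same $f$-value, exactly one of the two adjacency orders is compatible with $f\in A(\pi)$ because the other order would force a weak descent hence a strict inequality; and contractions in these cases are zero by Eq.~\eqref{eq:diamondsp1}. A cleaner alternative I would try if the direct inclusion-exclusion gets unwieldy is induction on $\ell(\sigma)+\ell(\tau)$, using Corollary \ref{coro:Rindstarlambsecondrecuform} to split $\sigma\star_{-1}\tau[m]$ into three pieces and decomposing $\Gamma(\sigma)\Gamma(\tau)$ correspondingly by conditioning on whether $f(\sigma_1)<g(\tau_1)$, $>$, or $=$; the equality case then matches precisely the contraction term $-(a\bullet b')(u\star_{-1}v[m])$, and the sign structure of $\star_{-1}$ with weight $-1$ makes the bookkeeping work out.
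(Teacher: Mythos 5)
Your reduction to $\Gamma(P)=\Gamma(\sigma\star_{-1}\tau[m])$ with $P=\sigma\sqcup\tau[m]$ via Lemma~\ref{lem:GaPuniQ} and Proposition~\ref{quasishustcoinc} is correct and is where the paper also starts, but the inclusion--exclusion you build on top of it has a genuine counting gap. You assert that the number of interleavings compatible with $f$ in which exactly the pairs of $S\subseteq T(f)$ are contracted equals $2^{|T(f)|-|S|}$, each uncontracted negative--negative tie contributing an independent factor of $2$. This fails as soon as a single $f$-value is shared by more than two negative letters across the two chains, because the pairs in $T(f)$ then share elements: not every $S\subseteq T(f)$ is realizable (the maps $\varphi,\psi$ are injective, so contractions form a partial matching), and the relative-order choices for the uncontracted ties are not independent. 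For example, take $\sigma=1^{-}2^{-}$, $\tau=1^{-}$ and $f$ constant; then $T(f)=\{(1,1),(2,1)\}$, and the true counts are $3$ shuffles, $1+1$ single contractions, and $0$ double contractions, giving $3-2+0=1$, whereas your formula asserts $4-2\cdot2+1$. The totals agree only because the proposition is true; the intermediate counts are wrong, so the argument does not establish anything. The correct local count, for a value class consisting of $p$ negative letters of $\sigma$ and $q$ negative letters of $\tau[m]$, is $\sum_{i}(-1)^i\binom{p+q-i}{i,p-i,q-i}=1$, which is exactly Lemma~\ref{lem:0msta-10np} together with Corollary~\ref{coro:starnumbbino}; to salvage the direct approach you would need to factor the count over value classes and invoke that identity in place of the $2^{|T(f)|}$-type independence.

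Your fallback is the right proof and is essentially the one the paper gives: with $\sigma'=\sigma_2\cdots\sigma_m$ and $\tau'=\tau_2\cdots\tau_n$, one has $A(\sigma\cup\tau[m])=A(\sigma_1(\sigma'\cup\tau[m]))\cup A(\tau_1[m](\sigma\cup\tau'[m]))$, and inclusion--exclusion on this (generally non-disjoint) union reproduces the three terms of the recursion~\eqref{eq:inductquasishuffleprod} defining $\star_{-1}$. The one point you must make explicit is that a function in the intersection satisfies $f(\sigma_1)=f(\tau_1[m])$ with \emph{neither} strictness condition of Definition~\ref{defsignedppart} triggered, which forces $\sigma_1<0$ and $\tau_1<0$; this is precisely why the overlap term carries the factor $\chi_{-\PP}(\sigma_1)\chi_{-\PP}(\tau_1)$ matching $a\bullet b$ in Eq.~\eqref{eq:diamondsp1}, and why the weights agree after merging the two elements (since $x^{\varepsilon}x^{\varepsilon}=x^{\varepsilon}$). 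Note also that conditioning on $f(\sigma_1)<f(\tau_1[m])$, $>$, or $=$ as you phrase it is slightly off: the two sets in the union overlap exactly on the equality case rather than partitioning the ties, and this is what produces the subtracted third term.
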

\begin{proof}
Let $\sigma=\sigma_1\cdots\sigma_m\in \mathfrak{B}_m$, $\tau=\tau_1\cdots\tau_n\in \mathfrak{B}_n$, and write $\sigma'=\sigma_2\cdots\sigma_m$, $\tau'=\tau_2\cdots\tau_n$.
By Lemma \ref{lem:GaPuniQ},
\begin{align*}
 \Gamma(\sigma)\Gamma(\tau)=\Gamma(\sigma\cup\tau[m])=\sum_{f\in A(\sigma\cup\tau[m])}w(f).
\end{align*}
Since
\begin{align*}
    A(\sigma\cup\tau[m])=A(\sigma_1(\sigma'\cup\tau[m]))\cup A(\tau_1[m](\sigma\cup \tau'[m])),
\end{align*}
we have
\begin{align}\label{eq:Gasigtau=GasigGatauhom}
 \Gamma(\sigma)\Gamma(\tau)=\Gamma(\sigma_1(\sigma'\cup\tau[m]))+\Gamma(\tau_1[m](\sigma\cup \tau'[m]))-\sum_{f\in B}w(f),
\end{align}
where $B:=A(\sigma_1(\sigma'\cup\tau[m]))\cap A(\tau_1[m](\sigma\cup \tau'[m]))$.

If $B$ is a nonempty set, then for any $f\in B$, we must have $f(\sigma_1)=f(\tau_1[m])$.
Define
$$  g(\sigma_1):= f(\sigma_1)=f(\tau_1[m]),\ g(\sigma_i):=f(\sigma_i),\ g(\tau_j[m]):=f(\tau_j[m]), 2\leq i\leq m, 2\leq j\leq n.
$$
Then $g$ is in $A(\sigma_1(\sigma'\cup \tau'[m]))$, giving rise to a bijection between $B$ and $A(\sigma_1(\sigma'\cup \tau'[m]))$.
By Definition \ref{defsignedppart}, $f(\sigma_1)=f(\tau_1[m])$ implies
$\sigma_1<0$ and $\tau_1<0$, so that $\delta(\sigma_1,\PP)=\delta(\tau_1,\PP)=\varepsilon$.
Hence, for any $f\in B$ corresponding to $g\in A(\sigma_1(\sigma'\cup \tau'[m]))$, we have
$$
w(f)=x_{f(\sigma_1)}^\varepsilon\prod_{i=2}^mx_{f(\sigma_i)}^{\delta(\sigma_i,\PP)}\cdot x_{f(\tau_1[m])}^\varepsilon\prod_{j=2}^nx_{f(\tau_j[m])}^{\delta(\tau_j[m],\PP)}
=x_{g(\sigma_1)}^\varepsilon\prod_{i=2}^mx_{g(\sigma_i)}^{\delta(\sigma_i,\PP)}\cdot \prod_{j=2}^nx_{g(\tau_j[m])}^{\delta(\tau_j[m],\PP)}=w(g).
$$
Therefore, Eq.~\eqref{eq:Gasigtau=GasigGatauhom} is equivalent to
\begin{align}\label{eq:Gasigtau=GasigGatauhom2}
 \Gamma(\sigma)\Gamma(\tau)=\Gamma(\sigma_1(\sigma'\cup\tau[m]))+\Gamma(\tau_1[m](\sigma\cup \tau'[m]))-\chi_{-\PP}(\sigma_1)\chi_{-\PP}(\tau_1)\Gamma(\sigma_1(\sigma'\cup \tau'[m])).
\end{align}
Note that $\Gamma(w)=\Gamma(\st(w))$ for any word on $-\PP\cup\PP$ with $|w_i|\neq |w_j|$ if $i\neq j$, and that Eqs.~ \eqref{eq:defnsigmacdotlambdatau} and \eqref{eq:Gasigtau=GasigGatauhom2}
satisfy  the same recursion, so
$\Gamma(\sigma)\Gamma(\tau)=\Gamma(\sigma\overline{\star}_{-1}\tau)$.
\end{proof}

Analogous to~\cite[Corollairy 4.10]{Mal94}, we obtain the product rule of two fundamental weak quasi-symmetric functions.
\begin{coro}\label{coro:productruleofweakfdqsf}
For any signed permutations $\pi,\sigma$, we have
\begin{align*}
F_{\wcomp(\pi)}F_{\wcomp(\sigma)} =\sum_{\tau}C_{\pi,\sigma}^{\tau}F_{\wcomp(\tau)},
\end{align*}
where $\tau$ range over all signed permutations and $C_{\pi,\sigma}^{\tau}$ is the coefficient of $\tau$ in the product $\pi\overline{\star}_{-1}\sigma$.
\end{coro}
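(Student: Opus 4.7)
The plan is to read off the corollary directly from the two preceding results, namely Theorem~\ref{thm:Gamma(pi)=Fwcomp(pi)} (which identifies $\Gamma(\pi)$ with $F_{\wcomp(\pi)}$) and Proposition~\ref{prop:Gamma(sigma2mcdottau[m])} (which identifies the product $\Gamma(\pi)\Gamma(\sigma)$ with $\Gamma(\pi\overline{\star}_{-1}\sigma)$). Since $\Gamma$ is $\bfk$-linear on signed permutations by the extension convention stated just after Eq.~\eqref{eq:Gamma(pi)weightposet}, expanding the shifted quasi-shuffle product inside $\Gamma$ will distribute the coefficients $C_{\pi,\sigma}^{\tau}$ out of the sum.

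More precisely, I would proceed in three short steps. First, expand the product in $\HSym$ as $\pi\overline{\star}_{-1}\sigma=\sum_{\tau}C_{\pi,\sigma}^{\tau}\tau$, where the sum is over signed permutations $\tau$, using the definition of $C_{\pi,\sigma}^{\tau}$ from the statement. Second, apply Proposition~\ref{prop:Gamma(sigma2mcdottau[m])} on the left and linearity of $\Gamma$ on the right to obtain
\begin{align*}
\Gamma(\pi)\Gamma(\sigma)=\Gamma(\pi\overline{\star}_{-1}\sigma)=\sum_{\tau}C_{\pi,\sigma}^{\tau}\Gamma(\tau).
\end{align*}
Third, replace each $\Gamma$ occurrence with the corresponding fundamental weak quasi-symmetric function via Theorem~\ref{thm:Gamma(pi)=Fwcomp(pi)} to conclude
\begin{align*}
F_{\wcomp(\pi)}F_{\wcomp(\sigma)}=\sum_{\tau}C_{\pi,\sigma}^{\tau}F_{\wcomp(\tau)}.
\end{align*}

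There is no real obstacle: the corollary is a purely formal consequence of the two preceding results, and the only thing to check is that the linearity of $\Gamma$ used in the second step is exactly the convention $\Gamma(\sum_{\pi\in\Lambda}\pi)=\sum_{\pi\in\Lambda}\Gamma(\pi)$ introduced just after Eq.~\eqref{eq:Gamma(pi)weightposet}, which allows us to pull the scalar coefficients $C_{\pi,\sigma}^{\tau}$ out of $\Gamma$. The analogy with Malvenuto's \cite[Corollaire~4.10]{Mal94} makes this the expected parallel for $\HSym$ and $\RQSym$ of the corresponding product rule for fundamental quasi-symmetric functions derived from $\SSym$.
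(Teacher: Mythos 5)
Your proposal is correct and is exactly the paper's argument: the paper proves this corollary by the one-line observation that it ``follows immediately from Theorem~\ref{thm:Gamma(pi)=Fwcomp(pi)} and Proposition~\ref{prop:Gamma(sigma2mcdottau[m])}'', which is precisely the chain $F_{\wcomp(\pi)}F_{\wcomp(\sigma)}=\Gamma(\pi)\Gamma(\sigma)=\Gamma(\pi\overline{\star}_{-1}\sigma)=\sum_{\tau}C_{\pi,\sigma}^{\tau}\Gamma(\tau)=\sum_{\tau}C_{\pi,\sigma}^{\tau}F_{\wcomp(\tau)}$ that you spell out. Your extra care about the linear extension of $\Gamma$ to combinations of signed permutations is a reasonable (and harmless) elaboration of what the paper leaves implicit.
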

In abbreviated notation, we write $F_{\wcomp(\pi)}F_{\wcomp(\sigma)} =F_{\wcomp(\pi\overline{\star}_{-1}\sigma)}$.
\begin{proof}
This follows immediately from Theorem \ref{thm:Gamma(pi)=Fwcomp(pi)} and Proposition \ref{prop:Gamma(sigma2mcdottau[m])}.
\end{proof}
For example, taking $\lambda=-1$ in Eq.~\eqref{example12st21} gives
\begin{align*}
F_{1\varepsilon}F_{1\varepsilon}=F_{\wcomp(12^{-}\overline{\star}_{-1}21^{-})}=2F_{1\varepsilon1\varepsilon}+2F_{11\varepsilon^2}+2F_{2\varepsilon^2}-F_{11\varepsilon}-F_{2\varepsilon}.
\end{align*}

\begin{lemma}\label{lem:wcomppicdotnearcons}
Let $n$ be a positive integer and $\pi$ a signed permutation of $[n]$, and let $p\in[n]$.
\begin{enumerate}
\item\label{wcompconcwp2}If $\pi_p<0$ or $\pi_p>0$ with $p\in WD(\pi)$,
then
\begin{align*}
{\wcomp}(\pi)={\wcomp}(\st(\pi_1\cdots\pi_p))\cdot{\wcomp}(\st(\pi_{p+1}\cdots\pi_n));
\end{align*}

\item\label{wcompnearconcwp1} If $\pi_p>0$ with  $p\notin WD(\pi)$,
then
$$
{\wcomp}(\pi)={\wcomp}(\st(\pi_1\cdots\pi_p))\odot{\wcomp}(\st(\pi_{p+1}\cdots\pi_n)).
$$
\end{enumerate}
\end{lemma}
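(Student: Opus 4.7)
The plan is to analyze $p$ relative to the block decomposition
\[\pi=({\smallsmile}^{i_1},\pi_{i_1+1}\cdots\pi_{j_1},\ldots,{\smallsmile}^{i_k},\pi_{j_{k-1}+i_k+1}\cdots\pi_{j_k},{\smallsmile}^{i_{k+1}})\]
introduced before Eq.~\eqref{eq:lemwcomppi}, from which $\wcomp(\pi)$ is built. The key preliminary observation is that the standardization $\st$ preserves signs and preserves relative order within each sign class; consequently, the block decompositions of $\st(\pi_1\cdots\pi_p)$ and of $\st(\pi_{p+1}\cdots\pi_n)$ are obtained by restricting $\pi$'s block decomposition to the respective sets of positions, and the positive blocks produce the very same compositions under $\comp\circ\st$. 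The analysis then splits according to where position $p$ sits in the block structure.

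Three configurations realize the concatenation case (Part \textit{a}): (i) $\pi_p<0$, where a negative block $\smallsmile^{i_t}$ is cleanly cut into $\smallsmile^{\ell}\cdot\smallsmile^{i_t-\ell}$; (ii) $\pi_p>0$ with $\pi_{p+1}<0$, where the split lands exactly at the end of a complete positive block, followed by the start of a negative block; (iii) $p=n$ with $\pi_n>0$, where the right half is empty. In each case the block decomposition of $\pi$ visibly concatenates those of $\pi_1\cdots\pi_p$ and $\pi_{p+1}\cdots\pi_n$, so $\wcomp$ of the two halves concatenate to $\wcomp(\pi)$. The remaining situation, $\pi_p>0$ together with $\pi_{p+1}>0$, places $p$ strictly inside a common positive block $B=\pi_a\cdots\pi_b$. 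Writing $\comp(\st(B))=(c_1,\ldots,c_r)$, the relative position $p-a+1$ is a break point of this composition precisely when $\st(B)$ has a descent there, i.e.\ exactly when $\pi_p>\pi_{p+1}$, i.e.\ exactly when $p\in WD(\pi)$. In the descent subcase, $\comp(\st(\pi_a\cdots\pi_p))$ and $\comp(\st(\pi_{p+1}\cdots\pi_b))$ are the two consecutive halves of $(c_1,\ldots,c_r)$ whose concatenation recovers it, yielding Part \textit{a}. In the ascent subcase, $p-a+1$ falls strictly inside some part $c_i$, producing $\comp(\st(\pi_a\cdots\pi_p))=(c_1,\ldots,c_{i-1},c_i')$ and $\comp(\st(\pi_{p+1}\cdots\pi_b))=(c_i-c_i',c_{i+1},\ldots,c_r)$ with $0<c_i'<c_i$; the near concatenation $\odot$ reglues these to $(c_1,\ldots,c_r)$, which together with the untouched surrounding $\smallsmile^{i_t}$ blocks proves Part \textit{b}.

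The main obstacle is not conceptual but book-keeping: one must verify carefully that the $\varepsilon$-runs and the positive blocks in $\wcomp(\st(\pi_1\cdots\pi_p))$ and $\wcomp(\st(\pi_{p+1}\cdots\pi_n))$ line up with those in $\wcomp(\pi)$ after concatenation or near concatenation, and that the descent/break correspondence inside a positive block survives standardization. Both reduce to the observation that $\st$ restricts to an order-preserving bijection on each sign class, but require careful tracking of the indices $i_t,j_t$ to confirm the $\varepsilon$-length contributions on each side add to $i_t$ and that the composition splittings of each positive block recombine correctly.
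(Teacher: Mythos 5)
Your proposal is correct and follows essentially the same route as the paper's proof: decompose $\pi$ into its alternating negative runs and positive blocks, locate $p$ within that decomposition, and use the fact that a descent of $\st$ of a positive block corresponds exactly to a break point of its descent composition (so a split at a weak descent gives $\cdot$ and a split at an ascent inside a positive block gives $\odot$). The extra sub-cases you enumerate ($\pi_{p+1}<0$, $p=n$) are subsumed in the paper's treatment of ``$\pi_p>0$ with $p\in WD(\pi)$,'' so there is no substantive difference.
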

\begin{proof}
Let $\pi={\smallsmile}^{i_1}\pi_{i_1+1}\cdots \pi_{j_1}{\smallsmile}^{i_2}\pi_{j_1+i_2+1}\cdots \pi_{j_2}\cdots {\smallsmile}^{i_k}\pi_{j_{k-1}+i_k+1}\cdots \pi_{j_k}{\smallsmile}^{i_{k+1}}$ where
$i_p\in\mathbb{N}$, $p\in[k+1]$, $j_{q-1}+i_q+1\leq j_{q}$, and $\pi_{l}\in\PP$ for $j_{q-1}+i_q+1\leq l\leq j_{q}$, $q\in[k]$ with the notation $j_{0}=0$.
\smallskip

\noindent
\eqref{wcompconcwp2} If $\pi_p<0$, then by Eq.~\eqref{eq:lemwcomppi}, there exists $q\in[k]$ such that $j_{q-1}<p\leq j_{q-1}+i_q$, and hence
\begin{align*}
{\wcomp}(\pi)=&(\varepsilon^{i_1},{\rm{comp}}(\st(\pi_{i_1+1}\cdots \pi_{j_1})),\cdots,\varepsilon^{i_{q-1}},{\rm{comp}}(\st(\pi_{j_{q-2}+i_{q-1}+1}\cdots \pi_{j_{q-1}})),\varepsilon^{p-j_{q-1}})\\
   & \cdot(\varepsilon^{j_{q-1}+i_q-p},\cdots,\varepsilon^{i_k},{\rm{comp}}(\st(\pi_{j_{k-1}+i_k+1}\cdots \pi_{j_k})),\varepsilon^{i_{k+1}})\\
    =&{\wcomp}(\st(\pi_1\cdots\pi_p))\cdot{\wcomp}(\st(\pi_{p+1}\cdots\pi_n)).
\end{align*}
If $\pi_p>0$, then by Eq.~\eqref{eq:lemwcomppi}, there exists $q\in[k]$ such that $j_{q-1}+i_q<p\leq j_{q}$. It follows from Eq.~\eqref{eq:defn:descentofpi} that
$p\in WD(\pi)$ is equivalent to $p-j_{q-1}-i_q\in WD(\st(\pi_{j_{q-1}+i_q+1}\cdots \pi_{j_q}))$.
Thus if $\pi_p>0$ with $p\in WD(\pi)$, then we have
$$
{\rm{comp}}(\st(\pi_{j_{q-1}+i_q+1}\cdots \pi_{j_q}))={\rm{comp}}(\st(\pi_{j_{q-1}+i_q+1}\cdots \pi_{p}))\cdot{\rm{comp}}(\st(\pi_{p+1}\cdots \pi_{j_q}))
$$
so that ${\wcomp}(\pi)={\wcomp}(\st(\pi_1\cdots\pi_p))\cdot{\wcomp}(\st(\pi_{p+1}\cdots\pi_n))$, proving Item \eqref{wcompconcwp2}.
\smallskip

\noindent
\eqref{wcompnearconcwp1} If $\pi_p>0$ with $p\notin WD(\pi)$, then by the proof of Item~\eqref{wcompconcwp2}, $p-j_{q-1}-i_q\notin WD(\st(\pi_{j_{q-1}+i_q+1}\cdots \pi_{j_q}))$,
so $$
{\rm{comp}}(\st(\pi_{j_{q-1}+i_q+1}\cdots \pi_{j_q}))={\rm{comp}}(\st(\pi_{j_{q-1}+i_q+1}\cdots \pi_{p}))\odot{\rm{comp}}(\st(\pi_{p+1}\cdots \pi_{j_q}))
$$
and hence ${\wcomp}(\pi)={\wcomp}(\st(\pi_1\cdots\pi_p))\odot{\wcomp}(\st(\pi_{p+1}\cdots\pi_n))$, proving Item~\eqref{wcompnearconcwp1}.
\end{proof}

\begin{theorem}\label{HAHD2fHSymtRQSym}
The $\bfk$-linear map $\mathcal{D}_2:\HSym\rightarrow \RQSym$ induced by $\mathcal{D}_2(\pi)=F_{{\wcomp}(\pi)}$ is a Hopf algebra homomorphism.
\end{theorem}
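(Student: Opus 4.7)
The plan is to exhibit $\mathcal{D}_2$ as a bialgebra morphism and then invoke~\cite[Lemma~4.0.4]{Swe} to upgrade this to a Hopf algebra morphism, exactly as in the proof of Theorem~\ref{thm:phi2}. The work splits into an algebra check (essentially already done) and a coalgebra check (the real content).

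Algebra compatibility is immediate: $\mathcal{D}_2(\imath)=F_{\wcomp(\imath)}=F_\emptyset=1$ preserves the unit, Corollary~\ref{coro:productruleofweakfdqsf} gives $\mathcal{D}_2(\sigma)\mathcal{D}_2(\tau)=\mathcal{D}_2(\sigma\overline{\star}_{-1}\tau)$ directly, and the counits agree since $\wcomp(\pi)=\emptyset$ iff $\pi=\imath$, whence $\epsilon_R(\mathcal{D}_2(\pi))=\delta_{\pi,\imath}=\epsilon(\pi)$.

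For coalgebra compatibility I must verify that
\begin{align*}
(\mathcal{D}_2\otimes\mathcal{D}_2)\Delta(\pi)=\sum_{p=0}^{n}F_{\wcomp(\st(\pi_1\cdots\pi_p))}\otimes F_{\wcomp(\st(\pi_{p+1}\cdots\pi_n))}
\end{align*}
equals $\Delta_R(F_{\wcomp(\pi)})$ given by Eq.~\eqref{eq:antipodeforwqsymfundam}. The strategy is to match the $n+1$ summands on the left term-by-term with the summands on the right, using the dichotomy supplied by Lemma~\ref{lem:wcomppicdotnearcons}: each position $p$ produces either a concatenation factorization of $\wcomp(\pi)$ (when $p\in\{0,n\}$, or $\pi_p<0$, or $\pi_p>0$ with $p\in WD(\pi)$) or a near-concatenation factorization (when $\pi_p>0$ and $p\notin WD(\pi)$). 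I would then construct two explicit bijections: (a) the concatenation-producing positions biject with all $\ell(\wcomp(\pi))+1$ concatenation factorizations of $\wcomp(\pi)$, a cut of $\pi$ at position $p$ inducing the cut of $\wcomp(\pi)$ at the slot just after the $p$-th letter; and (b) the remaining positions biject with the near-concatenation factorizations, where a non-descent position $p$ appearing as the $u$-th letter inside a positive sub-run of length $\alpha_i$ corresponds to the split $\alpha_i=u+(\alpha_i-u)$ of the part $\alpha_i$ of $\wcomp(\pi)$.

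The main obstacle is bijection (b): since $\ell(\wcomp(\pi))$ is in general strictly less than $n$, there is no naive slot-by-slot correspondence between positions of $\pi$ and entries of $\wcomp(\pi)$. The remedy is a block-wise count inside each maximal positive run $\pi_a\cdots\pi_b$: the composition $\comp(\st(\pi_a\cdots\pi_b))$ of length $\ell$ records $\ell-1$ internal descents plus the trailing boundary position $b$, leaving exactly $(b-a+1)-\ell$ non-descent interior positions, matching the number $\sum_i(\alpha_i-1)$ of internal splits of the positive parts of $\wcomp(\pi)$ coming from this block. Summing these block-wise counts, adding the boundary cases $p\in\{0,n\}$, and comparing the explicit factorization forms produced by Lemma~\ref{lem:wcomppicdotnearcons} with the summands of $\Delta_R(F_{\wcomp(\pi)})$ completes the coalgebra check, and Swedler's lemma then closes the argument.
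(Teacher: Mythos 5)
Your proposal is correct and follows essentially the same route as the paper: the algebra part via the generating-function product rule (Theorem~\ref{thm:Gamma(pi)=Fwcomp(pi)} and Proposition~\ref{prop:Gamma(sigma2mcdottau[m])}), the coalgebra part by matching the $n+1$ cuts of $\pi$ against the concatenation and near-concatenation factorizations of $\wcomp(\pi)$ via Lemma~\ref{lem:wcomppicdotnearcons} and Eq.~\eqref{eq:antipodeforwqsymfundam}, and then Sweedler's lemma. The only difference is that you spell out the counting/bijection argument (distinguishing first factors by total weight and counting block-wise) that the paper leaves implicit, and that detail is correct.
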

\begin{proof}
By Theorem \ref{thm:Gamma(pi)=Fwcomp(pi)} and Proposition \ref{prop:Gamma(sigma2mcdottau[m])}, we have
\begin{align*}
 \mathcal{D}_2(\sigma\overline{\star}_{-1}\tau)=F_{{\wcomp}(\sigma\overline{\star}_{-1}\tau)}=\Gamma(\sigma\overline{\star}_{-1}\tau)
 =\Gamma(\sigma)\Gamma(\tau)=F_{{\wcomp}(\sigma)}F_{{\wcomp}(\tau)}
 = \mathcal{D}_2(\sigma)\mathcal{D}_2(\tau)
\end{align*}
for any signed permutations $\sigma$ and $\tau$. Thus $ \mathcal{D}_2$ is an algebra homomorphism. Next we show that $ \mathcal{D}_2$ is comultiplication-preserving.
Let $\pi$ be a signed permutation of $[n]$.  Clearly $\epsilon_R\mathcal{D}_2(\pi)=\epsilon(\pi)$ by Eqs.~\eqref{HSymepde} and \eqref{deepSant}. It follows from  Eq.~\eqref{eq:Delta(sigma)WG} that
\begin{align*}
\Delta(\pi)=\sum_{p=0}^n\st(\pi_1\cdots\pi_p)\otimes\st(\pi_{p+1}\cdots\pi_n),
\end{align*}
which together with Lemma \ref{lem:wcomppicdotnearcons} implies that
\begin{align*}
    (\mathcal{D}_2\otimes \mathcal{D}_2)\Delta(\pi)=&\sum_{p=0}^nF_{{\wcomp}(\st(\pi_1\cdots\pi_p))}\otimes F_{{\wcomp}(\st(\pi_{p+1}\cdots\pi_n))}\\
    =&\sum_{{\wcomp}(\pi)=\alpha\cdot\beta}F_\alpha\otimes F_\beta+\sum_{{\wcomp}(\pi)=\alpha\odot\beta}F_\alpha\otimes F_\beta.
\end{align*}
Thus, $$(\mathcal{D}_2\otimes \mathcal{D}_2)\Delta(\pi)=\Delta_R(F_{{\wcomp}(\pi)})=\Delta_R( \mathcal{D}_2(\pi))$$ by  Eq.~\eqref{eq:antipodeforwqsymfundam}.
Therefore, $\mathcal{D}_2$ is a bialgebra homomorphism and hence a Hopf algebra homomorphism by~\cite[Lemma 4.0.4]{Swe}.
\end{proof}

\noindent

{\bf Acknowledgements.}
This work was partially supported by the National Natural Science Foundation of China $($Grant No. 11771190, 11501467$)$ and the Natural Science Foundation of Chongqing  $($Grant No. cstc2019jcyj-msxmX0435$)$.

\end{document}